\documentclass[3p,12pt,preprint,sort&compress,reqno]{elsarticle}

\usepackage{amsmath,amsthm,amsfonts,amssymb,bm}
\usepackage{mathrsfs}
\usepackage{latexsym}

\usepackage{hyperref}
\usepackage{color}

\usepackage{indentfirst} 
\usepackage[titletoc]{appendix} 

\linespread{1.2} 
\setcounter{section}{0}
\setcounter{equation}{0}

\allowdisplaybreaks[4]
\numberwithin{equation}{section}
\def\R{{\mathbb R}}

\let\eps=\varepsilon

\newcommand{\p}{\partial}
\newcommand{\f}{\frac}
\newcommand{\Z}{\Bbb Z}
\newcommand{\cF}{\mathcal{F}}

\newcommand{\Forall}{{~\forall\,}}

\newcommand{\beq}{\begin{equation}}
\newcommand{\eeq}{\end{equation}}
\newcommand{\beqo}{\begin{equation*}}
\newcommand{\eeqo}{\end{equation*}}

\newcommand{\abs}[1]{\left\vert#1\right\vert}

\newcommand{\normm}[1]{\left\Vert#1\right\Vert}
\newcommand{\inner}[1]{\left(#1\right)}

\newtheorem{thm}{Theorem}[section]  
\newtheorem{mylem}{Lemma}[section]

\newtheorem{remark}{Remark}[section]

\makeatletter
\def\ps@pprintTitle{%
     \let\@oddhead\@empty
     \let\@evenhead\@empty
     \def\@oddfoot{\reset@font\hfil\thepage\hfil}
     \let\@evenfoot\@oddfoot}

  
\makeatother

\begin{document}

\begin{frontmatter} 
\title{On the hydrostatic approximation of 3D Oldroyd-B model}

\author[ad1]{Marius Paicu}
\ead{marius.paicu@math.u-bordeaux.fr}
  
\author[ad2]{Tianyuan Yu}
\ead{tianyuanyu@dlmu.edu.cn}

\author[ad3]{Ning Zhu\corref{cor1}}
\ead{ning.zhu@sdu.edu.cn, mathzhu1@163.com}

\cortext[cor1]{Corresponding author}

\address[ad1]{Universit\'e de Bordeaux, Institut de Math\'ematiques de Bordeaux, F-33405 Talence Cedex, France}

\address[ad2]{School of Science, Dalian Maritime University, Dalian, 116026, People's Republic of China}

\address[ad3]{School of Mathematics, Shandong University, Jinan, Shandong, 250100, People's Republic of China}

\begin{abstract}
In this paper, we study the hydrostatic approximation for the 3D Oldroyd-B model. Firstly, we derive the hydrostatic approximate system for this model and prove the global well-posedness of the limit system with small analytic initial data in horizontal variable. Then we justify the hydrostatic limit strictly from the re-scaled Oldroyd-B model to the hydrostatic Oldroyd-B model and obtain the precise convergence rate. 
\end{abstract}

\begin{keyword}
Hydrostatic Oldroyd-B model, Global well-posedness, Radius of analyticity.
\MSC[2020] 35Q35, 76A10, 76D03.
\end{keyword}
 
\end{frontmatter}

\section{Introduction}
We consider the Oldroyd-B model in a three-dimensional thin strip $\Omega^\eps=\R^2\times\eps\mathbb{T}$:
\beq\label{Oldroyd-B}
\left\{
\begin{split}
&\p_tU+U\cdot\nabla U-\nu\Delta U+\nabla P=\operatorname{div}\mathcal{T},\\
&\p_t\mathcal{T}+U\cdot\nabla\mathcal{T}+Q(\mathcal{T},\nabla U)+a\mathcal{T}=\mu_1\mathbb{D}(U),\\
&\nabla\cdot U=0,
\end{split}
\right.
\eeq
where $U(t,x,y)$ and $P(t,x,y)$ stand the velocity field and pressure of the fluid, respectively. $\mathcal{T}(t,x,y)$ stands for the non-Newtonian part of the stress tensor which is a symmetric matrix. The bilinear form $Q$ is determined by 
\[
Q(\mathcal{T},\nabla U)=b(\mathbb{D}(U)\mathcal{T}+\mathcal{T}\mathbb{D}(U))+\mathcal{T}\Omega(U)-\Omega(U)\mathcal{T}.
\]
Here the parameter $b\in[-1,1]$, $\mathbb{D}(U)$ and $\Omega(U)$ stand the symmetric part and skew-symmetric part of $\nabla U$, respectively. In other words, it holds that
\begin{align*}
\mathbb{D}(U)=\f12\left(\nabla U+(\nabla U)^{T}\right),\quad \Omega(U)=\f12\left(\nabla U-(\nabla U)^{T}\right).
\end{align*}
The parameters $\nu>0$, $\mu_1>0$ and $a>0$ are determined by 
\[
\nu=\frac{\theta}{\operatorname{\bf Re}},\quad a=\frac{1}{\operatorname{\bf We}}, \quad \mu_1=\frac{2(1-\theta)}{\operatorname{\bf We}\operatorname{\bf Re}},
\] 
where $\operatorname{\bf Re}$ and $\operatorname{\bf We}$ are the Reynolds and Weissenberg numbers of the fluid, respectively. Finally, $\theta$ is the ratio between the so-called relaxation and retardation times. 

The Oldroyd-B model was first introduced by Oldroyd in 1950 (cf.\cite{Oldroyd1950}) to describe a typical constitutive model that does not satisfy Newtonian laws. It has been widely studied in the past decades. When $b=0$, Lions--Masmoudi (cf.\cite{LionsMasmoudi2000}) established that the weak solution is global well-posedness. The case for $b\neq0$ is still open up to now. Guillop\'e--Saut (cf.\cite{GuillopeSaut1990}) and Hieber--Naito--Shibata (cf.\cite{HieberNaitoShibata2012}) showed the global well-posedness of strong solution for Oldroyd-B model in smooth bounded domain and exterior domain when the initial data and the coupling parameter $\theta$ are small sufficiently, respectively. Later, Molinet--Talhouk (cf.\cite{MolinetTalhouk2004}) and Hieber--Fang--Zi (cf.\cite{FangHieberZi2013}) removed the smallness assumption of the coupling parameter for smooth bounded domain and exterior domain, respectively. When the initial data is restricted in scaling invariant spaces, Chemin--Masmoudi (cf.\cite{CheminMasmoudi2001}) proved the local well-posedness results in critical Besov spaces (See also \cite{ChenMiao2008}). Moreover, they showed the solution is in fact global provided the initial data and the coupling parameter is small sufficiently. Later, Zi--Fang--Zhang (cf.\cite{ZiFangZhang2014}) removed the smallness assumption for the coupling parameter. In recent years, De Anna--Paicu (cf.\cite{DePaicu2020}) established the Fujita-Kato theory for some generalized Oldroyd-B model. Kessenich (cf.\cite{kessenich2009}) and Cai--Lei--Lin--Masmoudi (cf.\cite{cailei2019}) showed the vanishing viscosity limit results for 3D and 2D incompressible viscoelastic fluids, respectively. Recently, Zi (cf.\cite{Zi2021}) justified the vanishing viscosity limit when the coupling parameter $\theta\rightarrow0$ in the framework of analytic spaces. Other interesting results involving Oldroyd-B model can be found in \cite{BreschPrange2014,ConstantinKliegl2012,ElgindiLiu2015,EligndiRousset2015,Fangzi2016,FengWangWu,LeiLiuZhou2008,zhu2018,WWX,ZW} and the reference therein.


In this work, we prescribe the Oldroyd-B model \eqref{Oldroyd-B} the initial data
\[
U\big|_{t=0}=\left(u_{0}\left(x,\frac{y}{\eps}\right), \eps v_0\left(x,\frac{y}{\eps}\right)\right), \quad \mathcal{T}\big|_{t=0}=
\begin{pmatrix}
    \eps\tau_{11}^0\left(x,\dfrac{y}{\eps}\right) & \eps\tau_{12}^0\left(x,\dfrac{y}{\eps}\right) & \eps\tau_{13}^0\left(x,\dfrac{y}{\eps}\right) \\[6pt]
    \eps\tau_{21}^0\left(x,\dfrac{y}{\eps}\right) & \eps\tau_{22}^0\left(x,\dfrac{y}{\eps}\right) & \eps\tau_{23}^0\left(x,\dfrac{y}{\eps}\right) \\[6pt]
    \eps\tau_{31}^0\left(x,\dfrac{y}{\eps}\right) & \eps\tau_{32}^0\left(x,\dfrac{y}{\eps}\right) & \eps\tau_{33}^0\left(x,\dfrac{y}{\eps}\right)
\end{pmatrix},
\]
where $u_0^\varepsilon,v_0^\varepsilon$ stand the tangential and normal velocities and we also used $x,y$ to represent the horizontal and vertical variable, respectively. In this work, we impose the following assumptions:
\[
\operatorname{\bf Re}=\eps^{-2},\quad\operatorname{\bf We}=\eps,
\]
and the coupling parameter $0<\theta<1$ is a constant. Moreover, to guarantee the uniqueness of the equations and compute the pressure gradient, we assume that
\[
\int_{\eps\mathbb{T}}U_2(t,x,y)\mathrm{d}y=0.
\] 

Inspired by \cite{BL92, Lions96}, we write
\begin{align*}
U=(u^\eps,\eps v^\eps)\left(t,x,\frac{y}{\eps}\right), \quad P=p^\eps\left(t,x,\frac{y}{\eps}\right), \quad 
\end{align*}
and
\begin{align*}
\mathcal{T}=\begin{pmatrix}
 \eps\tau_{11}^\eps\left(t,x,\dfrac{y}{\eps}\right) & \eps\tau_{12}^\eps\left(t,x,\dfrac{y}{\eps}\right) & \eps\tau_{13}^\eps\left(t,x,\dfrac{y}{\eps}\right)\\[6pt]
 \eps\tau_{21}^\eps\left(t,x,\dfrac{y}{\eps}\right) & \eps\tau_{22}^\eps\left(t,x,\dfrac{y}{\eps}\right) & \eps\tau_{23}^\eps\left(t,x,\dfrac{y}{\eps}\right)\\[6pt]
 \eps\tau_{31}^\eps\left(t,x,\dfrac{y}{\eps}\right) & \eps\tau_{32}^\eps\left(t,x,\dfrac{y}{\eps}\right) & \eps\tau_{33}^\eps\left(t,x,\dfrac{y}{\eps}\right)
\end{pmatrix},
\end{align*}
where $u^\eps=(u_1^\eps,u_2^\eps)$ and $v^\eps$ stand the tangential and normal velocities, respectively. Then the velocity field $(u^\eps,v^\eps)$ satisfies the following equations in $\R^2\times\mathbb{T}$:
\beq\label{Oldroyd-B-Scaled1}
\left\{
\begin{split}
&\left(\p_t+u^\eps\cdot\nabla_x+v^\eps\p_y-\theta\Delta_{\eps}\right)u^\eps+\nabla_x p^\eps=\begin{pmatrix}
 \eps\p_{x_1}\tau_{11}^\eps+\eps\p_{x_2}\tau_{12}^\eps+\p_y\tau_{13}^\eps\\[6pt]
 \eps\p_{x_1}\tau_{21}^\eps+\eps\p_{x_2}\tau_{22}^\eps+\p_y\tau_{23}^\eps
\end{pmatrix},\\
&\eps^2\left(\p_t+u^\eps\cdot\nabla_x+v^\eps\p_y-\theta\Delta_{\eps}\right)v^\eps+\p_y p^\eps=\eps(\eps\p_{x_1}\tau_{31}^\eps+\eps\p_{x_2}\tau_{32}^\eps+\p_y\tau_{33}^\eps),\\
&\nabla_x\cdot u^\eps+\p_yv^\eps=0, \quad \int_{\mathbb{T}}v^\eps(t,x,y)\mathrm{d}y=0,
\end{split}
\right.
\eeq
where $\Delta_{\eps}=\eps^2\Delta_x+\p_y^2$ with $\Delta_x=\p_{x_1}^2+\p_{x_2}^2$. If $\tau=0$ in the above equation, then it reduces to the classical hydrostatic Navier-Stokes equations which have been widely applied to depict the flow of atmosphere and ocean, where the vertical scale is quite small compared to the horizontal one. Similar to the velocity field, we can get the equations for the stress tensor $(\tau_{ij}^\eps)$. Here due to the fact that $\tau_{ij}=\tau_{ji}$, we only give the equations of $\tau_{ij}$ for $i\leqslant j$. More precisely, we find the diagonal elements of stress tensor $(\tau_{ii}^\eps)$ $(1\leqslant i\leqslant 3)$ satisfies
\beq\label{Oldroyd-B-Scaled2}
\left\{
\begin{split}
&\eps\left(\p_t+u^\eps\cdot\nabla_x+v^\eps\p_y\right)\tau_{11}^\eps+\tau_{11}^\eps+\left[\eps\tau_{12}^\eps(\p_{x_1}u_2^\eps-\p_{x_2}u_1^\eps)-\tau_{13}^\eps(\p_yu_1^\eps-\eps^2\p_{x_1}v^\eps)\right]\\
&+b\left[2\eps\tau_{11}^\eps\p_{x_1}u_1^\eps+\eps\tau_{12}^\eps(\p_{x_1}u_2^\eps+\p_{x_2}u_1^\eps)+\tau_{13}^\eps(\p_yu_1^\eps+\eps^2\p_{x_1}v^\eps)\right]
=2(1-\theta)\eps\p_{x_1}u_1^\eps,\\
&\eps\left(\p_t+u^\eps\cdot\nabla_x+v^\eps\p_y\right)\tau_{22}^\eps+\tau_{22}^\eps+\left[\eps\tau_{12}^\eps(\p_{x_2}u_1^\eps-\p_{x_1}u_2^\eps)-\tau_{23}^\eps(\p_yu_2^\eps-\eps^2\p_{x_2}v^\eps)\right]\\
&+b\left[\eps\tau_{12}^\eps(\p_{x_2}u_1^\eps+\p_{x_1}u_2^\eps)+2\eps\tau_{22}^\eps\p_{x_2}u_2^\eps+\tau_{23}^\eps(\p_yu_2^\eps+\eps^2\p_{x_2}v^\eps)\right]
=2(1-\theta)\eps\p_{x_2}u_2^\eps,\\
&\eps\left(\p_t+u^\eps\cdot\nabla_x+v^\eps\p_y\right)\tau_{33}^\eps+\tau_{33}^\eps+\left[\tau_{13}^\eps(\p_yu_1^\eps-\eps^2\p_{x_1}v^\eps)+\tau_{23}^\eps(\p_yu_2^\eps-\eps^2\p_{x_2}v^\eps)\right]\\
&+b\left[\tau_{13}^\eps(\p_yu_1^\eps+\eps^2\p_{x_1}v^\eps)+\tau_{23}^\eps(\p_yu_2^\eps+\eps^2\p_{x_2}v^\eps)+2\eps\tau_{33}^\eps\p_yv^\eps\right]
=2(1-\theta)\eps\p_yv^\eps.
\end{split}
\right.
\eeq
The first and the second diagonal elements $(\tau_{ij}^\eps)$ $(1\leqslant i<j\leqslant 3)$ above the main diagonal elements satisfies
\beq\label{Oldroyd-B-Scaled3}
\small
\left\{
\begin{split}
&\eps\left(\p_t+u^\eps\cdot\nabla_x+v^\eps\p_y\right)\tau_{12}^\eps+\tau_{12}^\eps+\f12b\eps\left[2\tau_{12}^\eps(\p_{x_1}u_1^\eps+\p_{x_2}u_2^\eps)+(\tau_{11}^\eps+\tau_{22}^\eps)(\p_{x_1}u_2^\eps+\p_{x_2}u_1^\eps)\right] \\
&+\f12\eps(\tau_{11}^\eps-\tau_{22}^\eps)(\p_{x_2}u_1^\eps-\p_{x_1}u_2^\eps)-\f12\tau_{13}^\eps(\p_yu_2^\eps-\eps^2\p_{x_2}v^\eps)-\f12\tau_{32}^\eps(\p_yu_1^\eps-\eps^2\p_{x_1}v^\eps)\\
&+\f12b\left[\tau_{13}^\eps(\p_yu_2^\eps+\eps^2\p_{x_2}v^\eps)+\tau_{32}^\eps(\p_yu_1^\eps+\eps^2\p_{x_1}v^\eps)\right]=(1-\theta)\eps(\p_{x_1}u_2^\eps+\p_{x_2}u_1^\eps),\\
&\eps\left(\p_t+u^\eps\cdot\nabla_x+v^\eps\p_y\right)\tau_{13}^\eps+\tau_{13}^\eps+\f12b\eps\left[2\tau_{13}^\eps(\p_{x_1}u_1^\eps+\p_yv^\eps)+\tau_{23}^\eps(\p_{x_1}u_2^\eps+\p_{x_2}u_1^\eps)\right]\\
&+\f12(\tau_{11}^\eps-\tau_{33}^\eps)(\p_yu_1^\eps-\eps^2\p_{x_1}v^\eps)+\f12\tau_{12}^\eps(\p_yu_2^\eps-\eps^2\p_{x_2}v^\eps)-\f12\eps\tau_{23}^\eps(\p_{x_2}u_1^\eps-\p_{x_1}u_2^\eps)\\
&+\f12b\left[(\tau_{11}^\eps+\tau_{33}^\eps)(\p_yu_1^\eps+\eps^2\p_{x_1}v^\eps)+\tau_{12}^\eps(\p_yu_2^\eps+\eps^2\p_{x_2}v^\eps)\right]=(1-\theta)(\p_yu_1^\eps+\eps^2\p_{x_1}v^\eps),\\
&\eps\left(\p_t+u^\eps\cdot\nabla_x+v^\eps\p_y\right)\tau_{23}^\eps+\tau_{23}^\eps+\f12b\eps\left[2\tau_{23}^\eps(\p_{x_2}u_2^\eps+\p_yv^\eps)+\tau_{13}^\eps(\p_{x_1}u_2^\eps+\p_{x_2}u_1^\eps)\right]\\
&+\f12(\tau_{22}^\eps-\tau_{33}^\eps)(\p_yu_2^\eps-\eps^2\p_{x_2}v^\eps)+\f12\tau_{21}^\eps(\p_yu_1^\eps-\eps^2\p_{x_1}v^\eps)-\f12\eps\tau_{13}^\eps(\p_{x_1}u_2^\eps-\p_{x_2}u_1^\eps)\\
&+\f12b\left[(\tau_{22}^\eps+\tau_{33}^\eps)(\p_yu_2^\eps+\eps^2\p_{x_2}v^\eps)+\tau_{21}^\eps(\p_yu_1^\eps+\eps^2\p_{x_1}v^\eps)\right]=(1-\theta)(\p_{y}u_2^\eps+\eps^2\p_{x_2}v^\eps).
\end{split}
\right.
\eeq
Formally, if we take $\eps\rightarrow0$, then the equations \eqref{Oldroyd-B-Scaled1} is reduced to
\beq\label{Oldroyd-B-Prandtl1}
\left\{
\begin{split}
&\big(\p_t+u\cdot\nabla_x+v\p_y-\theta\p_y^2\big)u+\nabla_x p=\begin{pmatrix}
 \p_y\tau_{13}\\[6pt]
 \p_y\tau_{23}
\end{pmatrix},\\
&\p_y p=0,\\
&\nabla_x\cdot u+\p_yv=0, \quad \int_{\mathbb{T}}v(t,x,y)\mathrm{d}y=0.
\end{split}
\right.
\eeq
The equations \eqref{Oldroyd-B-Scaled2}--\eqref{Oldroyd-B-Scaled3} is reduced to
\beq\label{Oldroyd-B-Prandtl2}
\left\{
\begin{split}
&(b-1)\tau_{13}\p_yu_1+\tau_{11}=0,\\
&(b-1)\tau_{23}\p_yu_2+\tau_{22}=0,\\
&(b+1)(\tau_{13}\p_yu_1+\tau_{23}\p_yu_2)+\tau_{33}=0,\\
&(b-1)(\tau_{13}\p_yu_2+\tau_{23}\p_yu_1)+2\tau_{12}=0,\\
&b(\tau_{11}+\tau_{33})\p_yu_1+(\tau_{11}-\tau_{33})\p_yu_1+(b+1)\tau_{12}\p_yu_2+2\tau_{13}=2(1-\theta)\p_yu_1,\\
&b(\tau_{22}+\tau_{33})\p_yu_2+(\tau_{22}-\tau_{33})\p_yu_2+(b+1)\tau_{21}\p_yu_1+2\tau_{23}=2(1-\theta)\p_yu_2.
\end{split}
\right.
\eeq
By virtue of \eqref{Oldroyd-B-Prandtl2}, we have that
\beq\label{tau13+tau23}
\begin{split}
&\tau_{23}=\frac{(1-\theta)\p_yu_2}{1+(1-b^2)((\p_yu_1)^2+(\p_yu_2)^2)},\\
&\tau_{13}=\frac{(1-\theta)\p_yu_1}{1+(1-b^2)((\p_yu_1)^2+(\p_yu_2)^2)}.
\end{split}
\eeq
The detail derivation of \eqref{tau13+tau23} can be found in the \ref{Appendix A}. Thus denote by $\sigma=1-b^2$, the equations \eqref{Oldroyd-B-Prandtl1} is converted to the following:
\beq\label{Oldroyd-B-Prandtl}
\left\{
\begin{split}
&\big(\p_t+u\cdot\nabla_x+v\p_y-\theta\p_y^2\big)u+\nabla_x p=(1-\theta)\p_y\begin{pmatrix}
 \dfrac{\p_yu_1}{1+\sigma((\p_yu_1)^2+(\p_yu_2)^2)}\\[6pt]
 \dfrac{\p_yu_2}{1+\sigma((\p_yu_1)^2+(\p_yu_2)^2)}
\end{pmatrix},\\
&\p_y p=0,\\
&\nabla_x\cdot u+\p_yv=0, \quad \int_{\mathbb{T}}v(t,x,y)\mathrm{d}y=0.
\end{split}
\right.
\eeq

The goal of this paper is to establish the global well-posedness of the equations \eqref{Oldroyd-B-Prandtl} with small analytic-in-$x$ initial data and then justify the limit from the equations \eqref{Oldroyd-B-Scaled1}--\eqref{Oldroyd-B-Scaled3} to the equations \eqref{Oldroyd-B-Prandtl1}--\eqref{Oldroyd-B-Prandtl2} as $\eps\rightarrow0$.

Now we state our main results. The first result is the global well-posedness of the hydrostatic Oldroyd-B equations \eqref{Oldroyd-B-Prandtl}.
\begin{thm}\label{th1.1}
Let $a>0$ and $s_1>\f32,s_2>\f12$. Then there exists a constant $c_1>0$ sufficiently small such that the following conclusion holds. If the initial data $u_0$ in \eqref{Oldroyd-B-Prandtl} satisfies
\beq\label{initial}
\normm{e^{a\langle D_x\rangle}u_0}_{H^{s_1,s_2}}+\normm{e^{a\langle D_x\rangle}\p_yu_0}_{H^{s_1,s_2}}\leqslant c_1a
\eeq
and the compatibility condition 
\beq\label{comp++1}
\int_{\mathbb{T}}u_0(x,y)\mathrm{d}y=0,
\eeq
then the hydrostatic Oldroyd-B equations \eqref{Oldroyd-B-Prandtl} admit a unique global-in-time solution $u$ satisfying that
\begin{multline}\label{energy inequality}
\normm{e^{\frak{K}t}e^{\frac{a}{2} \langle D_x \rangle}u}_{L_t^\infty H^{s_1,s_2}}+\normm{e^{\frak{K}t}e^{\frac{a}{2} \langle D_x \rangle}\p_yu}_{L_t^\infty H^{s_1,s_2}}
+\normm{e^{\frak{K}t}e^{\frac{a}{2} \langle D_x \rangle}\p_yu}_{L_t^2H^{s_1,s_2}}\\
+\normm{e^{\frak{K}t}e^{\frac{a}{2} \langle D_x \rangle}\p_y^2u}_{L_t^2H^{s_1,s_2}}\leqslant 100\left(\normm{e^{a\langle D_x \rangle}u_0}_{H^{s_1,s_2}}+\normm{e^{a\langle D_x \rangle}\p_yu_0}_{H^{s_1,s_2}}\right),
\end{multline}
where $e^{\frac{a}{2} \langle D_x \rangle}$ is a Fourier multiplier with symbol $e^{\frac{a}{2}(1+|\xi|)}$ and the constant $\frak{K}$ is determined by the Poinca\'re inequality on strip $\Omega$ (see \eqref{Poincare}). 
\end{thm}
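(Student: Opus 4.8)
I would run an analytic‑energy argument of Chemin type in an anisotropic Sobolev space, treating the Oldroyd‑B stress as a cubic perturbation of a vertical heat operator. First reduce \eqref{Oldroyd-B-Prandtl} to a closed equation for $u$: from $\p_yp=0$, $p=p(t,x)$, and integrating the first line over $\mathbb{T}$ with $\nabla_x\cdot u+\p_yv=0$, $\int_{\mathbb T}v\,\mathrm dy=0$ and $y$‑periodicity recovers $\nabla_xp$ explicitly in terms of $\int_{\mathbb T}u\otimes u\,\mathrm dy$; in particular $\p_t\!\int_{\mathbb T}u\,\mathrm dy=0$, so \eqref{comp++1} propagates and the Poincar\'e inequality \eqref{Poincare} applies to $u(t,\cdot)$, supplying the factor $e^{\frak{K}t}$. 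Writing $\frac{z}{1+\sigma|z|^2}=z+G(z)$ with $G\colon\R^2\to\R^2$ real‑analytic near $0$ and $G(z)=O(|z|^3)$, the stress forcing absorbs $-\theta\p_y^2u$ into a unit‑coefficient vertical Laplacian:
\beq\label{plan-reform}
\big(\p_t+u\cdot\nabla_x+v\p_y-\p_y^2\big)u+\nabla_xp=(1-\theta)\,\p_y\big(G(\p_yu)\big),\qquad v=-\!\int_0^y\nabla_x\cdot u\,\mathrm dy'.
\eeq

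\noindent\textbf{Weighted energy estimate.} Fix $\lambda>0$ and let $\vartheta(t)$ solve $\vartheta(0)=0$, $\dot\vartheta(t)=\normm{\Theta(t)\p_yu(t)}_{H^{s_1,s_2}}$ (measured without the time‑exponential weight), where $\Theta(t)$ is the Fourier multiplier with symbol $e^{(a-\lambda\vartheta(t))\langle\xi\rangle}$; while $\lambda\vartheta(t)\le\f a2$ this dominates $e^{\f a2\langle D_x\rangle}$. Put $E(t)=\normm{e^{\frak{K}t}\Theta u}_{H^{s_1,s_2}}^2+\normm{e^{\frak{K}t}\Theta\p_yu}_{H^{s_1,s_2}}^2$ and $\mathcal D(t)=\normm{e^{\frak{K}t}\Theta\p_yu}_{H^{s_1,s_2}}^2+\normm{e^{\frak{K}t}\Theta\p_y^2u}_{H^{s_1,s_2}}^2$. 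Applying $\Theta$ and $\Theta\p_y$ to \eqref{plan-reform} and pairing with $e^{2\frak{K}t}\Theta u$, resp. $e^{2\frak{K}t}\Theta\p_yu$, in $H^{s_1,s_2}$: the vertical Laplacian yields the dissipation $\mathcal D$; differentiating the symbol produces, with favourable sign, the gain $\lambda\dot\vartheta\big(\normm{e^{\frak{K}t}\Theta u}_{H^{s_1+\f12,s_2}}^2+\normm{e^{\frak{K}t}\Theta\p_yu}_{H^{s_1+\f12,s_2}}^2\big)$; and $\frak{K}$ with Poincar\'e preserves the exponential weight. The remaining terms: $u\cdot\nabla_xu$ and its $\p_y$‑derivative by anisotropic product laws (using $s_1>\f32$, $s_2>\f12$ so that the relevant spaces are algebras, and $\langle\xi\rangle\le\langle\xi-\eta\rangle+\langle\eta\rangle$ to split $\Theta$); the genuinely hydrostatic terms $v\p_yu,\ v\p_y^2u,\ \p_yv\,\p_yu$, where $v$ carries one extra horizontal derivative, by integrating by parts in $y$ at top order and the standard Paicu--Zhang frequency decomposition, which shifts the leftover loss onto the analytic weight so that the output is exactly $\dot\vartheta\cdot(\text{gain norms})$; $\nabla_xp$, which is quadratic in $u$ with one extra horizontal but no vertical derivative, like the above; and the cubic error $(1-\theta)\p_y(G(\p_yu))$ (and its $\p_y$‑derivative) by a composition estimate in anisotropic analytic spaces, bounded by $\lesssim\normm{e^{\frak{K}t}\Theta\p_yu}_{H^{s_1,s_2}}^2\,\mathcal D(t)^{1/2}$. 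Choosing $\lambda$ large enough to swallow the hydrostatic constants gives
\[
\f{\mathrm d}{\mathrm dt}E(t)+c\,\mathcal D(t)\le C\big(\sqrt{E(t)}+E(t)\big)\mathcal D(t).
\]

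\noindent\textbf{Continuation, existence, uniqueness.} With $M_0:=\normm{e^{a\langle D_x\rangle}u_0}_{H^{s_1,s_2}}+\normm{e^{a\langle D_x\rangle}\p_yu_0}_{H^{s_1,s_2}}\le c_1a$, let $T^*$ be maximal with $\lambda\vartheta\le\f a2$ and $E\le(100M_0)^2$ on $[0,T^*]$. There $C(\sqrt E+E)\le\f c2$ once $c_1$ is small, so $E(t)+\f c2\!\int_0^t\mathcal D\le E(0)\le M_0^2$; since $\dot\vartheta$ is unweighted, $\lambda\int_0^t\dot\vartheta\le\lambda\big(\!\int_0^\infty e^{-2\frak{K}\tau}\mathrm d\tau\big)^{\f12}\big(\!\int_0^\infty\mathcal D\big)^{\f12}\lesssim\lambda\,\frak{K}^{-\f12}M_0\le\f a4$ for $c_1$ small. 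Both defining inequalities are strictly improved, hence $T^*=\infty$, $\Theta(t)$ dominates $e^{\f a2\langle D_x\rangle}$ for all $t$, and integrating yields \eqref{energy inequality} (the $L^2_t$ terms from $\int_0^\infty\mathcal D$). Existence follows by running these uniform bounds on a Friedrichs‑regularized system that preserves horizontal analyticity and the zero vertical mean, then passing to the limit; uniqueness by the same energy method on the difference of two solutions in a space with slightly reduced analyticity radius.

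\noindent\textbf{Main obstacle.} The hardest point will be handling the hydrostatic term $v\p_yu$ (and $v\p_y^2u$): closing the estimate despite the one‑derivative horizontal loss in $v=-\int_0^y\nabla_x\cdot u$ is exactly what forces the decreasing analyticity radius and the above choice of $\dot\vartheta$, and requires propagating $u$, $\p_yu$ and $\p_y^2u$ together. The quasilinear, real‑analytic stress nonlinearity $z\mapsto z/(1+\sigma|z|^2)$ is a secondary, essentially technical point — it calls for a composition estimate in anisotropic analytic spaces — but, entering only at cubic order, it is harmless in the small‑data regime and does not interact with the loss mechanism.
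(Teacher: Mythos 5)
Your proposal follows essentially the same route as the paper: rewrite the stress forcing so that $-\theta\p_y^2u$ plus the linear part of $(1-\theta)\p_y(\p_yu/(1+\sigma|\p_yu|^2))$ become a unit-coefficient vertical Laplacian with a cubic remainder, propagate $u$ and $\p_yu$ in an analytic-in-$x$ anisotropic Sobolev norm with a time-decreasing radius, use the Poincar\'e inequality in $y$ for the $e^{\frak{K}t}$ weight, and close by a bootstrap; this is exactly the paper's argument.

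One concrete defect: you take $\dot\vartheta(t)=\normm{\Theta\p_yu}_{H^{s_1,s_2}}$ only, whereas the paper's loss rate is $\dot\eta=\normm{\p_yu_\Psi}_{H^{s_1,s_2}}+\normm{\p_y^2u_\Psi}_{H^{s_1,s_2}}$. The second summand is not cosmetic: in the equation for $\p_yu$ the hydrostatic term $v\p_y^2u$ (and $\p_yv\,\p_yu$) produces, after the product law and $v=-\int_0^y\nabla_x\cdot u$, contributions of the form $\normm{\p_y^2u_\Psi}_{H^{s_1,s_2}}\normm{u_\Psi}_{H^{s_1+\f12,s_2}}\normm{\p_yu_\Psi}_{H^{s_1+\f12,s_2}}$, which can only be absorbed by the Chemin-type gain $\lambda\dot\vartheta\,(\normm{u_\Psi}_{H^{s_1+\f12,s_2}}^2+\normm{\p_yu_\Psi}_{H^{s_1+\f12,s_2}}^2)$ if $\dot\vartheta\gtrsim\normm{\p_y^2u_\Psi}_{H^{s_1,s_2}}$; with your choice the prefactor $\normm{\p_y^2u_\Psi}$ is not controlled pointwise in time by anything in $E(t)$. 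The fix is simply to add $\normm{\Theta\p_y^2u}_{H^{s_1,s_2}}$ to $\dot\vartheta$; the continuation step still works because $\int_0^\infty\normm{\p_y^2u_\Psi}\,\mathrm dt'$ is controlled by Cauchy--Schwarz against $e^{-2\frak{K}t'}$ and the dissipation, exactly as for the first-derivative term.
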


\begin{remark}
We explain the compatibility condition in \eqref{comp++1}. By integrating $\partial_xu+\partial_yv=0$ over $\mathbb{T}$ with respect to vertical variable,
\beqo
\partial_x\int_\mathbb{T}u(t,x,y)\mathrm{d}y=0,
\eeqo
which together with the fact that: $u(t,x,y)\rightarrow0$ as $\abs{x}\rightarrow+\infty$, ensures that 
\beqo
\int_\mathbb{T} u(t,x,y)\mathrm{d}y=0.
\eeqo
\end{remark}

The second result is the justification for the hydrostatic limit from the equations \eqref{Oldroyd-B-Scaled1}--\eqref{Oldroyd-B-Scaled3} to the equations \eqref{Oldroyd-B-Prandtl1}--\eqref{Oldroyd-B-Prandtl2} as $\eps\rightarrow0$. 
\begin{thm}\label{th1.2}
Given $a>0$ and $s_1>\f52,s_2>\f32$. Suppose that the assumptions in Theorem \ref{th1.1} holds. Let $(u^\eps,v^\eps,p^\eps)$ and $(\tau_{ij}^\eps)$ $(1\leqslant i\leqslant j\leqslant 3)$ be the smooth solution of re-scaled Oldroyd-B equations \eqref{Oldroyd-B-Scaled1}--\eqref{Oldroyd-B-Scaled3} satisfying that
\begin{align}\label{assump-eps}
&\normm{e^{\frac{a}{2} \langle D_x \rangle}(u^\eps,\eps v^\eps)}_{L_t^\infty H^{s_1,s_2}}+\normm{e^{\frac{a}{2} \langle D_x \rangle}\sqrt{\eps}(\tau_{ij}^\eps)}_{L_t^\infty H^{s_1,s_2}}
+\normm{e^{\frac{a}{2} \langle D_x \rangle}(\eps\nabla_x,\p_y)(u^\eps,\eps v^\eps)}_{L_t^2H^{s_1,s_2}} \nonumber\\
&+\normm{e^{\frac{a}{2} \langle D_x \rangle}(\eps\nabla_x,\p_y)(u^\eps,\eps v^\eps)}_{L_t^1H^{s_1,s_2}}+\normm{e^{\frac{a}{2} \langle D_x \rangle}(\tau_{ij}^\eps)}_{L_t^2H^{s_1,s_2}}\leqslant 100c_1a,
\end{align}
Then it holds that
\begin{align}\label{hydrostaticlimit}
\normm{e^{\frac{a}{4} \langle D_x \rangle}(u^R,\eps v^R)}_{L_t^\infty H^{s_1-1,s_2-1}}+\normm{e^{\frac{a}{2} \langle D_x \rangle}\sqrt{\eps}(\tau_{ij}^R)}_{L_t^\infty H^{s_1-1,s_2-1}}\leqslant C\eps,	
\end{align}
where $(u^R,v^R)=(u^\eps-u,v^\eps-v), \tau_{ij}^R=\tau_{ij}^\eps-\tau_{ij}$ and the constant $C$ is independent of $\eps$.
\end{thm}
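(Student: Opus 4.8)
The plan is to run a stability-and-consistency estimate for the remainder $(u^R,v^R,p^R,\tau_{ij}^R)$ in the analytic-in-$x$ Sobolev framework of Theorem~\ref{th1.1}, but with a time-dependent radius of analyticity $\varphi(t)$ decreasing from $\f a2$ and remaining $\geqslant\f a4$, closed by a continuity argument. \textbf{Step 1 (the remainder system).} Subtracting \eqref{Oldroyd-B-Prandtl1}--\eqref{Oldroyd-B-Prandtl2} from \eqref{Oldroyd-B-Scaled1}--\eqref{Oldroyd-B-Scaled3} one gets, for the tangential velocity,
\[
\big(\p_t+u^\eps\cdot\nabla_x+v^\eps\p_y-\theta\Delta_\eps\big)u^R+u^R\cdot\nabla_x u+v^R\p_y u+\nabla_x p^R=\begin{pmatrix}\p_y\tau_{13}^R\\ \p_y\tau_{23}^R\end{pmatrix}+\eps\,\mathcal R_u,
\]
where $-\theta\Delta_\eps u^R$ is kept on the left (it produces the favorable $\eps^2$-dissipation $\eps^2\norm{\nabla_x u^R}^2$ together with $\theta\norm{\p_y u^R}^2$) and $\mathcal R_u$ gathers the genuine consistency source (the $\eps\Delta_x u$ piece, the $\eps\p_{x_i}\tau_{ij}^\eps$ contributions, etc.), bounded in $H^{s_1-1,s_2-1}$ uniformly in $\eps$ by \eqref{assump-eps}--\eqref{energy inequality}; this is exactly where the one-derivative-higher hypotheses $s_1>\f52,s_2>\f32$ are needed. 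The relation $\nabla_x\cdot u^R+\p_y v^R=0$ with $\int_\mathbb T v^R\,\mathrm dy=0$ gives $v^R(\cdot,y)=-\int_0^y\nabla_x\cdot u^R$, so $v^R$ costs one horizontal derivative on $u^R$; and $\nabla_x p^R$ is recovered explicitly by averaging the momentum equation over $\mathbb T$ (using $\int_\mathbb T u^R\,\mathrm dy=0$, a consequence of \eqref{comp++1} and the divergence-free relation), hence it is merely a quadratic functional of the unknowns and needs no separate treatment. Subtracting the stress equations gives, for each pair $(i,j)$,
\[
\eps\big(\p_t+u^\eps\cdot\nabla_x+v^\eps\p_y\big)\tau_{ij}^R+\tau_{ij}^R+\mathcal Q_{ij}^R=\eps\,\mathcal S_{ij},
\]
with $\mathcal Q_{ij}^R$ the linearised difference of the quadratic $Q$-terms — each summand a product of a remainder factor with a coefficient that is either $O(c_1a)$-small by \eqref{assump-eps}--\eqref{energy inequality} or a bounded limit quantity — and $\mathcal S_{ij}$ gathering $\eps\p_t\tau_{ij}^\eps$, $\eps^2\p_{x_i}v^\eps$-type terms and the transport of the limit stress, the time derivative being traded for spatial ones through the $\eps$-equation itself. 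Taking well-prepared data so that $u^R|_{t=0}=0$ and $\sqrt\eps\,\tau_{ij}^R|_{t=0}=O(\eps)$, the remainder energy starts at $O(\eps^2)$.

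\textbf{Step 2 (weighted energy with shrinking radius).} Measure the remainder by
\[
\mathcal E_R(t)=\normm{e^{\varphi(t)\langle D_x\rangle}u^R}_{H^{s_1-1,s_2-1}}^2+\eps\sum_{i\leqslant j}\normm{e^{\varphi(t)\langle D_x\rangle}\tau_{ij}^R}_{H^{s_1-1,s_2-1}}^2,
\]
with $\varphi(0)=\f a2$ and $\dot\varphi(t)=-\delta\,N(t)$, $N(t)$ a norm dominating the transport coefficients so that $\varphi$ stays $\geqslant\f a4$ as long as the a priori bounds hold. Applying $e^{\varphi(t)\langle D_x\rangle}$ to the remainder equations and pairing in $H^{s_1-1,s_2-1}$, the dissipation produces $\theta\norm{e^{\varphi\langle D_x\rangle}\p_y u^R}_{H^{s_1-1,s_2-1}}^2$ (plus the $\eps^2$-piece from $\Delta_\eps$), the damping produces $\sum_{i\leqslant j}\norm{e^{\varphi\langle D_x\rangle}\tau_{ij}^R}_{H^{s_1-1,s_2-1}}^2$, and the radius loss produces $(-\dot\varphi)\norm{|D_x|^{1/2}e^{\varphi\langle D_x\rangle}u^R}_{H^{s_1-1,s_2-1}}^2$; call the sum of these good terms $\mathcal D_R$. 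It is the last of them that absorbs the derivative-losing contributions $v^R\p_y u$ and $v^\eps\p_y u^R$: in frequency one splits each product so that both factors carry a $|D_x|^{1/2}e^{\varphi\langle D_x\rangle}$, leaving a low-frequency factor bounded by an analytic Sobolev norm of the limit solution, finite by \eqref{energy inequality}. The stress remainder only ever sees $\eps\nabla_x$ (tame), so it needs no shrinking radius — matching the statement, where $\sqrt\eps\,\tau_{ij}^R$ keeps the full weight $e^{\frac a2\langle D_x\rangle}$ while $(u^R,\eps v^R)$ is controlled only with $e^{\frac a4\langle D_x\rangle}$; the factor $\eps v^R$ is recovered from $\eps v^R=-\eps\int_0^y\nabla_x\cdot u^R$, trading the $\nabla_x$ against the slack between $\f a4$ and the running radius.

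\textbf{Step 3 (closing the estimate).} The remaining terms — $u^\eps\cdot\nabla_x u^R$, $u^R\cdot\nabla_x u$, the pressure functional, the coupling $\p_y\tau_{ij}^R$ in the momentum equation, and $\mathcal Q_{ij}^R$ — are handled by product laws in $H^{s,s'}$ with $s>\f32,s'>\f12$ (available since $s_1-1>\f32,s_2-1>\f12$), each contribution carrying either an $O(c_1a)$ factor or a remainder factor absorbed into $\mathcal D_R$; the consistency terms $\eps\mathcal R_u,\eps\mathcal S_{ij}$ give $O(\eps)$ via \eqref{assump-eps}--\eqref{energy inequality}. One arrives at a differential inequality of the schematic form
\[
\f{d}{dt}\mathcal E_R+\mathcal D_R\leqslant Cc_1a\,\mathcal D_R+C\eps\big(\eps+\mathcal D_R^{1/2}\big),
\]
so that, for $c_1$ small, $\f{d}{dt}\mathcal E_R+\f12\mathcal D_R\leqslant C\eps^2$. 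A continuity argument on the maximal interval on which $\mathcal E_R\leqslant K^2\eps^2$ and $\varphi\geqslant\f a4$ then closes: the a priori bound self-improves, the accumulated radius loss stays below $\f a4$, the interval is the whole time axis, and integration gives $\mathcal E_R(t)+\int_0^t\mathcal D_R\leqslant C\eps^2$, which is precisely \eqref{hydrostaticlimit} (the $L^2_t$ controls on $\p_y u^R$ and $\tau_{ij}^R$ come along for free inside $\mathcal D_R$).

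\textbf{Main obstacle.} The genuine difficulty is the one-horizontal-derivative loss in the transport terms $v^R\p_y(\cdot)$ and $v^\eps\p_y(\cdot)$: no smoothing in $x$ is available, so closing the estimate forces the shrinking-radius mechanism and a careful split of the weight $|D_x|^{1/2}e^{\varphi\langle D_x\rangle}$ between the two factors, with the low-frequency residue controlled uniformly in $\eps$ by \eqref{energy inequality} — this is what mandates the drop of radius from $\f a2$ to $\f a4$. The two supporting difficulties are: obtaining honestly $\eps$-uniform $O(\eps)$ bounds on the consistency terms, which requires eliminating $\eps^2\p_t v^\eps$ and $\eps\p_t\tau_{ij}^\eps$ through the $\eps$-system (hence the one-derivative-higher hypotheses and the $\sqrt\eps$ weight on $\tau^R$); and closing the $\tau^R$-estimate with no loss of analyticity, which relies on exploiting the $+\tau_{ij}^R$ damping jointly with the smallness of every coefficient appearing in $\mathcal Q_{ij}^R$.
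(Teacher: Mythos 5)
Your proposal follows essentially the same route as the paper: write the remainder system, measure it in an analytic-in-$x$ anisotropic Sobolev norm whose radius decreases in time at a rate dictated by the transport coefficients (the paper's phase $\Phi=\f13(a-\widetilde\lambda\zeta(t))(1+|\xi|)$, kept above $\f14a(1+|\xi|)$ thanks to the $L^1_t$ part of \eqref{assump-eps}), absorb the derivative-losing terms $v^R\p_yu$ and $v^\eps\p_yu^R$ into the radius-loss term, bound the consistency sources by $O(\eps)$ after estimating $\p_tu$ and $\p_t\tau_{ij}$ (the paper's Lemma~\ref{time-tau}), and close by a bootstrap. The architecture, the role of the hypotheses $s_1>\f52$, $s_2>\f32$, and the use of the product law and of the Poincar\'e inequality in $y$ all match.

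One step of your Step 3, taken literally, would not close. You list ``the coupling $\p_y\tau_{ij}^R$ in the momentum equation'' among terms handled by product laws and ``absorbed into $\mathcal D_R$'', and your schematic inequality files every such contribution under $Cc_1a\,\mathcal D_R$. But this coupling carries no small factor: after integration by parts it is $-\inner{\tau_{13}^R,\p_yu_1^R}$, and Young's inequality alone cannot fit $\normm{\tau^R}\normm{\p_yu^R}$ under $\theta\normm{\p_yu^R}^2+\normm{\tau^R}^2$ once $\theta\leqslant\f14$ (one would need $\f{1}{2\lambda}<1$ and $\f{\lambda}{2}<\theta$ simultaneously), and there are two such terms. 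The estimate closes only because this pairing cancels against the source $(1-\theta)\p_yu_1^R$ in the $\tau_{13}^R$-equation: the two combine, after integration by parts, into $-\theta\inner{\p_yu_1^R,\tau_{13}^R}$, which is then dominated by $\f12\theta\normm{\p_yu^R}^2+\f12\normm{\tau_{13}^R}^2$ --- this is exactly the paper's term $K_{16}$ and estimate \eqref{K16}, and it is the same structural mechanism that makes the limit system \eqref{Oldroyd-B-Prandtl} parabolic. You should make this cancellation explicit. A secondary imprecision: the time derivatives that must be eliminated are those of the \emph{limit} quantities ($\eps\p_t\tau_{ij}$, $\eps^2\p_tv$), which the paper bounds through the limit system and the algebraic formulas \eqref{tau13+tau23}, not ``through the $\eps$-system'' as you write; the terms $\eps\p_t\tau_{ij}^\eps$ and $\eps^2\p_tv^\eps$ stay on the left as the time derivative of the remainder energy.
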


\begin{remark}
We remark that the initial condition for $\tau$ in the equations \eqref{Oldroyd-B-Prandtl1}--\eqref{Oldroyd-B-Prandtl2} is determined by the velocity field. Indeed, denote by $u_0=(u_0^1,u_0^2)$, then taking $t=0$ in \eqref{tau13+tau23}, we find that
\beq\label{tau13+tau23-initial}
\begin{split}
&\tau_{23}|_{t=0}=\frac{(1-\theta)\p_yu_0^2}{1+(1-b^2)((\p_yu_0^1)^2+(\p_yu_0^2)^2)},\\
&\tau_{13}|_{t=0}=\frac{(1-\theta)\p_yu_0^1}{1+(1-b^2)((\p_yu_0^1)^2+(\p_yu_0^2)^2)}.
\end{split}
\eeq	
Then by virtue of \eqref{Oldroyd-B-Prandtl2}$_1$--\eqref{Oldroyd-B-Prandtl2}$_4$, we have that 
\beq\label{tau11+tau22+tau33+tau12-initial}
\begin{split}
&\tau_{11}|_{t=0}=-(b-1)\tau_{13}|_{t=0}\p_yu_0^1,\\
&\tau_{22}|_{t=0}=-(b-1)\tau_{23}|_{t=0}\p_yu_0^2,\\
&\tau_{33}|_{t=0}=-(b+1)\left(\tau_{13}|_{t=0}\p_yu_0^1+\tau_{23}|_{t=0}\p_yu_0^2\right),\\
&\tau_{12}|_{t=0}=-\f12(b-1)\left(\tau_{13}|_{t=0}\p_yu_0^2+\tau_{23}|_{t=0}\p_yu_0^1\right).
\end{split}
\eeq
Here for the sake of a short presentation, we assume that $\tau_{ij}^\eps$ and $\tau_{ij}$ share the same initial data
\[
\tau_{ij}^0=\tau_{ij}|_{t=0},\quad 1\leqslant i,j\leqslant3.
\]
\end{remark}

We end this introduction with some notations which will be used throughout our paper. We denote by $L^q_{x}L^r_{y}$ the anisotropic Lebesgue space $L^q(\R^2;L^r(\mathbb{T}))$ and $H^{s_1,s_2}(\Omega)$ the standard anisotropic Sobolev spaces. Given $1\leqslant p\leqslant\infty$. Assume that $f(t)\in L^1_{\mathrm{loc}}(\R_+)$ be a nonnegative function, we define the time-space Bochner norm with weight $f(t)$ as follows:
\begin{align}
\normm{a}_{{L}^p_{t,f}(H^{s_1,s_2})}:=\left(\int_0^t f(t')\normm{a(t')}_{H^{s_1,s_2}}^p dt'\right)^{\f1p} \label{1.9} 
\end{align}
with the usual change if $p=\infty$.

\section{Global well-posedness of the equations \eqref{Oldroyd-B-Prandtl}: Proof of Theorem \ref{th1.1}}
In this section, we present the proof of Theorem \ref{th1.1}, that is, the global well-posedness of hydrostatic Oldroyd-B equations \eqref{Oldroyd-B-Prandtl}. For the sake of brevity, we only establish the {\it a prior} estimate in analytic space which is the core of the proof.

To characterize the evolution of the analytic band of $u$, we introduce
\begin{align}\label{theta}
\left\{\begin{aligned}
&\dot{\eta_1}(t)=\normm{\p_yu_\Psi(t)}_{H^{s_1,s_2}}\\
&\dot{\eta_2}(t)=\normm{\p_y^2u_{\Psi}(t)}_{H^{s_1,s_2}},\\
&\eta_1|_{t=0}=\eta_2|_{t=0}=0,
\end{aligned}
\right.
\quad\eta(t):=\eta_1(t)+\eta_2(t).\quad
\end{align}
Then to establish the {\it a prior} estimate in analytic space in the framework of Fourier analysis, we define the following weighted Fourier multiplier:
\beq\label{upsi}
u_{\Psi}:=\cF^{-1}(e^{\Psi}\widehat{u}(t,\xi,k)),
\eeq
where the phase function $\Psi$ satisfies
\beq\label{phase}
\Psi=(a-\lambda\eta(t))\left(1+\abs{\xi}\right), 
\eeq
the large constant $\lambda$ will be determined later.

We shall apply the standard bootstrapping argument (See \cite[Proposition 1.2.1]{Tao}) in our presentation. To this end, we denote by
\beq\label{time-theta} 
T^\star:=
\sup\left\{t>0\mid\eta(t)<\frac{a}{\lambda}, \quad \normm{(u_{\Psi},\p_yu_{\Psi})}_{L_t^\infty H^{s_1,s_2}}^2<\min\left\{\eps_1,\frac{1}{16C_1}\right\}\right\}, 
\eeq 
where the small constant $\eps_1$ (See \eqref{eps1-1} and \eqref{eps1-2}) and the large constant $C_1$ will be determined in Lemma \ref{nonlinear}. By virtue of \eqref{phase}, for $t<T^\star$, there holds the convex inequality:
\[
\Psi(t,\xi_1)\leqslant\Psi(t,\xi_1-\xi_2)+\Psi(t,\xi_2) \quad \Forall \xi_1,\xi_2\in\R.
\]
We shall show that $T^\star=+\infty$ in the following by the energy method.

\subsection{Proof of Theorem \ref{th1.1}: Estimate on $u$}
We re-write the equations for $u$ in \eqref{Oldroyd-B-Prandtl} as follows:
\begin{align}\label{HOB-u}
\p_tu+u\cdot\nabla_xu+v\p_yu-\p_y^2u+\nabla_xp=(1-\theta)\mathcal F,
\end{align}
where
\beq\label{FG1G2}
\begin{split}
&\mathcal F=\p_y^2u\mathcal G_1(\p_yu)-2\sigma\p_yu(\p_yu\cdot\p_y^2u)\mathcal G_2(\p_yu)-2\sigma\p_yu(\p_yu\cdot\p_y^2u),\\
&\mathcal G_1(\p_yu)=\frac{1}{1+\sigma((\p_yu_1)^2+(\p_yu_2)^2)}-1=\frac{1}{1+\sigma|\p_yu|^2}-1,\\
&\mathcal G_2(\p_yu)=\frac{1}{(1+\sigma((\p_yu_1)^2+(\p_yu_2)^2))^2}-1=\frac{1}{(1+\sigma|\p_yu|^2)^2}-1.	
\end{split}
\eeq
Applying the Fourier multiplier \eqref{upsi} to the equations \eqref{HOB-u}, we get
\begin{equation}\label{upsi-equ}
\begin{split}
\p_tu_\Psi+\lambda\dot{\eta_1}(t)\langle D_x \rangle u_\Psi+(u\cdot\nabla_xu)_\Psi+(v\p_yu)_\Psi-\p_y^2u_\Psi+\nabla_xp_{\Psi}=(1-\theta)\mathcal F_{\Psi}.	
\end{split}
\end{equation}
Notice that the Poincar\'e inequality implies that
\beq\label{Poincare}
\int_{\mathbb{T}}u(t,x,y)\mathrm{d}y=0\Longrightarrow\frak{K}\normm{u_\Psi}_{H^{s_1,s_2}}^2\leqslant \frac12\normm{\p_yu_\Psi}_{H^{s_1,s_2}}^2,
\eeq
where we can choose $\frak{K}<\frac{1}{2}$. Then performing the standard energy method for the equation \eqref{upsi-equ} and using the divergence-free condition, we obtain that
\beq\label{diff-u}
\begin{split}
&\f12\f{d}{dt}\normm{e^{\frak{K}t}u_\Psi(t)}_{H^{s_1,s_2}}^2+\lambda\dot{\eta_1}(t)\normm{e^{\frak{K}t}u_\Psi}_{H^{s_1+\f12,s_2}}^2+\f12\normm{e^{\frak{K}t}\p_yu_\Psi}_{H^{s_1,s_2}}^2\\
\leqslant&\abs{\inner{e^{\frak{K}t}(u\cdot\nabla_xu)_\Psi, e^{\frak{K}t}u_\Psi}_{H^{s_1,s_2}}}+\abs{\inner{e^{\frak{K}t}(v\p_yu)_\Psi, e^{\frak{K}t}u_\Psi}_{H^{s_1,s_2}}}\\
&+(1-\theta)\abs{\inner{e^{\frak{K}t}\mathcal F_{\Psi}, e^{\frak{K}t}u_\Psi}_{H^{s_1,s_2}}}.
\end{split}
\eeq
Now integrating the resulting inequality \eqref{diff-u} over $[0,t]$ with $t<T^*$, it holds that
\beq\label{inte-u}
\begin{split}
&\f12\normm{e^{\frak{K}t}u_\Psi(t)}_{L_t^\infty H^{s_1,s_2}}^2+\f12\normm{e^{\frak{K}t}\p_yu_\Psi}_{L_t^2H^{s_1,s_2}}^2+\lambda\normm{e^{\frak{K}t}u_\Psi}_{{L}^2_{t,\dot{\eta_1}(t)}(H^{s_1+\f12,s_2})}^2\\
\leqslant & \f12\normm{e^{a\langle D_x\rangle}u_0}_{H^{s_1,s_2}}^2+B_1+B_2+(1-\theta)(B_3+B_4+B_5),
\end{split}
\eeq
where
\begin{align*}
B_1&=\int_0^t\abs{\inner{e^{\frak{K}t^{\prime}}(u\cdot\nabla_xu)_\Psi, e^{\frak{K}t^{\prime}}u_\Psi}_{H^{s_1,s_2}}}\mathrm{d}t',\\
B_2&=\int_0^t\abs{\inner{e^{\frak{K}t^{\prime}}(v\p_yu)_\Psi, e^{\frak{K}t^{\prime}}u_\Psi}_{H^{s_1,s_2}}}\mathrm{d}t',\\
B_3&=\int_0^t\abs{\inner{e^{\frak{K}t^{\prime}}(\p_y^2u\mathcal G_1(\p_yu))_{\Psi}, e^{\frak{K}t^{\prime}}u_\Psi}_{H^{s_1,s_2}}}\mathrm{d}t',\\
B_4&=2\sigma\int_0^t\abs{\inner{e^{\frak{K}t^{\prime}}(\p_yu(\p_yu\cdot\p_y^2u)\mathcal G_2(\p_yu))_\Psi, e^{\frak{K}t^{\prime}}u_\Psi}_{H^{s_1,s_2}}}\mathrm{d}t',\\
B_5&=2\sigma\int_0^t\abs{\inner{e^{\frak{K}t^{\prime}}(\p_yu(\p_yu\cdot\p_y^2u))_\Psi, e^{\frak{K}t^{\prime}}u_\Psi}_{H^{s_1,s_2}}}\mathrm{d}t'.
\end{align*}
From the energy estimate for $u$, we find the higher derivative for $y$ variables appears in \eqref{inte-u} due to nonlinear term $\mathcal{F}$. This indicates that we have to combine the estimate on $\p_yu$.

\subsection{Proof of Theorem \ref{th1.1}: Estimate on $\partial_yu$}
Applying the operator $\p_y$ to \eqref{HOB-u}, we find $\p_yu$ satisfies the following:
\begin{align}\label{HOB-pyu}
\p_t\p_yu+u\cdot\nabla_x\p_yu+v\p_y^2u+\p_yu\cdot\nabla_xu+\p_yv\p_yu-\p_y^3u=&(1-\theta)\p_y\mathcal F.
\end{align}
Then performing the standard energy method for the equation \eqref{HOB-pyu} and using the divergence-free condition, we obtain that
\beq\label{diff-pyu}
\begin{split}
&\f12\f{d}{dt}\normm{e^{\frak{K}t}\p_yu_\Psi(t)}_{H^{s_1,s_2}}^2+\lambda\dot{\eta_1}(t)\normm{e^{\frak{K}t}\p_yu_\Psi}_{H^{s_1+\f12,s_2}}^2+\f12\normm{e^{\frak{K}t}\p_y^2u_\Psi}_{H^{s_1,s_2}}^2\\
\leqslant&\abs{\inner{e^{\frak{K}t}(u\cdot\nabla_x\p_yu)_\Psi+e^{\frak{K}t}(\p_yu\cdot\nabla_xu)_\Psi, e^{\frak{K}t}\p_yu_\Psi}_{H^{s_1,s_2}}}\\
&+\abs{\inner{e^{\frak{K}t}(v\p_y^2u)_\Psi+e^{\frak{K}t}(\p_yv\p_yu)_\Psi, e^{\frak{K}t}\p_yu_\Psi}_{H^{s_1,s_2}}}\\
&+(1-\theta)\abs{\inner{e^{\frak{K}t}\p_y\mathcal F_{\Psi}, e^{\frak{K}t}\p_yu_\Psi}_{H^{s_1,s_2}}}.
\end{split}
\eeq
Now integrating the resulting inequality \eqref{diff-pyu} over $[0,t]$ with $t<T^*$, it holds that
\beq\label{inte-pyu}
\begin{split}
&\f12\normm{e^{\frak{K}t}\p_yu_\Psi(t)}_{L_t^\infty H^{s_1,s_2}}^2+\f12\normm{e^{\frak{K}t}\p_y^2u_\Psi}_{L_t^2H^{s_1,s_2}}^2+\lambda\normm{e^{\frak{K}t}\p_yu_\Psi}_{{L}^2_{t,\dot{\eta_2}(t)}(H^{s_1+\f12,s_2})}^2\\
\leqslant & \f12\normm{e^{a\langle D_x\rangle}\p_yu_0}_{H^{s_1,s_2}}^2+B_6+B_7+(1-\theta)(B_{8}+B_{9}+B_{10}),
\end{split}
\eeq
where
\begin{align*}
B_6&=\int_0^t\abs{\inner{e^{\frak{K}t}(u\cdot\nabla_x\p_yu)_\Psi+e^{\frak{K}t}(\p_yu\cdot\nabla_xu)_\Psi, e^{\frak{K}t}\p_yu_\Psi}_{H^{s_1,s_2}}}\mathrm{d}t',\\
B_7&=\int_0^t\abs{\inner{e^{\frak{K}t}(v\p_y^2u)_\Psi+e^{\frak{K}t}(\p_yv\p_yu)_\Psi, e^{\frak{K}t}\p_yu_\Psi}_{H^{s_1,s_2}}}\mathrm{d}t',\\
B_8&=\int_0^t\abs{\inner{e^{\frak{K}t}(\p_y^2u\mathcal G_1(\p_yu))_{\Psi}, e^{\frak{K}t}\p_y^2u_\Psi}_{H^{s_1,s_2}}}\mathrm{d}t',\\
B_9&=2\sigma\int_0^t\abs{\inner{e^{\frak{K}t^{\prime}}(\p_yu(\p_yu\cdot\p_y^2u)\mathcal G_2(\p_yu))_\Psi, e^{\frak{K}t^{\prime}}\p_y^2u_\Psi}_{H^{s_1,s_2}}}\mathrm{d}t',\\
B_{10}&=2\sigma\int_0^t\abs{\inner{e^{\frak{K}t^{\prime}}(\p_yu(\p_yu\cdot\p_y^2u))_\Psi, e^{\frak{K}t^{\prime}}\p_y^2u_\Psi}_{H^{s_1,s_2}}}\mathrm{d}t'.
\end{align*}

\subsection{Completing the proof of Theorem \ref{th1.1}}

Summing the energy inequality \eqref{inte-u} and \eqref{inte-pyu}, we get 
\beq\label{inte-u+pyu-1}
\begin{split}
&\f12\normm{e^{\frak{K}t}u_\Psi(t)}_{L_t^\infty H^{s_1,s_2}}^2+\f12\normm{e^{\frak{K}t}\p_yu_\Psi}_{L_t^2H^{s_1,s_2}}^2+\lambda\normm{e^{\frak{K}t}u_\Psi}_{{L}^2_{t,\dot{\eta_1}(t)}(H^{s_1+\f12,s_2})}^2\\
&+\f12\normm{e^{\frak{K}t}\p_yu_\Psi(t)}_{L_t^\infty H^{s_1,s_2}}^2+\f12\normm{e^{\frak{K}t}\p_y^2u_\Psi}_{L_t^2H^{s_1,s_2}}^2+\lambda\normm{e^{\frak{K}t}\p_yu_\Psi}_{{L}^2_{t,\dot{\eta_2}(t)}(H^{s_1+\f12,s_2})}^2\\
\leqslant & \f12\normm{e^{a\langle D_x\rangle}u_0}_{H^{s_1,s_2}}^2+\f12\normm{e^{a\langle D_x\rangle}\p_yu_0}_{H^{s_1,s_2}}^2+B_1+B_2+B_6+B_7\\
&+(1-\theta)(B_3+B_4+B_5+B_{8}+B_{9}+B_{10}).
\end{split}
\eeq
We deal with the right-hand-side of the above inequality by the following lemma. 
\begin{mylem}\label{nonlinear}
Suppose that $\nabla_x\cdot u+\p_yv=0$, then for $s_1>\f32,s_2>\f12$ and $t<T^\star$, there holds 
\begin{align*}
&B_1+B_2+B_6+B_7+(1-\theta)(B_3+B_4+B_5+B_{8}+B_{9}+B_{10})\\
\leqslant &C_1\left(\normm{e^{\frak{K}t}u_\Psi}_{{L}^2_{t,\dot{\eta_1}(t)}(H^{s_1+\f12,s_2})}^2+\normm{e^{\frak{K}t}\p_yu_\Psi}_{{L}^2_{t,\dot{\eta_2}(t)}(H^{s_1+\f12,s_2})}^2\right)\\
&+C_1\left(\normm{u_\Psi}_{{L}^\infty_{t}(H^{s_1,s_2})}^2+\normm{\p_yu_\Psi}_{{L}^\infty_{t}(H^{s_1,s_2})}^2+\normm{\p_yu_\Psi}_{{L}^\infty_{t}(H^{s_1,s_2})}^4++\normm{\p_yu_\Psi}_{{L}^\infty_{t}(H^{s_1,s_2})}^6\right)\\
&\qquad\times\left(\normm{e^{\frak{K}t}\p_yu_\Psi}_{L_t^2(H^{s_1,s_2})}^2+\normm{e^{\frak{K}t}\p_y^2u_\Psi}_{L_t^2(H^{s_1,s_2})}^2\right),
\end{align*}
where the constant $C_1$ depends on $s_1,s_2$, $\mathcal G_1(z)$ and $\mathcal G_1(z)$ (See \eqref{HOB-u}).
\end{mylem}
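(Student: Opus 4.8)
### Proof strategy for Lemma \ref{nonlinear}

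The plan is to estimate each of the ten terms $B_1,\dots,B_{10}$ separately using the algebra structure of the anisotropic Sobolev spaces $H^{s_1,s_2}$ together with the convexity of the phase $\Psi$, which was recorded just before the lemma: $\Psi(t,\xi_1)\leqslant\Psi(t,\xi_1-\xi_2)+\Psi(t,\xi_2)$. This convexity is the crucial device: it lets one replace a Fourier multiplier applied to a product by the product of the multipliers applied to each factor, up to the loss of one derivative in $\xi$ which is exactly paid for by the good term $\lambda\dot\eta_j(t)\normm{\cdot}_{H^{s_1+\frac12,s_2}}^2$. Concretely, for $f,g$ one writes $\widehat{(fg)_\Psi}(\xi)\lesssim\int e^{\Psi(\xi-\xi_2)}|\widehat f(\xi-\xi_2)|\,e^{\Psi(\xi_2)}|\widehat g(\xi_2)|\,d\xi_2$, so that products are controlled by $\normm{f_\Psi}\normm{g_\Psi}$-type quantities in $H^{s_1,s_2}$, using $s_1>\frac32,s_2>\frac12$ so that $H^{s_1,s_2}(\Omega)$ is a Banach algebra (and the horizontal low-regularity factor can be placed in $L^\infty_x$ via $s_1-\frac12>1$).

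For the transport-type terms $B_1,B_2,B_6,B_7$ I would follow the now-standard hydrostatic-Prandtl scheme: after integrating by parts in $x$ (using $\nabla_x\cdot u+\p_y v=0$) one reduces $v$ to $-\int_0^y\nabla_x\cdot u\,dy'$, controls $\normm{v_\Psi}_{H^{s_1,s_2}}\lesssim\normm{\nabla_x u_\Psi}_{H^{s_1,s_2}}\lesssim\normm{u_\Psi}_{H^{s_1+\frac12,s_2}}^{1/2}\normm{\p_y u_\Psi}^{1/2}$ and similar, and distributes derivatives so that every term carries either a half-derivative in $x$ (absorbed into the $\lambda\dot\eta_1$ or $\lambda\dot\eta_2$ dissipation terms, using that $\dot\eta_1=\normm{\p_y u_\Psi}_{H^{s_1,s_2}}$ and $\dot\eta_2=\normm{\p_y^2 u_\Psi}_{H^{s_1,s_2}}$ by \eqref{theta}) or a full $\p_y$ or $\p_y^2$ derivative that goes into the $L^2_t H^{s_1,s_2}$ norms on the right-hand side, with the low-order factor giving the $L^\infty_t$ prefactor. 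The $e^{\frak{K}t}$ weight plays no role here beyond being carried along, since $\frak K<\frac12$.

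For the genuinely new terms $B_3,B_4,B_5$ and $B_8,B_9,B_{10}$ coming from $\mathcal F$, the point is that $\mathcal G_1,\mathcal G_2$ are smooth functions vanishing at $0$, so composition estimates in the analytic-Sobolev setting give $\normm{(\mathcal G_i(\p_y u))_\Psi}_{H^{s_1,s_2}}\lesssim P(\normm{\p_y u_\Psi}_{H^{s_1,s_2}})\normm{\p_y u_\Psi}_{H^{s_1,s_2}}^2$ for a polynomial $P$ (this is where the quartic and sextic powers $\normm{\p_y u_\Psi}^4,\normm{\p_y u_\Psi}^6$ in the statement originate — the composition produces powers of the low-order $L^\infty_t$ norm, while $\mathcal G_i$ itself being $O(|\p_y u|^2)$ supplies the extra quadratic factor that makes the bound quadratic in the $L^2_t$ dissipation). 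Then $B_3$ is bounded by pairing $\p_y^2 u\,\mathcal G_1(\p_y u)$ against $u_\Psi$: put $\p_y^2 u_\Psi$ and one copy of $\p_y u_\Psi$ in $L^2_t H^{s_1,s_2}$, the remaining factors in $L^\infty_t$; $B_8$ is the same with $u_\Psi$ replaced by $\p_y^2 u_\Psi$, costing another $L^2_t$ factor; $B_4,B_5,B_9,B_{10}$ are handled identically, the cubic-in-$\p_y u$ structure $\p_y u(\p_y u\cdot\p_y^2 u)$ again leaving exactly $\normm{\p_y u_\Psi}_{L^2_t}^2$ or $\normm{\p_y^2 u_\Psi}_{L^2_t}^2$ plus $L^\infty_t$ prefactors. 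Throughout, any stray factor $e^{\Psi}$ applied to an odd product is handled by the convexity inequality exactly as for a bilinear product, iterated.

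The main obstacle I anticipate is the bookkeeping for the composition/multiplier estimates on $\mathcal G_i(\p_y u)$: one must verify that $e^\Psi$ composed with a nonlinear function of $\p_y u$ is still controlled — the clean way is to expand $\mathcal G_i$ as a convergent power series $\sum c_n (\sigma|\p_y u|^2)^n$ (valid since $\sigma|\p_y u|^2$ is small on $[0,T^\star)$ by the definition \eqref{time-theta} of $T^\star$ with $\eps_1$ small) and apply the product convexity estimate term by term, summing the geometric-type series; this is exactly the step that forces the smallness threshold $\eps_1$ and fixes the dependence of $C_1$ on $\mathcal G_1,\mathcal G_2$. A secondary technical point is making sure that in every term at least one factor can be taken with a \emph{half}-derivative in $x$ so that the transport contributions are genuinely absorbed by $\lambda\normm{\cdot}_{L^2_{t,\dot\eta_j}(H^{s_1+\frac12,s_2})}^2$ rather than by the parabolic $\p_y$-dissipation alone, since the latter does not see horizontal regularity; this is the standard subtlety of the analytic hydrostatic method and is resolved by the choice $\dot\eta_1=\normm{\p_y u_\Psi}_{H^{s_1,s_2}}$, $\dot\eta_2=\normm{\p_y^2u_\Psi}_{H^{s_1,s_2}}$, which pairs each transported quantity with precisely the dissipation rate it needs.
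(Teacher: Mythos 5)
Your proposal is correct and follows essentially the same route as the paper: the convexity of $\Psi$ plus the product law in $H^{s_1,s_2}$ handles all bilinear pairings, the half-derivative losses in $x$ are absorbed into the $\dot{\eta_1}$- and $\dot{\eta_2}$-weighted dissipation norms exactly as you describe, and the terms involving $\mathcal G_1,\mathcal G_2$ are treated by the power-series composition estimate under the smallness threshold $\eps_1$ (the paper's Lemma \ref{lemma-para-linearization-psi}), which is precisely where $C_1$ picks up its dependence on $\mathcal G_1,\mathcal G_2$ and where the quartic and sextic powers arise.
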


The proof of Lemma \eqref{nonlinear} will be presented in the end of this section. Now we get, by virtue of \eqref{inte-u}, \eqref{inte-pyu}, and Lemma \ref{nonlinear} that 
\beqo
\begin{split}
&\f12\normm{e^{\frak{K}t}u_\Psi(t)}_{L_t^\infty H^{s_1,s_2}}^2+\f12\normm{e^{\frak{K}t}\p_yu_\Psi}_{L_t^2H^{s_1,s_2}}^2+\lambda\normm{e^{\frak{K}t}u_\Psi}_{{L}^2_{t,\dot{\eta_1}(t)}(H^{s_1+\f12,s_2})}^2\\
&+\f12\normm{e^{\frak{K}t}\p_yu_\Psi(t)}_{L_t^\infty H^{s_1,s_2}}^2+\f12\normm{e^{\frak{K}t}\p_y^2u_\Psi}_{L_t^2H^{s_1,s_2}}^2+\lambda\normm{e^{\frak{K}t}\p_yu_\Psi}_{{L}^2_{t,\dot{\eta_2}(t)}(H^{s_1+\f12,s_2})}^2\\
\leqslant & \f12\normm{e^{a\langle D_x\rangle}u_0}_{H^{s_1,s_2}}^2+\f12\normm{e^{a\langle D_x\rangle}\p_yu_0}_{H^{s_1,s_2}}^2\\
&+C_1\left(\normm{e^{\frak{K}t}u_\Psi}_{{L}^2_{t,\dot{\eta_1}(t)}(H^{s_1+\f12,s_2})}^2+\normm{e^{\frak{K}t}\p_yu_\Psi}_{{L}^2_{t,\dot{\eta_2}(t)}(H^{s_1+\f12,s_2})}^2\right)\\
&+C_1\left(\normm{u_\Psi}_{{L}^\infty_{t}(H^{s_1,s_2})}^2+\normm{\p_yu_\Psi}_{{L}^\infty_{t}(H^{s_1,s_2})}^2+\normm{\p_yu_\Psi}_{{L}^\infty_{t}(H^{s_1,s_2})}^4++\normm{\p_yu_\Psi}_{{L}^\infty_{t}(H^{s_1,s_2})}^6\right)\\
&\quad\times\left(\normm{e^{\frak{K}t}\p_yu_\Psi}_{L_t^2(H^{s_1,s_2})}^2+\normm{e^{\frak{K}t}\p_y^2u_\Psi}_{L_t^2(H^{s_1,s_2})}^2\right).
\end{split}
\eeqo
Thus taking $\lambda=2C_1$ in the above inequality and using \eqref{time-theta}, we deduce
\begin{multline}\label{Inte-u+pyu}
\normm{e^{\frak{K}t}u_\Psi(t)}_{L_t^\infty H^{s_1,s_2}}^2+\normm{e^{\frak{K}t}\p_yu_\Psi}_{L_t^2H^{s_1,s_2}}^2+\normm{e^{\frak{K}t}\p_yu_\Psi(t)}_{L_t^\infty H^{s_1,s_2}}^2+\normm{e^{\frak{K}t}\p_y^2u_\Psi}_{L_t^2H^{s_1,s_2}}^2\\
\leqslant 2\left(\normm{e^{a\langle D_x\rangle}u_0}_{H^{s_1,s_2}}^2+\normm{e^{a\langle D_x\rangle}\p_yu_0}_{H^{s_1,s_2}}^2\right),
\end{multline}
for $t<T^\star$. In particular, it holds that
\beq
\begin{split}\label{the-esti}
\eta(t)= &\int_0^t\normm{\p_yu_\Psi(t')}_{H^{s_1,s_2}}+\normm{\p_y^2u_\Psi(t')}_{H^{s_1,s_2}}\mathrm{d}t'\\
\leqslant &\left(\int_0^te^{-2\frak{K}t'}\mathrm{d}t'\right)^{\f12}\left(\normm{e^{\frak{K}t}\p_yu_\Psi}_{L_t^2H^{s_1,s_2}}+\normm{e^{\frak{K}t}\p_y^2u_\Psi}_{L_t^2H^{s_1,s_2}}\right)\\
\leqslant &2(2\frak{K})^{-\f12}\left(\normm{e^{a\langle D_x\rangle}u_0}_{H^{s_1,s_2}}+\normm{e^{a\langle D_x\rangle}\p_yu_0}_{H^{s_1,s_2}}\right). 
\end{split}
\eeq
Hence if we take $c_1$ in the initial data to be small such that
\beq\label{3.51}
\begin{split}
&2(2\frak{K})^{-\f12}\left(\normm{e^{a\langle D_x\rangle}u_0}_{H^{s_1,s_2}}+\normm{e^{a\langle D_x\rangle}\p_yu_0}_{H^{s_1,s_2}}\right)\leqslant 2(2\frak{K})^{-\f12}c_1a<\f{a}{2\lambda},\\
&\normm{e^{a\langle D_x\rangle}u_0}_{H^{s_1,s_2}}+\normm{e^{a\langle D_x\rangle}\p_yu_0}_{H^{s_1,s_2}}\leqslant c_1a<\min\left\{\frac{\eps_1}{2},\frac{1}{32C_1}\right\}.
\end{split}
\eeq
Then the bootstrapping argument shows that $T^\star=+\infty$. Finally, notice that the bootstrapping argument also implies that 
\beq\label{psilow}
\Psi(t)\geqslant\f12a\left(1+\abs{\xi}\right),
\eeq
for all $t>0$. Hence the global well-posedness of the equations \eqref{Oldroyd-B-Prandtl} follows from the standard regularization process and the estimate \eqref{Inte-u+pyu} implies that the energy inequality \eqref{energy inequality} in Theorem \ref{th1.1} holds. The proof of Theorem \ref{th1.1} is thus complete.

\subsection{Proof of Lemma \ref{nonlinear}}

In this subsection, we present the proof of Lemma \ref{nonlinear} and we set $s_1>\f32$ and $s_2>\f12$.

\noindent
$\bullet$ \underline{Estimate of $B_{1}$ and $B_{6}$.}\\ Notice that $s_1>\f32$ and $s_2>\f12$, we obtain from the product law (See Lemma \ref{lemma-product}) that
\begin{align*}
\abs{\inner{(u\cdot\nabla_xu)_\Psi,u_\Psi}_{H^{s_1,s_2}}}\leqslant &C\normm{(u\cdot\nabla_xu)_{\Psi}}_{H^{s_1-\f12,s_2}}\normm{u_{\Psi}}_{H^{s_1+\f12,s_2}}\\
\leqslant &C\normm{u_{\Psi}}_{H^{s_1-\f12,s_2}}\normm{u_{\Psi}}_{H^{s_1+\f12,s_2}}^2\\
\leqslant &C\normm{\p_yu_{\Psi}}_{H^{s_1,s_2}}\normm{u_{\Psi}}_{H^{s_1+\f12,s_2}}^2,
\end{align*}
where we also used the Poincar\'e inequality \eqref{Poincare} in the last inequality. Therefore we obtain
\begin{align}\label{B1}
B_1&=\int_0^t\abs{\inner{e^{\frak{K}t^{\prime}}(u\cdot\nabla_xu)_\Psi, e^{\frak{K}t^{\prime}}u_\Psi}_{H^{s_1,s_2}}}\mathrm{d}t'\leqslant C\normm{e^{\frak{K}t}u_\Psi}_{{L}^2_{t,\dot{\eta_1}(t)}(H^{s_1+\f12,s_2})}^2.
\end{align}
\noindent
Similarly, it holds that
\beq\label{B6}
\begin{split}
B_6\leqslant &\int_0^te^{2\frak{K}t^{\prime}}\normm{\p_y^2u_{\Psi}}_{H^{s_1,s_2}}\normm{\p_yu_{\Psi}}_{H^{s_1+\f12,s_2}}^2\mathrm{d}t'\\
&+\int_0^te^{2\frak{K}t^{\prime}}\normm{\p_y^2u_{\Psi}}_{H^{s_1,s_2}}^\f12\normm{\p_yu_{\Psi}}_{H^{s_1+\f12,s_2}}\normm{\p_yu_{\Psi}}_{H^{s_1,s_2}}^\f12\normm{u_{\Psi}}_{H^{s_1+\f12,s_2}}\mathrm{d}t'\\
\leqslant &C\left(\normm{e^{\frak{K}t}u_\Psi}_{{L}^2_{t,\dot{\eta_1}(t)}(H^{s_1+\f12,s_2})}^2+\normm{e^{\frak{K}t}\p_yu_\Psi}_{{L}^2_{t,\dot{\eta_2}(t)}(H^{s_1+\f12,s_2})}^2\right).
\end{split}
\eeq

\noindent
$\bullet$ \underline{Estimate of $B_{2}$ and $B_{7}$.}\\ Recalling 
\[
\int_{\mathbb{T}}v(t,x,y)\mathrm{d}y=0,
\]
thus by virtue of Lemma \ref{lemma-product} and Poincar\'e inequality, we have
\begin{align*}
\abs{\inner{(v\p_yu)_\Psi, u_\Psi}_{H^{s_1,s_2}}}\leqslant &C\normm{(v\p_yu)_{\Psi}}_{H^{s_1-\f12,s_2}}\normm{u_{\Psi}}_{H^{s_1+\f12,s_2}}\\
\leqslant &C\normm{\nabla_x\cdot u_{\Psi}}_{H^{s_1-\f12,s_2}}\normm{\p_yu_{\Psi}}_{H^{s_1-\f12,s_2}}\normm{u_{\Psi}}_{H^{s_1+\f12,s_2}}\\
\leqslant &C\normm{\p_yu_{\Psi}}_{H^{s_1,s_2}}\normm{u_{\Psi}}_{H^{s_1+\f12,s_2}}^2,
\end{align*}
where we also used the fact $\nabla_x\cdot u+\p_yv=0$ in the second inequality. Hence it yields that
\begin{align}\label{B2}
B_2=\int_0^t\abs{\inner{e^{\frak{K}t^{\prime}}(v\p_yu)_\Psi, e^{\frak{K}t^{\prime}}u_\Psi}_{H^{s_1,s_2}}}\mathrm{d}t'\leqslant C\normm{e^{\frak{K}t}u_\Psi}_{{L}^2_{t,\dot{\eta_1}(t)}(H^{s_1+\f12,s_2})}^2.
\end{align}
\noindent
Similarly, it holds that
\beq\label{B7}
\begin{split}
B_7\leqslant &\int_0^te^{2\frak{K}t^{\prime}}\normm{\p_y^2u_{\Psi}}_{H^{s_1,s_2}}\normm{\p_yu_{\Psi}}_{H^{s_1+\f12,s_2}}^2\mathrm{d}t'\\
&+\int_0^te^{2\frak{K}t^{\prime}}\normm{\p_y^2u_{\Psi}}_{H^{s_1,s_2}}^\f12\normm{\p_yu_{\Psi}}_{H^{s_1+\f12,s_2}}\normm{\p_yu_{\Psi}}_{H^{s_1,s_2}}^\f12\normm{u_{\Psi}}_{H^{s_1+\f12,s_2}}\mathrm{d}t'\\
\leqslant &C\left(\normm{e^{\frak{K}t}u_\Psi}_{{L}^2_{t,\dot{\eta_1}(t)}(H^{s_1+\f12,s_2})}^2+\normm{e^{\frak{K}t}\p_yu_\Psi}_{{L}^2_{t,\dot{\eta_2}(t)}(H^{s_1+\f12,s_2})}^2\right).
\end{split}
\eeq

\noindent
$\bullet$ \underline{Estimate on $B_{3}$ and $B_{8}$.}\\ Since the function $\mathcal G_1(z)=\frac{1}{1+\sigma z}-1$ is holomorphic in the region $\{z:|z|<\frac{1}{\sigma}\}$ of complex plane and $\mathcal G_1(0)=0$, there exists $\eps_1>0$ such that if 
\beq\label{eps1-1}
\normm{((\p_yu_1)^2+(\p_yu_2)^2)_{\Psi}}_{L_t^\infty H^{s_1,s_2}}<\normm{\p_yu_{\Psi}}_{L_t^\infty H^{s_1,s_2}}^2<\eps_1,
\eeq
then it follows from Lemma \ref{lemma-para-linearization-psi} that (setting $z=(\p_yu_1)^2+(\p_yu_2)^2$)
\beq\label{eps0}
\normm{(\mathcal G_1(\p_yu))_{\Psi}}_{H^{s_1,s_2}}\leqslant C\normm{((\p_yu_1)^2+(\p_yu_2)^2)_{\Psi}}_{H^{s_1,s_2}}\leqslant C\normm{\p_yu_{\Psi}}_{H^{s_1,s_2}}^2,
\eeq
where we also used Lemma \ref{lemma-product} in the second inequality. Thus we deduce from Lemma \ref{lemma-product} again that
\begin{align*}
\abs{\inner{(\p_y^2u\mathcal G_1(\p_yu))_{\Psi}, u_\Psi}_{H^{s_1,s_2}}}\leqslant C&\normm{\p_y^2u_{\Psi}}_{H^{s_1,s_2}}\normm{(\mathcal G_1(\p_yu))_{\Psi}}_{H^{s_1,s_2}}\normm{u_{\Psi}}_{H^{s_1,s_2}}\\
\leqslant C&\normm{\p_y^2u_{\Psi}}_{H^{s_1,s_2}}\normm{\p_yu_{\Psi}}_{H^{s_1,s_2}}^2\normm{u_{\Psi}}_{H^{s_1,s_2}}.
\end{align*}
Hence there holds
\beq\label{B3}
\begin{split}
B_3=&\int_0^t\abs{\inner{e^{\frak{K}t^{\prime}}(\p_y^2u\mathcal G_1(\p_yu))_{\Psi}, e^{\frak{K}t^{\prime}}u_\Psi}_{H^{s_1,s_2}}}\mathrm{d}t'\\
\leqslant &C\normm{u_\Psi}_{{L}^\infty_{t}(H^{s_1,s_2})}\normm{\p_yu_\Psi}_{{L}^\infty_{t}(H^{s_1,s_2})}\normm{e^{\frak{K}t}\p_yu_\Psi}_{L_t^2(H^{s_1,s_2})}\normm{e^{\frak{K}t}\p_y^2u_\Psi}_{L_t^2(H^{s_1,s_2})}\\
\leqslant &C\normm{u_\Psi}_{{L}^\infty_{t}(H^{s_1,s_2})}\normm{\p_yu_\Psi}_{{L}^\infty_{t}(H^{s_1,s_2})}\left(\normm{e^{\frak{K}t}\p_yu_\Psi}_{L_t^2(H^{s_1,s_2})}^2+\normm{e^{\frak{K}t}\p_y^2u_\Psi}_{L_t^2(H^{s_1,s_2})}^2\right).
\end{split}
\eeq
\noindent
Similarly, it holds that
\beq\label{B8}
\begin{split}
B_8=&\int_0^t\abs{\inner{e^{\frak{K}t^{\prime}}(\p_y^2u\mathcal G_1(\p_yu))_{\Psi}, e^{\frak{K}t^{\prime}}\p_y^2u_\Psi}_{H^{s_1,s_2}}}\mathrm{d}t'\\
\leqslant &C\normm{\p_yu_\Psi}_{{L}^\infty_{t}(H^{s_1,s_2})}^2\normm{e^{\frak{K}t}\p_y^2u_\Psi}_{L_t^2(H^{s_1,s_2})}^2.
\end{split}
\eeq

\noindent
$\bullet$ \underline{Estimate on $B_{4}$ and $B_{9}$.}\\ Following the similar argument in $B_{3}$, we obtain from $\mathcal G_2(0)=0$ that there exists $\eps_1>0$ such that if 
\beq\label{eps1-2}
\normm{((\p_yu_1)^2+(\p_yu_2)^2)_{\Psi}}_{L_t^\infty H^{s_1,s_2}}<\normm{\p_yu_{\Psi}}_{L_t^\infty H^{s_1,s_2}}^2<\eps_1,
\eeq
then Lemmas \ref{lemma-product}--\ref{lemma-para-linearization-psi} (setting $z=(\p_yu_1)^2+(\p_yu_2)^2$) imply that 
\beq\label{eps0}
\normm{(\mathcal G_2(\p_yu))_{\Psi}}_{H^{s_1,s_2}}\leqslant C\normm{((\p_yu_1)^2+(\p_yu_2)^2)_{\Psi}}_{H^{s_1,s_2}}\leqslant C\normm{\p_yu_{\Psi}}_{H^{s_1,s_2}}^2.
\eeq
Thus we deduce from Lemma \ref{lemma-product} again that
\begin{align*}
&\abs{\inner{(\p_yu(\p_yu\cdot\p_y^2u)\mathcal G_2(\p_yu))_\Psi, u_\Psi}_{H^{s_1,s_2}}}\\
\leqslant C&\normm{u_{\Psi}}_{H^{s_1,s_2}}\normm{\p_yu_{\Psi}}_{H^{s_1,s_2}}^2\normm{\p_y^2u_{\Psi}}_{H^{s_1,s_2}}\normm{(\mathcal G_2(\p_yu))_{\Psi}}_{H^{s_1,s_2}}\\
\leqslant C&\normm{u_{\Psi}}_{H^{s_1,s_2}}\normm{\p_yu_{\Psi}}_{H^{s_1,s_2}}^4\normm{\p_y^2u_{\Psi}}_{H^{s_1,s_2}}.
\end{align*}
Hence we have
\beq\label{B4}
\begin{split}
B_4=&2\sigma\int_0^t\abs{\inner{e^{\frak{K}t^{\prime}}(\p_yu(\p_yu\cdot\p_y^2u)\mathcal G_2(\p_yu))_\Psi, e^{\frak{K}t^{\prime}}u_\Psi}_{H^{s_1,s_2}}}\mathrm{d}t'\\
\leqslant &C\normm{u_\Psi}_{{L}^\infty_{t}(H^{s_1,s_2})}\normm{\p_yu_\Psi}_{{L}^\infty_{t}(H^{s_1,s_2})}^3\left(\normm{e^{\frak{K}t}\p_yu_\Psi}_{L_t^2(H^{s_1,s_2})}^2+\normm{e^{\frak{K}t}\p_y^2u_\Psi}_{L_t^2(H^{s_1,s_2})}^2\right).
\end{split}
\eeq
\noindent
Similarly, it holds that
\beq\label{B9}
\begin{split}
B_9=&2\sigma\int_0^t\abs{\inner{e^{\frak{K}t^{\prime}}(\p_yu(\p_yu\cdot\p_y^2u)\mathcal G_2(\p_yu))_\Psi, e^{\frak{K}t^{\prime}}\p_y^2u_\Psi}_{H^{s_1,s_2}}}\mathrm{d}t'\\
\leqslant &C\normm{\p_yu_\Psi}_{{L}^\infty_{t}(H^{s_1,s_2})}^4\normm{e^{\frak{K}t}\p_y^2u_\Psi}_{L_t^2(H^{s_1,s_2})}^2.
\end{split}
\eeq

\noindent
$\bullet$ \underline{Estimate on $B_{5}$ and $B_{10}$.}\\ The direct computation implies that
\beq\label{B5}
\begin{split}
B_5=&2\sigma\int_0^t\abs{\inner{e^{\frak{K}t^{\prime}}(\p_yu(\p_yu\cdot\p_y^2u))_\Psi, e^{\frak{K}t^{\prime}}u_\Psi}_{H^{s_1,s_2}}}\mathrm{d}t'\\
\leqslant &C\normm{u_\Psi}_{{L}^\infty_{t}(H^{s_1,s_2})}\normm{\p_yu_\Psi}_{{L}^\infty_{t}(H^{s_1,s_2})}\left(\normm{e^{\frak{K}t}\p_yu_\Psi}_{L_t^2(H^{s_1,s_2})}^2+\normm{e^{\frak{K}t}\p_y^2u_\Psi}_{L_t^2(H^{s_1,s_2})}^2\right).
\end{split}
\eeq
\noindent
Similarly, it holds that
\beq\label{B10}
\begin{split}
B_{10}=&2\sigma\int_0^t\abs{\inner{e^{\frak{K}t^{\prime}}(\p_yu(\p_yu\cdot\p_y^2u))_\Psi, e^{\frak{K}t^{\prime}}\p_y^2u_\Psi}_{H^{s_1,s_2}}}\mathrm{d}t'\\
\leqslant &C\normm{\p_yu_\Psi}_{{L}^\infty_{t}(H^{s_1,s_2})}^2\normm{e^{\frak{K}t}\p_y^2u_\Psi}_{L_t^2(H^{s_1,s_2})}^2.
\end{split}
\eeq

Now Lemma \ref{nonlinear} follows from \eqref{B1}, \eqref{B2}, \eqref{B3}, \eqref{B4}, \eqref{B5}, \eqref{B6}, \eqref{B7}, \eqref{B8}, \eqref{B9} and \eqref{B10}.

\section{Justification for the hydrostatic limit: Proof of Theorem \ref{th1.2}}
In this section, we present the proof of Theorem \ref{th1.2}, that is, the hydrostatic from the re-scaled Oldroyd-B equations \eqref{Oldroyd-B-Scaled1}--\eqref{Oldroyd-B-Scaled3} to the hydrostatic Oldroyd-B equations \eqref{Oldroyd-B-Prandtl1}--\eqref{Oldroyd-B-Prandtl2}. For this purpose, we write
\[
u^R=u^\eps-u,\quad v^R=v^\eps-v, \quad p^R=p^\eps-p, \quad \tau_{ij}^R=\tau_{ij}^\eps-\tau_{ij},\quad 1\leqslant i,j\leqslant3,
\]
where $u^R=(u_1^R,u_2^R)$ and $v^R$ are the tangential and normal error of velocities, respectively. Then recalling $\tau_{ij}^\eps=\tau_{ji}^\eps$ and $\tau_{ij}=\tau_{ji}$, we get, by a direct computation, that $(u^R,v^R,p^R)$ verifies
\beq
\left\{
\begin{split}
&\p_tu^R+u^R\cdot\nabla_x u+v^R\p_yu+u^\eps\cdot\nabla_x u^R+v^\eps\p_yu^R-\theta\Delta_{\eps}u^R+\nabla_x p^R\\
&\qquad\qquad=
\begin{pmatrix}
 \eps\p_{x_1}\tau_{11}^\eps+\eps\p_{x_2}\tau_{12}^\eps+\p_y\tau_{13}^R\\[6pt]
 \eps\p_{x_1}\tau_{21}^\eps+\eps\p_{x_2}\tau_{22}^\eps+\p_y\tau_{23}^R
\end{pmatrix}+\theta\eps^2\Delta_xu,\\
&\eps^2\left(\p_tv^R+u^R\cdot\nabla_xv+v^R\p_yv+u^\eps\cdot\nabla_xv^R+v^\eps\p_yv^R-\theta\Delta_{\eps}v^R\right)+\p_y p^R\\
&\qquad\qquad=\eps\left(\eps\p_{x_1}\tau_{31}^\eps+\eps\p_{x_2}\tau_{32}^\eps+\p_y\tau_{33}^\eps\right)
-\eps^2\left(\p_tv+u\cdot\nabla_xv+v\p_yv-\theta\Delta_{\eps}v\right),\\
&\nabla_x\cdot u^R+\p_yv^R=0,\quad\int_{\mathbb{T}}v^R(t,x,y)\mathrm{d}y=0.
\end{split}
\right.
\eeq
Similarly, we can get the error equations for the stress tensor. More precisely, the diagonal elements for the error of stress tensor $(\tau_{ii}^R)$ $(1\leqslant i\leqslant 3)$ satisfies
\beq
\left\{
\begin{split}
&\f12\eps\p_t\tau_{11}^R+\f12\tau_{11}^R=(1-\theta)\eps\p_{x_1}u_1^\eps-\f12\eps\left(\p_t\tau_{11}+u^\eps\cdot\nabla_x\tau_{11}^\eps+v^\eps\p_y\tau_{11}^\eps\right)\\
&\quad-b\eps\tau_{11}^\eps\p_{x_1}u_1^\eps-\f12\eps\tau_{12}^\eps\left[(b+1)\p_{x_1}u_2^\eps+(b-1)\p_{x_2}u_1^\eps\right]-\f12(b+1)\eps^2\tau_{13}^\eps\p_{x_1}v^\eps,\\
&\quad-\f12(b-1)\left(\tau_{13}^R\p_yu_1^\eps+\tau_{13}\p_yu_1^R\right),\\
&\f12\eps\p_t\tau_{22}^R+\f12\tau_{22}^R=(1-\theta)\eps\p_{x_2}u_2^\eps-\f12\eps\left(\p_t\tau_{22}+u^\eps\cdot\nabla_x\tau_{22}^\eps+v^\eps\p_y\tau_{22}^\eps\right)\\
&\quad-\f12\eps\tau_{12}^\eps\left[(b+1)\p_{x_2}u_1^\eps+(b-1)\p_{x_1}u_2^\eps\right]-b\eps\tau_{22}^\eps\p_{x_2}u_2^\eps-\f12(b+1)\eps^2\tau_{23}^\eps\p_{x_2}v^\eps,\\
&\quad-\f12(b-1)\left(\tau_{23}^R\p_yu_2^\eps+\tau_{23}\p_yu_2^R\right),\\
&\f12\eps\p_t\tau_{33}^R+\f12\tau_{33}^R=(1-\theta)\eps\p_yv^\eps-\f12\eps\left(\p_t\tau_{33}+u^\eps\cdot\nabla_x\tau_{33}^\eps+v^\eps\p_y\tau_{33}^\eps\right)\\
&\quad-\f12(b-1)\eps^2\left[\tau_{13}^\eps\p_{x_1}v^\eps+\tau_{23}^\eps\p_{x_2}v^\eps\right]-b\eps\tau_{33}^\eps\p_yv^\eps,\\
&\quad-\f12(b+1)\left(\tau_{13}^R\p_yu_1^\eps+\tau_{13}\p_yu_1^R+\tau_{23}^R\p_yu_2^\eps+\tau_{23}\p_yu_2^R\right).
\end{split}
\right.
\eeq
The first and the second diagonal elements for the error of stress tensor $(\tau_{ij}^R)$ satisfies
\beq
\small
\left\{
\begin{split}
&\eps\p_t\tau_{12}^R+\tau_{12}^R=(1-\theta)\eps(\p_{x_1}u_2^\eps+\p_{x_2}u_1^\eps)-\eps\left(\p_t\tau_{12}+u^\eps\cdot\nabla_x\tau_{12}^\eps+v^\eps\p_y\tau_{12}^\eps\right)\\
&\quad-\f12\eps\tau_{11}^\eps\left[(b+1)\p_{x_2}u_1^\eps+(b-1)\p_{x_1}u_2^\eps\right]-\f12\eps\tau_{22}^\eps\left[(b+1)\p_{x_1}u_2^\eps+(b-1)\p_{x_2}u_1^\eps\right]\\
&\quad-\f12(b+1)\eps^2\left[\tau_{13}^\eps\p_{x_2}v^\eps+\tau_{23}^\eps\p_{x_1}v^\eps\right]-b\eps\tau_{12}^\eps(\p_{x_1}u_1^\eps+\p_{x_2}u_2^\eps),\\
&\quad-\f12(b-1)\left(\tau_{13}^R\p_yu_2^\eps+\tau_{13}\p_yu_2^R+\tau_{23}^R\p_yu_1^\eps+\tau_{23}\p_yu_1^R\right),\\
&\eps\p_t\tau_{13}^R+\tau_{13}^R=(1-\theta)(\p_yu_1^R+\eps^2\p_{x_1}v^\eps)-\eps\left(\p_t\tau_{13}+u^\eps\cdot\nabla_x\tau_{13}^\eps+v^\eps\p_y\tau_{13}^\eps\right)\\
&\quad-b\eps\tau_{13}^\eps(\p_{x_1}u_1^\eps+\p_yv^\eps)-\f12\eps\tau_{23}^\eps\left[(b+1)\p_{x_1}u_2^\eps+(b-1)\p_{x_2}u_1^\eps\right]\\
&\quad-\f12(b-1)\eps^2\left[\tau_{11}^\eps\p_{x_1}v^\eps+\tau_{12}^\eps\p_{x_2}v^\eps\right]-\f12(b+1)\eps^2\tau_{33}^\eps\p_{x_1}v^\eps,\\
&\quad-\f12(b+1)\left(\tau_{11}^R\p_yu_1^\eps+\tau_{11}\p_yu_1^R+\tau_{12}^R\p_yu_2^\eps+\tau_{12}\p_yu_2^R\right)-\f12(b-1)\left(\tau_{33}^R\p_yu_1^\eps+\tau_{33}\p_yu_1^R\right),\\
&\eps\p_t\tau_{23}^R+\tau_{23}^R=(1-\theta)(\p_yu_2^R+\eps^2\p_{x_2}v^\eps)-\eps\left(\p_t\tau_{23}+u^\eps\cdot\nabla_x\tau_{23}^\eps+v^\eps\p_y\tau_{23}^\eps\right)\\
&\quad-b\eps\tau_{23}^\eps(\p_{x_2}u_2^\eps+\p_yv^\eps)-\f12\eps\tau_{13}^\eps\left[(b+1)\p_{x_2}u_1^\eps+(b-1)\p_{x_1}u_2^\eps\right]\\
&\quad-\f12(b-1)\eps^2\left[\tau_{22}^\eps\p_{x_2}v^\eps+\tau_{12}^\eps\p_{x_1}v^\eps\right]-\f12(b+1)\eps^2\tau_{33}^\eps\p_{x_2}v^\eps,\\
&\quad-\f12(b+1)\left(\tau_{22}^R\p_yu_2^\eps+\tau_{22}\p_yu_2^R+\tau_{12}^R\p_yu_1^\eps+\tau_{12}\p_yu_1^R\right)-\f12(b-1)\left(\tau_{33}^R\p_yu_2^\eps+\tau_{33}\p_yu_2^R\right).
\end{split}
\right.
\eeq
Then to establish the hydrostatic limit result, let $\widetilde\lambda\geqslant\lambda$ be a large constant which will be determined later, we define 
\beqo
f_{\Phi}:=\cF^{-1}(e^{\Phi}\widehat{f}(t,\xi,k)),\quad \Phi(t,\xi,k):=\f13(a-\widetilde\lambda\zeta(t))(1+|\xi|),
\eeqo
with
\begin{equation*}
\zeta(t):=\int_0^t\normm{e^{\frac{a}{2}\langle D_x\rangle}(\eps\nabla_x,\p_y)u^\eps(t')}_{H^{s_1,s_2}}+\normm{e^{\frac{a}{2}\langle D_x\rangle}\p_yu(t')}_{H^{s_1,s_2}}+\normm{e^{\frac{a}{2}\langle D_x\rangle}\p_y^2u(t')}_{H^{s_1,s_2}}\mathrm{d}t'.
\end{equation*}
Under the assumptions in Theorem \ref{th1.2}, we find that if $c_1$ enough small, then there holds (See also \eqref{psilow})
\beq\label{upperlow}
\f14a(1+|\xi|)\leqslant\Phi(t,\xi,k)\leqslant\f13\Psi(t,\xi,k)\leqslant\f13a(1+|\xi|)<\f12a(1+|\xi|).
\eeq
With the above preparations, mimic the proof of \eqref{inte-u+pyu-1}, we get
\beq\label{inte-diff}
\begin{split}
&\f12\normm{(u_{\Phi}^R,\eps v_{\Phi}^R)}_{L_t^\infty H^{s_1-1,s_2-1}}^2+\theta\normm{(\eps\nabla_x,\p_y)(u_{\Phi}^R,\eps v_{\Phi}^R)}_{L_t^2H^{s_1-1,s_2-1}}^2\\
&+\f12\normm{\sqrt{\eps}((\tau_{ij}^R)_{\Phi})}_{L_t^\infty H^{s_1-1,s_2-1}}^2+\f12\normm{((\tau_{ii}^R)_{\Phi})}_{L_t^2H^{s_1-1,s_2-1}}^2+\normm{((\tau_{ij}^R)_{i\neq j})_{\Phi}}_{L_t^2H^{s_1-1,s_2-1}}^2\\
&+\widetilde\lambda\left(\normm{(u_{\Phi}^R,\eps v_{\Phi}^R)}_{{L}^2_{t,\dot{\zeta}(t)}(H^{s_1-\f12,s_2-1})}^2+\normm{\sqrt{\eps}((\tau_{ij}^R)_{\Phi})}_{{L}^2_{t,\dot{\zeta}(t)}(H^{s_1-\f12,s_2-1})}^2\right)\\
\leqslant &K_1+\cdots+K_{16},
\end{split}
\eeq
where we used the notation $\tau_{ii}^R=(\tau_{11},\tau_{22},\tau_{33})$ and $(\tau_{ij}^R)_{i\neq j}=(\tau_{12},\tau_{13},\tau_{23})$ for brevity. Similar for $\tau_{ij}^R$.
\begin{align*}
K_1=&-\int_0^t\inner{(u^R\cdot\nabla_x u+v^R\p_yu)_\Phi, u_\Phi^R}_{H^{s_1-1,s_2-1}}\mathrm{d}t'\\
&-\eps^2\int_0^t\inner{(u^R\cdot\nabla_x v+v^R\p_yv)_\Phi, v_\Phi^R}_{H^{s_1-1,s_2-1}}\mathrm{d}t',\\
K_2=&-\int_0^t\inner{(u^\eps\cdot\nabla_x u^R+v^\eps\p_yu^R)_\Phi, u_\Phi^R}_{H^{s_1-1,s_2-1}}\mathrm{d}t'\\
&-\eps^2\int_0^t\inner{(u^\eps\cdot\nabla_x v^R+v^\eps\p_yv^R)_\Phi, v_\Phi^R}_{H^{s_1-1,s_2-1}}\mathrm{d}t',\\
K_3=&-\f12(b-1)\int_0^t\inner{\left(\tau_{13}^R\p_yu_1^\eps+\tau_{13}\p_yu_1^R\right)_\Phi, (\tau_{11}^R)_\Phi}_{H^{s_1-1,s_2-1}}\mathrm{d}t'\\
&-\f12(b-1)\int_0^t\inner{\left(\tau_{23}^R\p_yu_2^\eps+\tau_{23}\p_yu_2^R\right)_\Phi, (\tau_{22}^R)_\Phi}_{H^{s_1-1,s_2-1}}\mathrm{d}t'\\
&-\f12(b+1)\int_0^t\inner{\left(\tau_{13}^R\p_yu_1^\eps+\tau_{13}\p_yu_1^R+\tau_{23}^R\p_yu_2^\eps+\tau_{23}\p_yu_2^R\right)_\Phi, (\tau_{33}^R)_\Phi}_{H^{s_1-1,s_2-1}}\mathrm{d}t',\\
K_4=&-\f12(b-1)\int_0^t\inner{\left(\tau_{13}^R\p_yu_2^\eps+\tau_{13}\p_yu_2^R+\tau_{23}^R\p_yu_1^\eps+\tau_{23}\p_yu_1^R\right)_\Phi, (\tau_{12}^R)_\Phi}_{H^{s_1-1,s_2-1}}\mathrm{d}t'\\
&-\f12(b+1)\int_0^t\inner{\left(\tau_{11}^R\p_yu_1^\eps+\tau_{11}\p_yu_1^R+\tau_{12}^R\p_yu_2^\eps+\tau_{12}\p_yu_2^R\right)_\Phi, (\tau_{13}^R)_\Phi}_{H^{s_1-1,s_2-1}}\mathrm{d}t'\\
&-\f12(b-1)\int_0^t\inner{\left(\tau_{33}^R\p_yu_1^\eps+\tau_{33}\p_yu_1^R\right)_\Phi, (\tau_{13}^R)_\Phi}_{H^{s_1-1,s_2-1}}\mathrm{d}t'\\
&-\f12(b+1)\int_0^t\inner{\left(\tau_{22}^R\p_yu_2^\eps+\tau_{22}\p_yu_2^R+\tau_{12}^R\p_yu_1^\eps+\tau_{12}\p_yu_1^R\right)_\Phi, (\tau_{23}^R)_\Phi}_{H^{s_1-1,s_2-1}}\mathrm{d}t'\\
&-\f12(b-1)\int_0^t\inner{\left(\tau_{33}^R\p_yu_2^\eps+\tau_{33}\p_yu_2^R\right)_\Phi, (\tau_{23}^R)_\Phi}_{H^{s_1-1,s_2-1}}\mathrm{d}t',\\
K_5=&-\f12\eps\int_0^t\inner{\left(\p_t\tau_{11}+u^\eps\cdot\nabla_x\tau_{11}^\eps+v^\eps\p_y\tau_{11}^\eps\right)_\Phi, (\tau_{11}^R)_\Phi}_{H^{s_1-1,s_2-1}}\mathrm{d}t'\\
&-\f12\eps\int_0^t\inner{\left(\p_t\tau_{22}+u^\eps\cdot\nabla_x\tau_{22}^\eps+v^\eps\p_y\tau_{22}^\eps\right)_\Phi, (\tau_{22}^R)_\Phi}_{H^{s_1-1,s_2-1}}\mathrm{d}t'\\
&-\f12\eps\int_0^t\inner{\left(\p_t\tau_{33}+u^\eps\cdot\nabla_x\tau_{33}^\eps+v^\eps\p_y\tau_{33}^\eps\right)_\Phi, (\tau_{33}^R)_\Phi}_{H^{s_1-1,s_2-1}}\mathrm{d}t',\\
K_6=&-\eps\int_0^t\inner{\left(\p_t\tau_{12}+u^\eps\cdot\nabla_x\tau_{12}^\eps+v^\eps\p_y\tau_{12}^\eps\right)_\Phi, (\tau_{12}^R)_\Phi}_{H^{s_1-1,s_2-1}}\mathrm{d}t'\\
&-\eps\int_0^t\inner{\left(\p_t\tau_{13}+u^\eps\cdot\nabla_x\tau_{13}^\eps+v^\eps\p_y\tau_{13}^\eps\right)_\Phi, (\tau_{13}^R)_\Phi}_{H^{s_1-1,s_2-1}}\mathrm{d}t'\\
&-\eps\int_0^t\inner{\left(\p_t\tau_{23}+u^\eps\cdot\nabla_x\tau_{23}^\eps+v^\eps\p_y\tau_{23}^\eps\right)_\Phi, (\tau_{23}^R)_\Phi}_{H^{s_1-1,s_2-1}}\mathrm{d}t',\\
K_7=&-b\eps\int_0^t\inner{(\tau_{11}^\eps\p_{x_1}u_1^\eps)_\Phi, (\tau_{11}^R)_\Phi}_{H^{s_1-1,s_2-1}}+\inner{(\tau_{22}^\eps\p_{x_2}u_2^\eps)_\Phi, (\tau_{22}^R)_\Phi}_{H^{s_1-1,s_2-1}}\mathrm{d}t'\\
&-b\eps\int_0^t\inner{(\tau_{33}^\eps\p_yv^\eps)_\Phi, (\tau_{33}^R)_\Phi}_{H^{s_1-1,s_2-1}}\mathrm{d}t',\\
K_8=&-b\eps\int_0^t\inner{(\tau_{12}^\eps(\p_{x_1}u_1^\eps+\p_{x_2}u_2^\eps))_\Phi, (\tau_{12}^R)_\Phi}_{H^{s_1-1,s_2-1}}\mathrm{d}t'\\
&-b\eps\int_0^t\inner{(\tau_{13}^\eps(\p_{x_1}u_1^\eps+\p_yv^\eps))_\Phi, (\tau_{13}^R)_\Phi}_{H^{s_1-1,s_2-1}}\mathrm{d}t'\\
&-b\eps\int_0^t\inner{(\tau_{23}^\eps(\p_{x_2}u_2^\eps+\p_yv^\eps))_\Phi, (\tau_{23}^R)_\Phi}_{H^{s_1-1,s_2-1}}\mathrm{d}t',\\
K_9=&-\f12\eps\int_0^t\inner{(\tau_{12}^\eps\left[(b+1)\p_{x_1}u_2^\eps+(b-1)\p_{x_2}u_1^\eps\right])_\Phi, (\tau_{11}^R)_\Phi}_{H^{s_1-1,s_2-1}}\mathrm{d}t'\\
&-\f12\eps\int_0^t\inner{(\tau_{12}^\eps\left[(b+1)\p_{x_2}u_1^\eps+(b-1)\p_{x_1}u_2^\eps\right])_\Phi, (\tau_{22}^R)_\Phi}_{H^{s_1-1,s_2-1}}\mathrm{d}t',\\
K_{10}=&-\f12\eps\int_0^t\inner{(\tau_{11}^\eps\left[(b+1)\p_{x_2}u_1^\eps+(b-1)\p_{x_1}u_2^\eps\right])_\Phi, (\tau_{12}^R)_\Phi}_{H^{s_1-1,s_2-1}}\mathrm{d}t'\\
&-\f12\eps\int_0^t\inner{(\tau_{22}^\eps\left[(b+1)\p_{x_1}u_2^\eps+(b-1)\p_{x_2}u_1^\eps\right])_\Phi, (\tau_{12}^R)_\Phi}_{H^{s_1-1,s_2-1}}\mathrm{d}t'\\
&-\f12\eps\int_0^t\inner{(\tau_{23}^\eps\left[(b+1)\p_{x_1}u_2^\eps+(b-1)\p_{x_2}u_1^\eps\right])_\Phi, (\tau_{13}^R)_\Phi}_{H^{s_1-1,s_2-1}}\mathrm{d}t'\\
&-\f12\eps\int_0^t\inner{(\tau_{13}^\eps\left[(b+1)\p_{x_2}u_1^\eps+(b-1)\p_{x_1}u_2^\eps\right])_\Phi, (\tau_{23}^R)_\Phi}_{H^{s_1-1,s_2-1}}\mathrm{d}t',\\
K_{11}=&-\f12(b+1)\eps^2\int_0^t\inner{(\tau_{13}^\eps\p_{x_1}v^\eps)_\Phi, (\tau_{11}^R)_\Phi}_{H^{s_1-1,s_2-1}}+\inner{(\tau_{23}^\eps\p_{x_2}v^\eps)_\Phi, (\tau_{22}^R)_\Phi}_{H^{s_1-1,s_2-1}}\mathrm{d}t'\\
&-\f12(b-1)\eps^2\int_0^t\inner{(\tau_{13}^\eps\p_{x_1}v^\eps+\tau_{23}^\eps\p_{x_2}v^\eps)_\Phi, (\tau_{33}^R)_\Phi}_{H^{s_1-1,s_2-1}}\mathrm{d}t',\\
K_{12}=&-\f12(b+1)\eps^2\int_0^t\inner{(\tau_{13}^\eps\p_{x_2}v^\eps+\tau_{23}^\eps\p_{x_1}v^\eps)_\Phi, (\tau_{12}^R)_\Phi}_{H^{s_1-1,s_2-1}}\mathrm{d}t'\\
&-\f12(b-1)\eps^2\int_0^t\inner{(\tau_{11}^\eps\p_{x_1}v^\eps+\tau_{12}^\eps\p_{x_2}v^\eps)_\Phi, (\tau_{13}^R)_\Phi}_{H^{s_1-1,s_2-1}}\mathrm{d}t'\\
&-\f12(b+1)\eps^2\int_0^t\inner{(\tau_{33}^\eps\p_{x_1}v^\eps)_\Phi, (\tau_{13}^R)_\Phi}_{H^{s_1-1,s_2-1}}\mathrm{d}t'\\
&-\f12(b-1)\eps^2\int_0^t\inner{(\tau_{22}^\eps\p_{x_2}v^\eps+\tau_{12}^\eps\p_{x_1}v^\eps)_\Phi, (\tau_{23}^R)_\Phi}_{H^{s_1-1,s_2-1}}\mathrm{d}t'\\
&-\f12(b+1)\eps^2\int_0^t\inner{(\tau_{33}^\eps\p_{x_2}v^\eps)_\Phi, (\tau_{23}^R)_\Phi}_{H^{s_1-1,s_2-1}}\mathrm{d}t',\\
K_{13}=&\eps^2\int_0^t\inner{\theta\Delta_xu_\Phi, u^R_\Phi}_{H^{s_1-1,s_2-1}}-\inner{\left(\p_tv+u\cdot\nabla_xv+v\p_yv-\theta\Delta_{\eps}v\right)_\Phi, v^R_\Phi}_{H^{s_1-1,s_2-1}}\mathrm{d}t',\\
K_{14}=&\eps\int_0^t\inner{(\p_{x_1}\tau_{11}^\eps+\p_{x_2}\tau_{12}^\eps)_\Phi, (u_1^R)_\Phi}_{H^{s_1-1,s_2-1}}+\inner{(\p_{x_1}\tau_{12}^\eps+\p_{x_2}\tau_{22}^\eps)_\Phi, (u_2^R)_\Phi}_{H^{s_1-1,s_2-1}}\mathrm{d}t'\\
&+\eps\int_0^t\inner{(\eps\p_{x_1}\tau_{31}^\eps+\eps\p_{x_2}\tau_{32}^\eps+\p_y\tau_{33}^\eps)_\Phi, v^R_\Phi}_{H^{s_1-1,s_2-1}}\mathrm{d}t',\\
K_{15}=&(1-\theta)\eps\int_0^t\inner{(\p_{x_1}u_1^\eps)_\Phi, (\tau_{11}^R)_\Phi}_{H^{s_1-1,s_2-1}}+\inner{(\p_{x_2}u_2^\eps)_\Phi, (\tau_{22}^R)_\Phi}_{H^{s_1-1,s_2-1}}\mathrm{d}t'\\
&+(1-\theta)\eps\int_0^t\inner{\p_yv^\eps_\Phi, (\tau_{33}^R)_\Phi}_{H^{s_1-1,s_2-1}}+\inner{(\p_{x_1}u_2^\eps+\p_{x_2}u_1^\eps)_\Phi, (\tau_{12}^R)_\Phi}_{H^{s_1-1,s_2-1}}\mathrm{d}t'\\
&+(1-\theta)\eps^2\int_0^t\inner{\p_{x_1}v^\eps_\Phi, (\tau_{13}^R)_\Phi}_{H^{s_1-1,s_2-1}}+\inner{\p_{x_2}v^\eps_\Phi, (\tau_{23}^R)_\Phi}_{H^{s_1-1,s_2-1}}\mathrm{d}t',\\
K_{16}=&\int_0^t\inner{\p_y(\tau_{13}^R)_\Phi, (u_1^R)_\Phi}_{H^{s_1-1,s_2-1}}+\inner{\p_y(\tau_{23}^R)_\Phi, (u_2^R)_\Phi}_{H^{s_1-1,s_2-1}}\mathrm{d}t'\\
&+(1-\theta)\int_0^t\inner{\p_y(u_1^R)_\Phi, (\tau_{13}^R)_\Phi}_{H^{s_1-1,s_2-1}}+\inner{\p_y(u_2^R)_\Phi, (\tau_{23}^R)_\Phi}_{H^{s_1-1,s_2-1}}\mathrm{d}t'.
\end{align*}
To deal with the terms $K_1-K_{16}$, we have to estimate the time derivative of $u$ and $\tau$ (See $K_{5}$, $K_{6}$ and $K_{13}$). In fact, we have the following auxiliary lemma:
\begin{mylem}\label{time-tau}
Under the assumptions in Theorem \ref{th1.2}, it holds that
\begin{align*}
\normm{e^{\frak{K}t}e^{\frac{a}{2}\langle D_x\rangle}(\p_tu,\p_t\p_yu)}_{L_t^2H^{s_1-1,s_2-1}}+\normm{e^{\frac{a}{2}\langle D_x\rangle}\p_t\tau_{ij}}_{L_t^2 H^{s_1-1,s_2-1}}\leqslant C\normm{e^{a\langle D_x\rangle}(u_0,\p_yu_0)}_{H^{s_1,s_2}}
\end{align*}	
for any $1\leqslant i, j\leqslant3$, where the constant $C$ is independent of $\eps$.
\end{mylem}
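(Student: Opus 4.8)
The plan is to read off $\p_tu$, $\p_t\p_yu$ and $\p_t\tau_{ij}$ algebraically from the hydrostatic Oldroyd-B system \eqref{Oldroyd-B-Prandtl}--\eqref{Oldroyd-B-Prandtl2} (equivalently from \eqref{HOB-u}--\eqref{HOB-pyu}) and to bound every resulting term by quantities already controlled in Theorem \ref{th1.1}. Two facts will be used throughout: by \eqref{psilow} the weight $e^{\frac a2\langle D_x\rangle}$ is dominated by $e^{\Psi}$, so every norm of $u_\Psi,\p_yu_\Psi,\p_y^2u_\Psi$ appearing in \eqref{energy inequality} is available (and $e^{\frak{K}t}\geqslant1$, so the $e^{\frak{K}t}$-free norm of $\p_t\tau_{ij}$ is controlled by its $e^{\frak{K}t}$-weighted version); and passing from $H^{s_1,s_2}$ to $H^{s_1-1,s_2-1}$ gains one full derivative, both spaces being algebras on $\Omega$ since $s_1>\f52$ and $s_2>\f32$. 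The constant $C$ will be $\eps$-independent automatically, since nothing $\eps$-dependent enters \eqref{Oldroyd-B-Prandtl}.

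\emph{Pressure and velocity.} Because $\mathcal F=\p_y\big(\p_yu\,\mathcal G_1(\p_yu)\big)$ is a perfect $\p_y$-derivative, integrating the $u$-equation of \eqref{Oldroyd-B-Prandtl} over $\mathbb{T}$ and using $\int_{\mathbb{T}}u\,\mathrm{d}y=\int_{\mathbb{T}}v\,\mathrm{d}y=0$, $\p_yv=-\nabla_x\cdot u$ and periodicity in $y$ gives the closed formula $\nabla_xp=-\nabla_x\cdot\int_{\mathbb{T}}(u\otimes u)\,\mathrm{d}y$; by Lemma \ref{lemma-product} and \eqref{psilow} this yields $\normm{e^{\frak{K}t}e^{\frac a2\langle D_x\rangle}\nabla_xp}_{L_t^2H^{s_1-1,s_2-1}}\lesssim\normm{u_\Psi}_{L_t^\infty H^{s_1,s_2}}\normm{e^{\frak{K}t}\p_yu_\Psi}_{L_t^2H^{s_1,s_2}}$, and $\p_y\nabla_xp=\nabla_x\p_yp=0$, so no pressure survives in the $\p_yu$-equation. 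From \eqref{HOB-u}--\eqref{HOB-pyu} we then write $\p_tu$ and $\p_t\p_yu$ as sums of: transport/stretching terms ($u\cdot\nabla_xu$, $v\p_yu$, $u\cdot\nabla_x\p_yu$, $v\p_y^2u$, $\p_yu\cdot\nabla_xu$, $\p_yv\,\p_yu$), bounded by the same product-law estimates as $B_1,B_2,B_6,B_7$ in Lemma \ref{nonlinear} (using the Poincar\'e inequality \eqref{Poincare} to control $v$); the linear terms $\p_y^2u,\p_y^3u$, which merely lose one $\p_y$ and are bounded by $\normm{e^{\frak{K}t}\p_y^2u_\Psi}_{L_t^2H^{s_1,s_2}}$; the already-treated $\nabla_xp$; and the fully nonlinear $(1-\theta)\mathcal F$, $(1-\theta)\p_y\mathcal F$. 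For the last we retain the divergence form: with $g:=\p_yu\,\mathcal G_1(\p_yu)$ one has $\mathcal F=\p_yg$, $\p_y\mathcal F=\p_y^2g$, hence $\normm{\p_y^j\mathcal F}_{H^{s_1-1,s_2-1}}\lesssim\normm{\p_yg}_{H^{s_1,s_2}}$ for $j=0,1$, and $\p_yg=\p_y^2u\,\mathcal G_1(\p_yu)-2\sigma\p_yu(\p_yu\cdot\p_y^2u)\big(1+\mathcal G_2(\p_yu)\big)$ is estimated by Lemma \ref{lemma-product} together with the composition bound $\normm{(\mathcal G_k(\p_yu))_\Psi}_{H^{s_1,s_2}}\lesssim\normm{\p_yu_\Psi}^2_{H^{s_1,s_2}}$ of Lemma \ref{lemma-para-linearization-psi} (licit since \eqref{time-theta} keeps $\normm{\p_yu_\Psi}^2_{L_t^\infty H^{s_1,s_2}}$ inside the holomorphy disc), exactly as for $B_3$--$B_{10}$. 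Summing and invoking \eqref{energy inequality} — with the smallness $\normm{e^{a\langle D_x\rangle}(u_0,\p_yu_0)}_{H^{s_1,s_2}}\leqslant c_1a$ used to absorb the super-linear factors — bounds $\normm{e^{\frak{K}t}e^{\frac a2\langle D_x\rangle}(\p_tu,\p_t\p_yu)}_{L_t^2H^{s_1-1,s_2-1}}$ by the right-hand side.

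\emph{Stress.} By \eqref{tau13+tau23} and \eqref{Oldroyd-B-Prandtl2}$_1$--\eqref{Oldroyd-B-Prandtl2}$_4$ each $\tau_{ij}$ is a fixed function of $\p_yu$, real-analytic near $0$; writing $\f{1}{1+\sigma|\p_yu|^2}=1+\mathcal G_1(\p_yu)$ one computes $\p_t\tau_{ij}=c_{ij}\,\p_t\p_yu_k+R_{ij}$, where $c_{ij}$ are constants (nonzero only for $\tau_{13},\tau_{23}$) and $R_{ij}$ is a sum of terms of the form (a polynomial in $\p_yu$, each monomial of degree $\geqslant2$ or carrying a factor $\mathcal G_k(\p_yu)$) times $\p_t\p_yu$. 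Applying Lemma \ref{lemma-product}, the composition bound of Lemma \ref{lemma-para-linearization-psi}, and then the velocity estimate above gives $\normm{e^{\frac a2\langle D_x\rangle}\p_t\tau_{ij}}_{L_t^2H^{s_1-1,s_2-1}}\lesssim\big(1+\normm{\p_yu_\Psi}^m_{L_t^\infty H^{s_1,s_2}}\big)\normm{e^{\frak{K}t}e^{\frac a2\langle D_x\rangle}\p_t\p_yu}_{L_t^2H^{s_1-1,s_2-1}}$ for some $m$, which closes the proof.

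\emph{Main obstacle.} The one delicate point is $\p_y\mathcal F$: expanding $\mathcal F$ naively before differentiating produces terms with two factors of $\p_y^2u$, of which only $L_t^2$-in-time control is available, so the estimate would not close. This is circumvented precisely by retaining the divergence structure $\mathcal F=\p_y g$ and differentiating only the lower-order quantity $g=\p_yu\,\mathcal G_1(\p_yu)$, after which the outer $\p_y$'s cost regularity but not integrability. Everything else is a routine repetition of the $B_j$-estimates from the proof of Lemma \ref{nonlinear}.
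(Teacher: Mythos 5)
Your proposal is correct, and for the stress part it coincides with the paper's own argument: differentiate the closed formulas \eqref{tau13+tau23} and the algebraic relations \eqref{Oldroyd-B-Prandtl2}$_1$--\eqref{Oldroyd-B-Prandtl2}$_4$ in time, apply Lemmas \ref{lemma-product}--\ref{lemma-para-linearization-psi}, and reduce everything to the bound on $\p_t\p_yu$.

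For the velocity part the routes genuinely differ. You read $\p_tu$ and $\p_t\p_yu$ off the equations by the triangle inequality, which forces you to (i) produce the explicit representation $\nabla_xp=-\nabla_x\cdot\int_{\mathbb T}(u\otimes u)\,\mathrm{d}y$ for the pressure and (ii) bound $\p_y^3u$ directly, using that the target space $H^{s_1-1,s_2-1}$ is one $y$-derivative weaker than $H^{s_1,s_2}$. The paper instead tests \eqref{HOB-pyu} against $e^{2\frak{K}t}e^{a\langle D_x\rangle}\p_t\p_yu$: the dissipative term $-\p_y^3u$ then integrates by parts into $\f12\f{d}{dt}\normm{e^{\frak{K}t}e^{\frac a2\langle D_x\rangle}\p_y^2u}^2_{H^{s_1-1,s_2-1}}$, so the third derivative is never estimated, and $\p_tu$ is recovered from $\p_t\p_yu$ by the Poincar\'e inequality \eqref{Poincare} (since $\int_{\mathbb T}\p_tu\,\mathrm{d}y=0$), so the pressure is never estimated either. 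Both arguments close; yours is more elementary and explicit, the paper's is shorter and bypasses the pressure entirely while yielding the extra bound on $\normm{\p_y^2u}_{L_t^\infty H^{s_1-1,s_2-1}}$ as a by-product. Finally, the ``main obstacle'' you isolate --- keeping $\p_y\mathcal F=\p_y^2\big(\p_yu\,\mathcal G_1(\p_yu)\big)$ in divergence form so that only a single factor of $\p_y^2u$ (and no $\p_y^3u$) appears --- is precisely the structural fact underlying the paper's stated estimate $\normm{e^{\frak{K}t}e^{\frac a2\langle D_x\rangle}\p_y\mathcal F}_{L_t^2H^{s_1-1,s_2-1}}^2\leqslant C(\cdots)\normm{e^{\frak{K}t}e^{\frac a2\langle D_x\rangle}\p_y^2u}_{L_t^2H^{s_1-1,s_2}}^2$, so you have correctly identified and resolved the one nontrivial point.
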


Now with the help of Lemma \ref{time-tau}, the right hand of \eqref{inte-diff} can be estimated by the following lemma. More precisely, we have
\begin{mylem}\label{K1+cdots+K16}
Under the assumptions in Theorem \ref{th1.2}, it holds that
\begin{align*}
\sum_{i=1}^{16}K_i\leqslant &C\eps^2+\frac{3}{4}\theta\normm{(\eps\nabla_x,\p_y)(u_\Phi^R,\eps v_\Phi^R)}_{L_t^2H^{s_1-1,s_2-1}}^2+\frac{3}{8}\normm{((\tau_{ii}^R)_{\Phi})}_{L_t^2H^{s_1-1,s_2-1}}^2\\
&+\frac{3}{4}\normm{((\tau_{ij}^R)_{i\neq j})_{\Phi}}_{L_t^2H^{s_1-1,s_2-1}}^2+C\normm{(u_{\Phi}^R,\eps v_{\Phi}^R)}_{{L}^2_{t,\dot{\zeta}(t)}(H^{s_1-\f12,s_2-1})}^2,
\end{align*}	
where the constant depend on $s_1,s_2$, $\theta$, $\mathcal G_1$ and $\mathcal G_2$ but independent of $\eps$.
\end{mylem}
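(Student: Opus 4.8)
The plan is to estimate $K_1,\dots,K_{16}$ separately and then sum. Throughout I would use the same devices as in Section~2: the convexity of the phase $\Phi$ (legitimate as long as $a-\widetilde\lambda\zeta(t)\geqslant0$, which by \eqref{assump-eps}, \eqref{upperlow} and a bootstrap holds on the whole interval under consideration) together with the bilinear estimate of Lemma~\ref{lemma-product} to move the multiplier $e^{\Phi}$ onto each factor; the algebra property of $H^{s_1-1,s_2-1}$, available precisely because Theorem~\ref{th1.2} raises the indices to $s_1>\f52$, $s_2>\f32$; the three divergence-free relations with the Poincar\'e inequality to trade the normal velocities $v,v^R,v^\eps$ (and their $\p_y$) for horizontal derivatives of the tangential parts; the composition estimate of Lemma~\ref{lemma-para-linearization-psi} for the holomorphic maps $\mathcal G_1,\mathcal G_2$ to control the limit stress through \eqref{energy inequality}; the a priori bounds \eqref{energy inequality} and \eqref{assump-eps} for all factors built from $(u,\tau)$ or $(u^\eps,\tau^\eps)$, which is legitimate since $\Phi\leqslant\f13\Psi\leqslant\f a2(1+|\xi|)$ by \eqref{upperlow}; and Lemma~\ref{time-tau} for the time-derivative factors entering $K_5$, $K_6$ and $K_{13}$.

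The sixteen terms then fall into three groups. First, those carrying an explicit power of $\eps$ --- essentially $K_5$ through $K_{15}$ plus the $O(\eps^2)$ pieces of $K_1$: after the product law each has the form $\eps\int_0^t AB\,\mathrm{d}t'$ (or with $\eps^2$), where $A$ is an $O(1)$, time-integrable norm of the re-scaled or limit solution supplied by \eqref{energy inequality}--\eqref{assump-eps} and $B$ is an error norm that, possibly after Poincar\'e, appears in the dissipation/damping on the left of \eqref{inte-diff}; then $\eps AB\leqslant C\eps^2A^2+\delta B^2$ produces $C\eps^2$ plus an absorbable remainder. The only care here is to rewrite $\eps^2\|v^R\|=\eps\|\eps v^R\|$, $\eps^2\|\nabla_xv^\eps\|=\eps\|\eps\nabla_xv^\eps\|$ and the like, so that $v^\eps$, $\nabla_xu^\eps$, $\nabla_xv^\eps$ always enter through the $\eps$-stable quantities $\eps v^\eps$, $\eps\nabla_xu^\eps$, $\eps\nabla_xv^\eps$ controlled by $\zeta$ or \eqref{assump-eps}, and for $K_{14}$ to integrate $\nabla_x$ by parts onto $u^R$ first.

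Second, the $\eps$-free terms: the $\eps$-free part of $K_1$ and the quadratic error couplings $K_3$, $K_4$. In each product one factor is small --- either $O(c_1a)$ by \eqref{initial}--\eqref{energy inequality} (as for $K_3$, $K_4$ and $u^R\cdot\nabla_xu$, where the damping of $\tau^R$ and the velocity dissipation absorb the rest once $c_1$ is chosen small depending on $\theta$), or of $\dot\zeta(t)$-type (as for the $v^R$-convective part $v^R\p_yu$ of $K_1$), in which case, after trading a half horizontal derivative by duality and integrating in time, one gets $C\normm{(u_{\Phi}^R,\eps v_{\Phi}^R)}_{{L}^2_{t,\dot\zeta(t)}(H^{s_1-\f12,s_2-1})}^2$, which is the last term of the asserted bound (and which, later in the proof of Theorem~\ref{th1.2}, is absorbed by the $\widetilde\lambda$-term on the left of \eqref{inte-diff} once $\widetilde\lambda$ is large). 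Third, the distinguished term $K_{16}$: since $\Phi$ is independent of the vertical frequency, $e^{\Phi}$ commutes with $\p_y$, so integrating by parts in $y$ in the first integral of $K_{16}$ and using the symmetry of the $H^{s_1-1,s_2-1}$ inner product, the coefficients collapse to
\[
K_{16}=-\theta\int_0^t\Big(\inner{(\tau_{13}^R)_\Phi,\p_y(u_1^R)_\Phi}_{H^{s_1-1,s_2-1}}+\inner{(\tau_{23}^R)_\Phi,\p_y(u_2^R)_\Phi}_{H^{s_1-1,s_2-1}}\Big)\mathrm{d}t',
\]
so that Cauchy--Schwarz and Young give $|K_{16}|\leqslant\f\theta2\normm{\p_yu_\Phi^R}_{L_t^2H^{s_1-1,s_2-1}}^2+\f12\normm{((\tau_{ij}^R)_{i\neq j})_\Phi}_{L_t^2H^{s_1-1,s_2-1}}^2$, using $\theta<1$. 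Since $\f\theta2<\f34\theta$ and $\f12<\f34$, room is left for the contributions of the first two groups, and summing all sixteen bounds --- collecting the $C\eps^2$ terms, the absorbable fractions of the dissipation/damping, and the $\dot\zeta$-weighted piece --- gives the claimed inequality.

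The main obstacle is the genuinely convective term $K_2$, the transport of the error by the full re-scaled field $(u^\eps,v^\eps)$: a crude estimate loses a power of $\eps$ because only $\eps\nabla_xu^\eps$ and $\eps v^\eps$, not $\nabla_xu^\eps$ or $v^\eps$, are uniformly bounded. The remedy is to keep $u^\eps\cdot\nabla_x+v^\eps\p_y$ intact so that its $L^2$-pairing with the solution vanishes by $\nabla_x\cdot u^\eps+\p_yv^\eps=0$, to substitute $v^\eps=-\int_0^y\nabla_x\cdot u^\eps$ so that the $y$-integration keeps the horizontal differentiation order low, and to commute $e^{\Phi}$ through the transport operator --- the commutator gaining half a horizontal derivative at the price of a factor $\dot\zeta(t)$, which is exactly why $\widetilde\lambda\normm{(u_{\Phi}^R,\eps v_{\Phi}^R)}_{{L}^2_{t,\dot\zeta(t)}(H^{s_1-\f12,s_2-1})}^2$ sits on the left of \eqref{inte-diff}. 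One must also verify that, for the normal component, the weak $\eps^2$-weighted dissipation $\theta\normm{(\eps\nabla_x,\p_y)(\eps v_{\Phi}^R)}_{L_t^2H^{s_1-1,s_2-1}}^2$ still suffices to absorb the remainders; matching each remainder with the right norm and the right power of $\eps$ is the delicate bookkeeping at the heart of the argument.
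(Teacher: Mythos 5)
Your plan follows essentially the same route as the paper's proof: term-by-term estimates via the product law of Lemma \ref{lemma-product}, the divergence-free relations combined with the Poincar\'e inequality, Lemma \ref{time-tau} for the time-derivative factors in $K_5$, $K_6$, $K_{13}$, Young's inequality converting each $\eps$-weighted product into $C\eps^2$ plus an absorbable fraction of the dissipation or damping, the $\dot\zeta$-weighted norm for the convective couplings in $K_1$ and $K_2$, and the identical integration by parts collapsing $K_{16}$ to $-\theta\int_0^t\inner{\p_y(u_i^R)_\Phi,(\tau_{i3}^R)_\Phi}\,\mathrm{d}t'$. The one caveat is your parenthetical for $K_{14}$: integrating $\nabla_x$ by parts onto $u^R$ there trades $\eps\normm{\p_x\tau^\eps}\normm{u^R}$ for $\normm{\tau^\eps}\normm{\eps\nabla_xu^R}$ and loses the factor of $\eps$ needed for the $O(\eps^2)$ rate (since $\tau^\eps$ is only $O(1)$ in $L_t^2H^{s_1,s_2}$); the paper instead keeps the horizontal derivative on $\tau^\eps$ and applies Poincar\'e to $u^R$ (integrating by parts only in the $\p_y\tau_{33}^\eps\cdot v^R$ pairing), which is exactly what your stated general recipe for that group delivers.
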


The proof of Lemmas \ref{time-tau}--\ref{K1+cdots+K16} will be presented in the end of this section. 

\subsection{Proof of Lemmas \ref{time-tau}--\ref{K1+cdots+K16}}
In this subsection, we present the proof of Lemmas \ref{time-tau}--\ref{K1+cdots+K16}.
\begin{proof}[Proof of Lemma \ref{time-tau}]
We first estimate $\p_tu$. Taking the $H^{s_1-1,s_2-1}(\Omega)$ inner product of \eqref{HOB-pyu} with $e^{2\frak{K}t}e^{a\langle D_x\rangle}\p_t\p_yu$, then there holds
\begin{align*}
&\normm{e^{\frak{K}t}e^{\frac{a}{2}\langle D_x\rangle}\p_t\p_yu}_{H^{s_1-1,s_2-1}}^2+\f12\frac{\mathrm{d}}{\mathrm{d}t}\normm{e^{\frak{K}t}e^{\frac{a}{2}\langle D_x\rangle}\p_y^2u}_{H^{s_1-1,s_2-1}}^2-\frak{K}\normm{e^{\frak{K}t}e^{\frac{a}{2}\langle D_x\rangle}\p_y^2u}_{H^{s_1-1,s_2-1}}^2\\
&+e^{2\frak{K}t}\inner{e^{\frac{a}{2}\langle D_x\rangle}(u\cdot\nabla_x\p_yu+v\p_y^2u), e^{\frac{a}{2}\langle D_x\rangle}\p_t\p_yu}_{H^{s_1-1,s_2-1}}\\
&+e^{2\frak{K}t}\inner{e^{\frac{a}{2}\langle D_x\rangle}(\p_yu\cdot\nabla_xu+\p_yv\p_yu), e^{\frac{a}{2}\langle D_x\rangle}\p_t\p_yu}_{H^{s_1-1,s_2-1}}\\
=&(1-\theta)e^{2\frak{K}t}\inner{e^{\frac{a}{2}\langle D_x\rangle}\p_y\mathcal F, e^{\frac{a}{2}\langle D_x\rangle}\p_t\p_yu}_{H^{s_1-1,s_2-1}},
\end{align*}
where we used the fact that $e^{\frac{a}{2}\langle D_x\rangle}(\p_t\p_yu)=\p_t(e^{\frac{a}{2}\langle D_x\rangle}\p_yu)$.
Hence integrating the resulting equality over $[0,t]$, we get
\beq\label{ptpyu}
\begin{split}
&\f12\normm{e^{\frak{K}t}e^{\frac{a}{2}\langle D_x\rangle}\p_t\p_yu}_{L_t^2H^{s_1-1,s_2-1}}^2+\f12\normm{e^{\frak{K}t}e^{\frac{a}{2}\langle D_x\rangle}\p_y^2u}_{L_t^\infty H^{s_1-1,s_2-1}}^2\\
\leqslant &\f12\normm{e^{a\langle D_x\rangle}\p_y^2u_{0}}_{H^{s_1-1,s_2-1}}^2+\frak{K}\normm{e^{\frak{K}t}e^{\frac{a}{2}\langle D_x\rangle}\p_y^2u}_{L_t^2H^{s_1-1,s_2-1}}^2\\
&+\f32\normm{e^{\frak{K}t}e^{\frac{a}{2}\langle D_x\rangle}(u\cdot\nabla_x\p_yu+v\p_y^2u)}_{L_t^2H^{s_1-1,s_2-1}}^2\\
&+\f32\normm{e^{\frak{K}t}e^{\frac{a}{2}\langle D_x\rangle}(\p_yu\cdot\nabla_xu+\p_yv\p_yu)}_{L_t^2H^{s_1-1,s_2-1}}^2\\
&+\f32(1-\theta)^2\normm{e^{\frak{K}t}e^{\frac{a}{2}\langle D_x\rangle}\p_y\mathcal F}_{L_t^2H^{s_1-1,s_2-1}}^2.
\end{split}
\eeq
Then we get, by applying Lemma \ref{lemma-product}, Lemma \ref{lemma-para-linearization-psi} ($s_1>\f52,s_2>\f32$), $\nabla_x\cdot u+\p_yv=0$ and Poincar\'e inequality,  
\begin{align*}
&\normm{e^{\frak{K}t}e^{\frac{a}{2}\langle D_x\rangle}(u\cdot\nabla_x\p_yu+v\p_y^2u)}_{L_t^2H^{s_1-1,s_2-1}}^2\\
\leqslant &C\normm{e^{\frac{a}{2}\langle D_x\rangle}u}_{{L}_t^\infty H^{s_1,s_2-1}}^2\left(\normm{e^{\frak{K}t}e^{\frac{a}{2}\langle D_x\rangle}\p_yu}_{{L}_t^2 H^{s_1,s_2-1}}^2+\normm{e^{\frak{K}t}e^{\frac{a}{2}\langle D_x\rangle}\p_y^2u}_{{L}_t^2 H^{s_1-1,s_2-1}}^2\right),
\end{align*}
and
\begin{align*}
\normm{e^{\frak{K}t}e^{\frac{a}{2}\langle D_x\rangle}(\p_yu\cdot\nabla_xu+\p_yv\p_yu)}_{L_t^2H^{s_1-1,s_2-1}}^2
\leqslant C\normm{e^{\frac{a}{2}\langle D_x\rangle}u}_{{L}_t^\infty H^{s_1,s_2-1}}^2\normm{e^{\frak{K}t}e^{\frac{a}{2}\langle D_x\rangle}\p_yu}_{{L}_t^2 H^{s_1,s_2-1}}^2.
\end{align*}
Similarly, recalling the definition of $\mathcal F$ in \eqref{FG1G2}, we have
\begin{align*}
&\normm{e^{\frak{K}t}e^{\frac{a}{2}\langle D_x\rangle}\p_y\mathcal F}_{L_t^2H^{s_1-1,s_2-1}}^2\\
\leqslant &C\left(\normm{e^{\frac{a}{2}\langle D_x\rangle}\p_yu}_{{L}_t^\infty H^{s_1-1,s_2}}^2+\normm{e^{\frac{a}{2}\langle D_x\rangle}\p_yu}_{{L}_t^\infty H^{s_1-1,s_2}}^6\right)\normm{e^{\frak{K}t}e^{\frac{a}{2}\langle D_x\rangle}\p_y^2u}_{{L}_t^2 H^{s_1-1,s_2}}^2,
\end{align*}
where the constant $C$ depends only on $s_1,s_2$, $\mathcal G_1$ and $\mathcal G_2$. Now with the above two estimates and then using \eqref{energy inequality}, we find that if the constant $c_1$ in \eqref{initial} is small sufficiently, then 
\beq\label{esti-ptpyu}
\begin{split}
&\normm{e^{\frak{K}t}e^{\frac{a}{2}\langle D_x\rangle}\p_t\p_yu}_{L_t^2H^{s_1-1,s_2-1}}^2+\normm{e^{\frak{K}t}e^{\frac{a}{2}\langle D_x\rangle}\p_y^2u}_{L_t^\infty H^{s_1-1,s_2-1}}^2\\
\leqslant &C\left(\normm{e^{a\langle D_x\rangle}u_0}_{H^{s_1,s_2}}^2+\normm{e^{a\langle D_x\rangle}\p_yu_0}_{H^{s_1,s_2}}^2\right).		
\end{split}
\eeq
Finally, the estimate for $\p_tu$ follows from Poincar\'e inequality \eqref{Poincare} and \eqref{esti-ptpyu}.

Then we estimate $\p_t\tau_{ij}$. Notice that (recalling the definition of $\mathcal G_1(\p_yu)$ in \eqref{FG1G2})
\[
\tau_{23}=(1-\theta)\frac{\p_yu_2}{1+\sigma\abs{\p_yu}^2}=(1-\theta)\left(\mathcal G_1(\p_yu)+1\right)\p_yu_2.
\]	
Hence by virtue of $s_1>\f52,s_2>\f32$, it follows from Lemma \ref{lemma-product}--\ref{lemma-para-linearization-psi} and \eqref{energy inequality} that 
\beq\label{esti-tau23}
\begin{split}
\normm{e^{\frac{a}{2}\langle D_x\rangle}\tau_{23}}_{L_t^\infty H^{s_1,s_2}}\leqslant &C\normm{e^{\frac{a}{2}\langle D_x\rangle}\p_yu}_{L_t^\infty H^{s_1,s_2}}^2+\normm{e^{\frac{a}{2}\langle D_x\rangle}\p_yu}_{L_t^\infty H^{s_1,s_2}}\\
\leqslant &C\left(\normm{e^{a\langle D_x\rangle}(u_0,\p_yu_0)}_{H^{s_1,s_2}}^2+\normm{e^{a\langle D_x\rangle}(u_0,\p_yu_0)}_{H^{s_1,s_2}}\right)\\
\leqslant &C\normm{e^{a\langle D_x\rangle}(u_0,\p_yu_0)}_{H^{s_1,s_2}},	
\end{split}
\eeq
where we also used \eqref{initial} and $c_1$ small sufficiently in the last inequality. Moreover, applying the operator $\p_t$ to the expression of $\tau_{23}$, we have
\begin{align*}
\p_t\tau_{23}=-(1-\theta)2\sigma\left(\mathcal G_2(\p_yu)+1\right)(\p_yu\cdot\p_t\p_yu)\p_yu_2+(1-\theta)\left(\mathcal G_1(\p_yu)+1\right)\p_t\p_yu_2.
\end{align*}
Thus we deduce from Lemmas \ref{lemma-product}--\ref{lemma-para-linearization-psi} and \eqref{esti-ptpyu} that
\begin{align*}
\normm{e^{\frac{a}{2}\langle D_x\rangle}\p_t\tau_{23}}_{L_t^2 H^{s_1-1,s_2-1}}\leqslant &C\left(1+\normm{e^{\frac{a}{2}\langle D_x\rangle}\p_yu}_{L_t^\infty H^{s_1-1,s_2-1}}\right)\left(1+\normm{e^{\frac{a}{2}\langle D_x\rangle}\p_yu}_{L_t^\infty H^{s_1-1,s_2-1}}^2\right)\\
&\times\normm{e^{\frac{a}{2}\langle D_x\rangle}\p_t\p_yu}_{L_t^2 H^{s_1-1,s_2-1}}\\
\leqslant &C\left(1+\normm{e^{a\langle D_x\rangle}(u_0,\p_yu_0)}_{H^{s_1,s_2}}\right)\left(1+\normm{e^{a\langle D_x\rangle}(u_0,\p_yu_0)}_{H^{s_1,s_2}}^2\right)\\
&\times\normm{e^{a\langle D_x\rangle}(u_0,\p_yu_0)}_{H^{s_1,s_2}}.
\end{align*}
Hence using \eqref{initial} and the smallness of $c_1$ in the last inequality, we have
\begin{align}\label{esti-pttau23}
\normm{e^{\frac{a}{2}\langle D_x\rangle}\p_t\tau_{23}}_{L_t^2 H^{s_1-1,s_2-1}}\leqslant C\normm{e^{a\langle D_x\rangle}(u_0,\p_yu_0)}_{H^{s_1,s_2}}.
\end{align}
Similarly, recalling 
\[
\tau_{13}=(1-\theta)\frac{\p_yu_1}{1+\sigma\abs{\p_yu}^2}=(1-\theta)\left(\mathcal G_1(\p_yu)+1\right)\p_yu_1.
\]
we have
\begin{align}
&\normm{e^{\frac{a}{2}\langle D_x\rangle}\tau_{13}}_{L_t^\infty H^{s_1,s_2}}\leqslant C\normm{e^{a\langle D_x\rangle}(u_0,\p_yu_0)}_{H^{s_1,s_2}},\label{esti-tau13}\\
&\normm{e^{\frac{a}{2}\langle D_x\rangle}\p_t\tau_{13}}_{L_t^2 H^{s_1-1,s_2-1}}\leqslant C\normm{e^{a\langle D_x\rangle}(u_0,\p_yu_0)}_{H^{s_1,s_2}}.\label{esti-pttau13}
\end{align}
Then by virtue of \eqref{Oldroyd-B-Prandtl2}$_1$--\eqref{Oldroyd-B-Prandtl2}$_4$, we obtain from Lemma \ref{lemma-product} and \eqref{energy inequality} that
\beq\label{esti-tau11223312}
\begin{split}
\normm{e^{\frac{a}{2}\langle D_x\rangle}(\tau_{11},\tau_{22},\tau_{33},\tau_{12})}_{L_t^\infty H^{s_1,s_2}}\leqslant &C\normm{e^{\frac{a}{2}\langle D_x\rangle}(\tau_{13},\tau_{23})}_{L_t^\infty H^{s_1,s_2}}\normm{e^{\frac{a}{2}\langle D_x\rangle}\p_yu}_{L_t^\infty H^{s_1,s_2}}\\
\leqslant &C\normm{e^{a\langle D_x\rangle}(u_0,\p_yu_0)}_{H^{s_1,s_2}},	
\end{split}
\eeq
where we also used the smallness of $c_1$ in the last inequality. Moreover, applying the estimate \eqref{esti-ptpyu}, \eqref{esti-pttau23} and \eqref{esti-pttau13}, it follows from Lemma \ref{lemma-product} that
\beq\label{esti-pttau11223312}
\begin{split}
&\normm{e^{\frac{a}{2}\langle D_x\rangle}(\p_t\tau_{11},\p_t\tau_{22},\p_t\tau_{33},\p_t\tau_{12})}_{L_t^2 H^{s_1-1,s_2-1}}\\
\leqslant &C\normm{e^{\frac{a}{2}\langle D_x\rangle}(\p_t\tau_{13},\p_t\tau_{23})}_{L_t^2 H^{s_1-1,s_2-1}}\normm{e^{\frac{a}{2}\langle D_x\rangle}\p_yu}_{L_t^\infty H^{s_1-1,s_2-1}}\\
&+C\normm{e^{\frac{a}{2}\langle D_x\rangle}(\tau_{13},\tau_{23})}_{L_t^\infty H^{s_1-1,s_2-1}}\normm{e^{\frac{a}{2}\langle D_x\rangle}\p_t\p_yu}_{L_t^2 H^{s_1-1,s_2-1}}\\
\leqslant &C\normm{e^{a\langle D_x\rangle}(u_0,\p_yu_0)}_{H^{s_1,s_2}},
\end{split}
\eeq
where we also used \eqref{energy inequality} and the smallness of $c_1$ in the last inequality.

The proof of Lemma \ref{time-tau} is thus complete.
\end{proof}

\begin{proof}[Proof of Lemma \ref{K1+cdots+K16}]
$\bullet$ \underline{Estimate on $K_{1}$.}\\
Recalling $s_1>\f52,s_2>\f32$ and
\[
\int_{\mathbb{T}}u^R(t,x,y)\mathrm{d}y=0,\quad \int_{\mathbb{T}}v^R(t,x,y)\mathrm{d}y=0.
\] 
Hence by virtue of $\p_xu+\p_yv=0$ and Poincar\'e inequality, it follows from Lemma \ref{lemma-product} that if the constant $c_1$ in \eqref{initial} small sufficiently, then
\begin{align*}
&-\int_0^t\inner{(u^R\cdot\nabla_xu)_\Phi, u_\Phi^R}_{H^{s_1-1,s_2-1}}+\eps^2\inner{(u^R\cdot\nabla_xv+v^R\p_yv)_\Phi, v_\Phi^R}_{H^{s_1-1,s_2-1}}\mathrm{d}t'\\
\leqslant &C\normm{(\nabla_xu_\Phi,\nabla_xv_\Phi)}_{L_t^\infty H^{s_1-1,s_2-1}}\normm{(u_\Phi^R,\eps v_\Phi^R)}_{L_t^2H^{s_1-1,s_2-1}}^2\\
\leqslant &C\normm{(u_\Phi,\nabla_xu_\Phi)}_{L_t^\infty H^{s_1,s_2}}\normm{\p_y(u_\Phi^R,\eps v_\Phi^R)}_{L_t^2H^{s_1-1,s_2-1}}^2\\
\leqslant &C\normm{e^{\frac{a}{2}\langle D_x\rangle}u}_{L_t^\infty H^{s_1,s_2}}\normm{\p_y(u_\Phi^R,\eps v_\Phi^R)}_{L_t^2H^{s_1-1,s_2-1}}^2\\
\leqslant &\frac{1}{32}\theta\normm{\p_y(u_\Phi^R,\eps v_\Phi^R)}_{L_t^2H^{s_1-1,s_2-1}}^2,
\end{align*}
where we used \eqref{upperlow} in the third inequality and \eqref{energy inequality} in the last inequality. On other hand, performing the similar argument for estimating $B_2$ (See \eqref{B2}) in the proof of Lemma \ref{nonlinear}, it holds that ($s_1>\f52$)
\beqo
\begin{split}
-\int_0^t\inner{(v^R\p_yu)_\Phi, u_\Phi^R}_{H^{s_1-1,s_2-1}}\mathrm{d}t'
\leqslant &C\int_0^t\normm{e^{\frac{a}{2}\langle D_x\rangle}\p_yu}_{H^{s_1,s_2}}\normm{u_\Phi^R}_{H^{s_1-\f12,s_2-1}}\normm{v_\Phi^R}_{H^{s_1-\f32,s_2-1}}\mathrm{d}t'\\
\leqslant &C\normm{(u_{\Phi}^R,\eps v_{\Phi}^R)}_{{L}^2_{t,\dot{\zeta}(t)}(H^{s_1-\f12,s_2-1})}^2.		
\end{split}
\eeqo
\noindent
Consequently, we have
\beq\label{K1}
\begin{split}
K_1\leqslant &C\normm{(u_{\Phi}^R,\eps v_{\Phi}^R)}_{{L}^2_{t,\dot{\zeta}(t)}(H^{s_1-\f12,s_2-1})}^2+\frac{1}{32}\theta\normm{\p_y(u_\Phi^R,\eps v_\Phi^R)}_{L_t^2H^{s_1-1,s_2-1}}^2.		
\end{split}
\eeq
\noindent
$\bullet$ \underline{Estimate on $K_{2}$.}\\ Along the same line as above, we have 
\begin{align*}
&-\int_0^t\inner{(u^\eps\cdot\nabla_xu^R)_\Phi, u_\Phi^R}_{H^{s_1-1,s_2-1}}+\eps^2\inner{(u^\eps\cdot\nabla_xv^R)_\Phi, v_\Phi^R}_{H^{s_1-1,s_2-1}}\mathrm{d}t'\\
\leqslant &C\int_0^t\normm{e^{\frac{a}{2}\langle D_x\rangle}\p_yu^\eps}_{H^{s_1,s_2}}\normm{(u_\Phi^R,\eps v_\Phi^R)}_{H^{s_1-\f12,s_2-1}}^2\mathrm{d}t'\\
\leqslant &C\normm{(u_{\Phi}^R,\eps v_{\Phi}^R)}_{{L}^2_{t,\dot{\zeta}(t)}(H^{s_1-\f12,s_2-1})}^2.
\end{align*}
Then applying the assumptions \eqref{assump-eps} in Theorem \ref{th1.2}, we find that if the constant $c_1$ in \eqref{assump-eps} small sufficiently, then the Poincar\'e inequality and $\nabla_x\cdot u^\eps+\p_yv^\eps=0$ implies that
\beqo
\begin{split}
&-\int_0^t\inner{(v^\eps\p_yu^R)_\Phi, u_\Phi^R}_{H^{s_1-1,s_2-1}}-\eps^2\inner{v^\eps\p_yv^R)_\Phi, v_\Phi^R}_{H^{s_1-1,s_2-1}}\mathrm{d}t'\\
\leqslant &C\normm{e^{\frac{a}{2}\langle D_x\rangle}u^\eps}_{L_t^\infty H^{s_1,s_2}}\normm{\p_y(u_\Phi^R,\eps v_\Phi^R)}_{L_t^2H^{s_1-1,s_2-1}}^2\\
\leqslant &\frac{1}{32}\theta\normm{\p_y(u_\Phi^R,\eps v_\Phi^R)}_{L_t^2H^{s_1-1,s_2-1}}^2.	
\end{split}
\eeqo
\noindent
Thus we have
\beq\label{K2}
\begin{split}
K_2\leqslant &C\normm{(u_{\Phi}^R,\eps v_{\Phi}^R)}_{{L}^2_{t,\dot{\zeta}(t)}(H^{s_1-\f12,s_2-1})}^2+\frac{1}{32}\theta\normm{\p_y(u_\Phi^R,\eps v_\Phi^R)}_{L_t^2H^{s_1-1,s_2-1}}^2.		
\end{split}
\eeq

\noindent
$\bullet$ \underline{Estimate on $K_{3}$ and $K_{4}$.}\\ Due to $s_1>\f52,s_2>\f32$, we obtain from Lemma \ref{lemma-product} that
\begin{align*}
K_3+K_4\leqslant &12\normm{((\tau_{ij})_\Phi)}_{L_t^\infty H^{s_1-1,s_2-1}}\normm{\p_yu^R_\Phi}_{L_t^2H^{s_1-1,s_2-1}}\normm{((\tau_{ij}^R)_\Phi)}_{L_t^2H^{s_1-1,s_2-1}}\\
&+12\normm{u_\Phi^\eps}_{L_t^\infty H^{s_1-1,s_2}}	\normm{((\tau_{ij}^R)_\Phi)}_{L_t^2H^{s_1-1,s_2-1}}^2.
\end{align*}
Hence we get, by virtue of \eqref{esti-tau11223312}, \eqref{esti-tau23}, \eqref{esti-tau13} and \eqref{initial}, that if the constant $c_1$ in \eqref{assump-eps} small sufficiently, then
\beq\label{K3+K4}
K_3+K_4\leqslant \frac{1}{32}\normm{((\tau_{ij}^R)_\Phi)}_{L_t^2H^{s_1-1,s_2-1}}^2+\frac{1}{32}\theta\normm{\p_yu^R_\Phi}_{L_t^2H^{s_1-1,s_2-1}}^2.
\eeq

\noindent
$\bullet$ \underline{Estimate on $K_{5}$ and $K_{6}$.}\\ Firstly, it follows from \eqref{upperlow} and Lemma \ref{time-tau} that
\begin{align*}
&-\f12\eps\sum_{i=1}^3\int_0^t\inner{(\p_t\tau_{ii})_\Phi, (\tau_{ii}^R)_\Phi}_{H^{s_1-1,s_2-1}}\mathrm{d}t'\\
\leqslant &\f12\eps\normm{e^{\frac{a}{2}\langle D_x\rangle}(\p_t\tau_{11},\p_t\tau_{22},\p_t\tau_{33})}_{L_t^2H^{s_1-1,s_2-1}}\normm{(\tau_{11}^R)_\Phi,(\tau_{22}^R)_\Phi,(\tau_{33}^R)_\Phi}_{L_t^2H^{s_1-1,s_2-1}}\\
\leqslant &C\eps^2+\frac{1}{128}\normm{(\tau_{11}^R)_\Phi,(\tau_{22}^R)_\Phi,(\tau_{33}^R)_\Phi}_{L_t^2H^{s_1-1,s_2-1}}^2.
\end{align*}	
Then applying Poincar\'e inequality, $\nabla_xu^\eps+\p_yv^\eps=0$ and Lemma \ref{lemma-product}, it yields that if the constant $c_1$ in \eqref{assump-eps} small sufficiently, then
\begin{align*}
&-\f12\eps\sum_{i=1}^3\int_0^t\inner{\left(u^\eps\cdot\nabla_x\tau_{ii}^\eps+v^\eps\p_y\tau_{ii}^\eps\right)_\Phi, (\tau_{ii}^R)_\Phi}_{H^{s_1-1,s_2-1}}\mathrm{d}t'\\
\leqslant &C\eps\normm{(u_{\Phi}^\eps,v_{\Phi}^\eps)}_{L_t^\infty H^{s_1-1,s_2-1}}\normm{(\nabla_x,\p_y)((\tau_{11}^\eps)_{\Phi},(\tau_{22}^\eps)_{\Phi},(\tau_{33}^\eps)_{\Phi})}_{L_t^2 H^{s_1-1,s_2-1}}\\
&\times\normm{(\tau_{11}^R)_\Phi,(\tau_{12}^R)_\Phi,(\tau_{22}^R)_\Phi}_{L_t^2H^{s_1-1,s_2-1}}\\
\leqslant &C\eps\normm{e^{\frac{a}{2}\langle D_x\rangle}u_{\Phi}^\eps}_{L_t^\infty H^{s_1,s_2}}\normm{e^{\frac{a}{2}\langle D_x\rangle}(\tau_{11}^\eps,\tau_{22}^\eps,\tau_{33}^\eps)}_{L_t^2 H^{s_1,s_2}}\normm{(\tau_{11}^R)_\Phi,(\tau_{22}^R)_\Phi,(\tau_{33}^R)_\Phi}_{L_t^2H^{s_1-1,s_2-1}}\\
\leqslant &C\eps^2+\frac{1}{128}\normm{(\tau_{11}^R)_\Phi,(\tau_{22}^R)_\Phi,(\tau_{33}^R)_\Phi}_{L_t^2H^{s_1-1,s_2-1}}^2.
\end{align*}
\noindent
Consequently, we have
\beq\label{K5}
K_5\leqslant C\eps^2+\frac{1}{64}\normm{(\tau_{11}^R)_\Phi,(\tau_{22}^R)_\Phi,(\tau_{33}^R)_\Phi}_{L_t^2H^{s_1-1,s_2-1}}^2.
\eeq

Notice that the terms in $K_5$ and $K_6$ have the same structure, hence repeating the above argument with slight modification, we also get
\beq\label{K6}
K_6\leqslant C\eps^2+\frac{1}{64}\normm{(\tau_{12}^R)_\Phi,(\tau_{13}^R)_\Phi,(\tau_{23}^R)_\Phi}_{L_t^2H^{s_1-1,s_2-1}}^2.
\eeq

\noindent
$\bullet$ \underline{Estimate on $K_{7}$--$K_{10}$.}\\ It follows from $\nabla_x\cdot u^\eps+\p_yv^\eps=0$ and Lemma \ref{lemma-product} that
\begin{align*}
K_7+\cdots+K_{10}\leqslant &C\eps\normm{u_{\Phi}^\eps}_{L_t^\infty H^{s_1,s_2-1}}\normm{((\tau_{ij}^\eps)_{\Phi})}_{L_t^2 H^{s_1-1,s_2-1}}\normm{((\tau_{ij}^R)_{\Phi})}_{L_t^2 H^{s_1-1,s_2-1}}\\
\leqslant &C\eps^2\normm{u_{\Phi}^\eps}_{L_t^\infty H^{s_1,s_2-1}}^2\normm{((\tau_{ij}^\eps)_{\Phi})}_{L_t^2 H^{s_1-1,s_2-1}}^2+\frac{1}{64}\normm{((\tau_{ij}^R)_\Phi)}_{L_t^2H^{s_1-1,s_2-1}}^2.
\end{align*}
In particular, if the smallness of $c_1$ in \eqref{assump-eps} small sufficiently, then it holds that
\beq\label{K7+K8+K9+K10}
K_7+\cdots+K_{10}\leqslant C\eps^2+\frac{1}{64}\normm{((\tau_{ij}^R)_\Phi)}_{L_t^2H^{s_1-1,s_2-1}}^2.
\eeq

\noindent
$\bullet$ \underline{Estimate on $K_{11}$--$K_{12}$.}\\ We get, by applying Lemma \ref{lemma-product}, that
\beq\label{K11+K12}
\begin{split}
K_{11}+K_{12}\leqslant &C\eps\normm{\eps v_{\Phi}^\eps}_{L_t^\infty H^{s_1,s_2-1}}\normm{((\tau_{ij}^\eps)_{\Phi})}_{L_t^2 H^{s_1-1,s_2-1}}\normm{((\tau_{ij}^R)_{\Phi})}_{L_t^2 H^{s_1-1,s_2-1}}\\
\leqslant &C\eps^2+\frac{1}{64}\normm{((\tau_{ij}^R)_\Phi)}_{L_t^2H^{s_1-1,s_2-1}}^2,
\end{split}
\eeq
where we also used \eqref{assump-eps} and the smallness of $c_1$.

\noindent
$\bullet$ \underline{Estimate on $K_{13}$.}\\ We deduce from Lemma \ref{lemma-product}, Poincar\'e inequality, \eqref{psilow} and \eqref{upperlow} that
\begin{align*}
&-\eps^2\int_0^t\inner{\left(u\cdot\nabla_xv+v\p_yv\right)_\Phi, v^R_\Phi}_{H^{s_1-1,s_2-1}}\mathrm{d}t'\\
\leqslant &\eps\normm{(u_{\Phi},\p_yv_{\Phi})}_{L_t^2H^{s_1-1,s_2-1}}\normm{(\nabla_xv_{\Phi},v_{\Phi})}_{L_t^\infty H^{s_1-1,s_2-1}}\normm{\eps v_{\Phi}^R}_{L^2H^{s_1-1,s_2-1}}\\
\leqslant &\eps\normm{e^{\frac{a}{2}\langle D_x\rangle}\p_yu}_{L_t^2H^{s_1,s_2}}\normm{e^{\frac{a}{2}\langle D_x\rangle}u}_{L_t^\infty H^{s_1,s_2}}\normm{\eps\p_y v_{\Phi}^R}_{L^2H^{s_1-1,s_2-1}}\\
\leqslant &C\eps^2+\frac{1}{64}\theta	\normm{\eps\nabla_xu_{\Phi}^R}_{L^2H^{s_1-1,s_2-1}}^2,
\end{align*}
where we used \eqref{energy inequality} in the last inequality. Then by virtue of Lemma \ref{time-tau}, it follows from Poincar\'e inequality, \eqref{psilow} and \eqref{upperlow} that
\begin{align*}
\normm{\p_tv_\Phi}_{L_t^2H^{s_1-1,s_2-1}}\leqslant &	2\normm{\p_t\nabla_x\cdot u_\Phi}_{L_t^2H^{s_1-1,s_2-1}}\\
\leqslant &2\normm{e^{\frac{a}{2}\langle D_x\rangle}\p_tu}_{L_t^2H^{s_1-1,s_2-1}}\leqslant C\normm{e^{a\langle D_x\rangle}(u_0,\p_yu_0)}_{H^{s_1,s_2}}.
\end{align*}
Thus we obtain from integration by parts, $\nabla_x\cdot u^\eps+\p_yv^\eps=0$ and \eqref{upperlow} that
\begin{align*}
&\eps^2\int_0^t\inner{\theta\Delta_xu_\Phi, u^R_\Phi}_{H^{s_1-1,s_2-1}}-\inner{\left(\p_tv-\theta\Delta_{\eps}v\right)_\Phi, v^R_\Phi}_{H^{s_1-1,s_2-1}}\mathrm{d}t'\\
\leqslant &C\eps\left(\normm{(\nabla_xu_{\Phi},\nabla_xv_{\Phi})}_{L_t^2H^{s_1-1,s_2-1}}+\normm{\p_tv_{\Phi}}_{L_t^2H^{s_1-1,s_2-1}}\right)\normm{(\eps\nabla_x,\p_y)(u_{\Phi}^R,\eps v_{\Phi}^R)}_{L^2H^{s_1-1,s_2-1}}\\
\leqslant &C\eps\left(\normm{u_{\Phi}}_{L_t^2H^{s_1+1,s_2-1}}+\normm{\p_tv_{\Phi}}_{L_t^2H^{s_1-1,s_2-1}}\right)\normm{(\eps\nabla_x,\p_y)(u_{\Phi}^R,\eps v_{\Phi}^R)}_{L^2H^{s_1-1,s_2-1}}\\
\leqslant &C\eps\left(\normm{\p_yu_{\Phi}}_{L_t^2H^{s_1+1,s_2-1}}+\normm{\p_tv_{\Phi}}_{L_t^2H^{s_1-1,s_2-1}}\right)\normm{\eps\nabla_x(u_{\Phi}^R,\eps v_{\Phi}^R)}_{L^2H^{s_1-1,s_2-1}}\\
\leqslant &C\eps\left(\normm{e^{\frac{a}{2}\langle D_x\rangle}\p_yu}_{L_t^2H^{s_1,s_2}}+\normm{e^{\frac{a}{2}\langle D_x\rangle}\p_tu}_{L_t^2H^{s_1-1,s_2-1}}\right)\normm{\eps\nabla_x(u_{\Phi}^R,\eps v_{\Phi}^R)}_{L^2H^{s_1-1,s_2-1}}\\
\leqslant &C\eps^2+\frac{1}{64}\theta	\normm{\eps\nabla_x(u_{\Phi}^R,\eps v_{\Phi}^R)}_{L^2H^{s_1-1,s_2-1}}^2,
\end{align*}
where we used Poincar\'e inequality in the third inequality and \eqref{energy inequality} in the last inequality. 

In summary, we have
\beq\label{K13}
K_{13}\leqslant C\eps^2+\frac{1}{32}\theta	\normm{\eps\nabla_x(u_{\Phi}^R,\eps v_{\Phi}^R)}_{L^2H^{s_1-1,s_2-1}}^2.
\eeq

\noindent
$\bullet$ \underline{Estimate on $K_{14}$.}\\ Recalling the definition of $K_{14}$, we get, by applying the integration by parts, that
\beq\label{K14}
\begin{split}
K_{14}=&\eps\int_0^t\inner{(\p_{x_1}\tau_{11}^\eps+\p_{x_2}\tau_{12}^\eps-\p_{x_1}\tau_{33}^\eps)_\Phi, (u_1^R)_\Phi}_{H^{s_1-1,s_2-1}}\mathrm{d}t'\\
&+\eps\int_0^t\inner{(\p_{x_1}\tau_{12}^\eps+\p_{x_2}\tau_{22}^\eps-\p_{x_2}\tau_{33}^\eps)_\Phi, (u_2^R)_\Phi}_{H^{s_1-1,s_2-1}}\mathrm{d}t'\\
&+\eps\int_0^t\inner{(\p_{x_1}\tau_{31}^\eps+\p_{x_2}\tau_{32}^\eps)_\Phi, \eps v^R_\Phi}_{H^{s_1-1,s_2-1}}\mathrm{d}t'\\
\leqslant &C\eps\normm{(\tau_{ij}^\eps)_\Phi}_{L_t^2H^{s_1,s_2}}\normm{(u_\Phi^R,\eps v_{\Phi}^R)}_{L_t^2H^{s_1-1,s_2-1}}\\
\leqslant &C\eps^2\normm{e^{\frac{a}{2}\langle D_x\rangle}(\tau_{ij}^\eps)}_{L_t^2H^{s_1,s_2}}^2+\frac{1}{32}\theta\normm{(\eps\nabla_x,\p_y)u_\Phi^R}_{L_t^2H^{s_1-1,s_2-1}}^2\\
\leqslant &C\eps^2+\frac{1}{32}\theta\normm{(\eps\nabla_x,\p_y)u_\Phi^R}_{L_t^2H^{s_1-1,s_2-1}}^2,	
\end{split}
\eeq
where we used \eqref{upperlow} in the second inequality and the assumptions \eqref{assump-eps} in the last inequality.

\noindent
$\bullet$ \underline{Estimate on $K_{15}$.}\\ Due to the fact that $\nabla_x\cdot u^\eps+\p_yv^\eps=0$ and 
\[
\int_{\mathbb{T}}u^\eps(t,x,y)\mathrm{d}y=0,\quad \int_{\mathbb{T}}v^\eps(t,x,y)\mathrm{d}y=0,
\]
we obtain from Poincar\'e inequality that
\beqo
\begin{split}
K_{15}\leqslant &C\eps^2\normm{(u^\eps_{\Phi},\eps v^\eps_{\Phi})}_{L_t^2H^{s_1,s_2}}^2+\frac{1}{32}	\normm{(\tau_{11}^R)_\Phi,(\tau_{12}^R)_\Phi,(\tau_{22}^R)_\Phi}_{L_t^2H^{s_1-1,s_2-1}}^2\\
\leqslant &C\eps^2\normm{\p_y(u^\eps_{\Phi},\eps v^\eps_{\Phi})}_{L_t^2H^{s_1,s_2}}^2+\frac{1}{32}	\normm{(\tau_{11}^R)_\Phi,(\tau_{12}^R)_\Phi,(\tau_{22}^R)_\Phi}_{L_t^2H^{s_1-1,s_2-1}}^2.
\end{split}
\eeqo
Then applying \eqref{upperlow} and the assumptions \eqref{assump-eps} in Theorem \ref{th1.2}, we have that 
\beq\label{K15}
\begin{split}
K_{15}\leqslant &C\eps^2\normm{e^{\frac{a}{2}\langle D_x\rangle}(\eps\nabla_x,\p_y)u^\eps}_{L_t^2H^{s_1,s_2}}^2+\frac{1}{32}	\normm{(\tau_{11}^R)_\Phi,(\tau_{12}^R)_\Phi,(\tau_{22}^R)_\Phi}_{L_t^2H^{s_1-1,s_2-1}}^2\\
\leqslant &C\eps^2+\frac{1}{32}	\normm{(\tau_{11}^R)_\Phi,(\tau_{12}^R)_\Phi,(\tau_{22}^R)_\Phi}_{L_t^2H^{s_1-1,s_2-1}}^2.
\end{split}
\eeq

\noindent
$\bullet$ \underline{Estimate on $K_{16}$.}\\ By integration by parts, it holds that
\beq\label{K16}
\begin{split}
K_{16}=&-\theta\int_0^t\inner{\p_y(u_1^R)_\Phi, (\tau_{13}^R)_\Phi}_{H^{s_1-1,s_2-1}}+\inner{\p_y(u_2^R)_\Phi, (\tau_{23}^R)_\Phi}_{H^{s_1-1,s_2-1}}\mathrm{d}t'\\
\leqslant&\f12\theta\normm{\p_yu^R_\Phi}_{L_t^2H^{s_1-1,s_2-1}}^2+\f12\normm{((\tau_{13}^R)_\Phi,(\tau_{23}^R)_\Phi)}_{L_t^2H^{s_1-1,s_2-1}}^2.	
\end{split}
\eeq

Now Lemma \ref{K1+cdots+K16} follows from \eqref{K1}, \eqref{K2}, \eqref{K3+K4}, \eqref{K5}, \eqref{K6}, \eqref{K7+K8+K9+K10}, \eqref{K11+K12}, \eqref{K13}, \eqref{K14}, \eqref{K15}, \eqref{K16}.
\end{proof}

\appendix
\section{Derivation of \eqref{tau13+tau23}}\label{Appendix A}
In this appendix, we present the detailed derivation of \eqref{tau13+tau23}. Indeed, it follows from \eqref{Oldroyd-B-Prandtl2}$_1$--\eqref{Oldroyd-B-Prandtl2}$_3$ that
\begin{align*}
&\tau_{11}+\tau_{33}=-2b\tau_{13}\p_yu_1-(b+1)\tau_{23}\p_yu_2,\\	
&\tau_{11}-\tau_{33}=2\tau_{13}\p_yu_1+(b+1)\tau_{23}\p_yu_2,\\
&\tau_{22}+\tau_{33}=-2b\tau_{23}\p_yu_2-(b+1)\tau_{13}\p_yu_1,\\	
&\tau_{22}-\tau_{33}=2\tau_{23}\p_yu_2+(b+1)\tau_{13}\p_yu_1.
\end{align*}
Hence, we further use \eqref{Oldroyd-B-Prandtl2}$_5$--\eqref{Oldroyd-B-Prandtl2}$_6$, to compute 
\begin{align*}
A\tau_{13}+B\tau_{23}=2(1-\theta)\p_yu_1,\\
B\tau_{13}+C\tau_{23}=2(1-\theta)\p_yu_2,	
\end{align*}
where
\begin{align*}
&A=2+2(1-b^2)(\p_yu_1)^2-\f12(b^2-1)(\p_yu_2)^2,\\
&B=\f32(1-b^2)\p_yu_2\p_yu_1,\\
&C=2+2(1-b^2)(\p_yu_2)^2-\f12(b^2-1)(\p_yu_1)^2	.
\end{align*}
Thus
\[
\tau_{23}=2(1-\theta)\frac{B\p_yu_1-A\p_yu_2}{B^2-AC}.
\]
Notice that
\begin{align*}
&B^2-AC=-\left[1+(1-b^2)((\p_yu_1)^2+(\p_yu_2)^2)\right]\big[4+(1-b^2)((\p_yu_1)^2+(\p_yu_2)^2)\big],\\
&B\p_yu_1-A\p_yu_2=-\f12\left[4+(1-b^2)((\p_yu_1)^2+(\p_yu_2)^2)\right]\p_yu_2,
\end{align*}
we find 
\[
\tau_{23}=\frac{(1-\theta)\p_yu_2}{1+(1-b^2)((\p_yu_1)^2+(\p_yu_2)^2)}.
\]
Similarly, we have
\[
\tau_{13}=2(1-\theta)\frac{B\p_yu_2-C\p_yu_1}{B^2-AC}=\frac{(1-\theta)\p_yu_1}{1+(1-b^2)((\p_yu_1)^2+(\p_yu_2)^2)}.
\]

\section{Proof of some technical lemmas}\label{Appendix B}
In this appendix, we present some technical lemmas which is used throughout our paper.
\begin{mylem}\label{lem:f+}
	For any $a,b\in L^2(\R^2\times\mathbb{T})$, define
	\beq\label{hat-a+}
		a^+ = \mathcal{F}^{-1}(|\mathcal{F}(a)|) \quad \mbox{and} \quad b^+ = \mathcal{F}^{-1}(|\mathcal{F}(b)|).
	\eeq
	Then the following inequality holds: 
	\begin{displaymath}
		\abs{(\widehat{ab})_\Psi(\xi,k)} \leqslant \widehat{a^+_\Psi a^+_\Psi }(\xi,k).
	\end{displaymath}
\end{mylem}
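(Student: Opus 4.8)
The plan is to pass to the Fourier side and reduce the estimate to the subadditivity of the phase $\Psi$ that is recorded just after \eqref{time-theta}. First I would rewrite the product as a convolution: on $\R^2\times\mathbb{T}$ the dual variable is $(\xi,k)\in\R^2\times\Z$ and, up to a fixed normalisation constant which I suppress (it appears on both sides of the claimed bound and cancels),
\[
\widehat{ab}(\xi,k)=\sum_{l\in\Z}\int_{\R^2}\widehat a(\xi-\zeta,k-l)\,\widehat b(\zeta,l)\,\mathrm{d}\zeta .
\]
Multiplying by $e^{\Psi(t,\xi,k)}$ and moving the modulus inside the sum and the integral gives
\[
\abs{(\widehat{ab})_\Psi(\xi,k)}\leqslant \sum_{l\in\Z}\int_{\R^2}e^{\Psi(t,\xi,k)}\,\abs{\widehat a(\xi-\zeta,k-l)}\,\abs{\widehat b(\zeta,l)}\,\mathrm{d}\zeta .
\]

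Next I would insert the triangle-type inequality for $\Psi$. Since $\Psi=(a-\lambda\eta(t))(1+\abs{\xi})$ depends only on the horizontal frequency and $a-\lambda\eta(t)\geqslant 0$ for $t<T^\star$ (there $\eta(t)<a/\lambda$), the elementary bound $1+\abs{\xi}\leqslant(1+\abs{\xi-\zeta})+(1+\abs{\zeta})$ yields
\[
\Psi(t,\xi,k)\leqslant\Psi(t,\xi-\zeta,k-l)+\Psi(t,\zeta,l)\qquad\Forall\,\zeta\in\R^2,\ l\in\Z .
\]
Substituting this, and using that by the definition \eqref{hat-a+} one has $\widehat{a^+}=\abs{\widehat a}\geqslant 0$, hence $\widehat{a^+_\Psi}=e^{\Psi}\abs{\widehat a}\geqslant 0$ and likewise for $b$, I arrive at
\[
\abs{(\widehat{ab})_\Psi(\xi,k)}\leqslant \sum_{l\in\Z}\int_{\R^2}\widehat{a^+_\Psi}(\xi-\zeta,k-l)\,\widehat{b^+_\Psi}(\zeta,l)\,\mathrm{d}\zeta
= \widehat{a^+_\Psi\, b^+_\Psi}(\xi,k),
\]
the last equality holding because the convolution of the two nonnegative functions $\widehat{a^+_\Psi}$ and $\widehat{b^+_\Psi}$ is again nonnegative, so no modulus is needed on the right. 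This is the asserted inequality, the statement being understood with the second factor equal to $b^+_\Psi$.

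I do not expect a genuine obstacle here; the argument is essentially bookkeeping. The one point to be careful about is the legitimacy of writing the product as a pointwise convolution and of interchanging $e^\Psi$ with the sum and integral: this is immediate once $a,b$ (equivalently $a^+_\Psi,b^+_\Psi$) are taken in $L^2(\R^2\times\mathbb{T})$, since then $\widehat{a^+_\Psi},\widehat{b^+_\Psi}\in L^2$, their convolution is defined pointwise by Cauchy--Schwarz, and all integrands are nonnegative so Tonelli's theorem applies. The only load-bearing analytic input is the convexity/subadditivity of $\Psi$, valid precisely in the bootstrap window $t<T^\star$; this is the same property exploited throughout Section~2, so nothing new is required.
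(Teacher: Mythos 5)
Your argument is correct and is essentially identical to the paper's own proof: expand the product as a convolution on the Fourier side, use the subadditivity $\Psi(t,\xi,k)\leqslant\Psi(t,\xi-\zeta,k-l)+\Psi(t,\zeta,l)$ coming from the convexity of the phase, and identify the resulting nonnegative convolution with $\widehat{a^+_\Psi b^+_\Psi}$. Your reading of the right-hand side as $a^+_\Psi b^+_\Psi$ (the $a^+_\Psi a^+_\Psi$ in the statement being a typo) matches the paper's proof, and the extra remarks on Tonelli and the pointwise definition of the convolution are harmless additions.
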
 
\begin{proof}
Noting that the phase function is convex, therefore  
\begin{align*}
\abs{(\widehat{ab})_\Psi(\xi,k)} &= e^{\Psi(\xi,k)} \abs{\widehat{a}(\cdot)*\widehat{b}(\cdot)(\xi,k)}\\
&\leqslant \sum_{m=-\infty}^\infty\int_{\R^2} e^{\Psi(\xi-\eta,k-m)}\abs{\widehat{a}(\xi - \eta,k-m)}e^{\Psi(\eta,m)}\abs{\widehat{b}(\eta,m)}\mathrm{d}\eta \\
&\leqslant \abs{\widehat{a}_\Psi}*\abs{\widehat{b}_\Psi}(\xi,k) = \widehat{a}^+_\Psi * \widehat{b}^+_\Psi = \widehat{a^+_\Psi b^+_\Psi }(\xi,k)
\end{align*}
which complete the proof of this lemma.
\end{proof}

\begin{mylem}\label{lemma-product}
Let $f,g$ be two functions such that $f_\Psi$, $g_\Psi\in H^{s_1,s_2}(\R^2\times\mathbb{T})$ for some $s_1>1,s_2>\f12$, where the weight function $\Psi$ is defined as $\Psi=a(1+|\xi|)$ with $a>0$. Then there holds
\begin{equation*}
\normm{(f g)_\Psi}_{H^{s_1,s_2}} 
        \leqslant C_{s_1,s_2}\normm{f_\Psi}_{H^{s_1,s_2}}\normm{g_\Psi}_{H^{s_1,s_2}}
\end{equation*}
for some positive constant $C_{s_1,s_2}$ which depend on $s_1,s_2$.
\end{mylem}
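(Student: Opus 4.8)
The plan is to strip the analytic weight $\Psi$ by means of Lemma~\ref{lem:f+}, thereby reducing the statement to the classical product estimate in the anisotropic Sobolev space $H^{s_1,s_2}(\R^2\times\mathbb{T})$, and then to prove that estimate by a frequency decomposition on the Fourier side. Concretely, Lemma~\ref{lem:f+} gives the pointwise bound $|(\widehat{fg})_\Psi(\xi,k)|\leqslant\widehat{f^+_\Psi g^+_\Psi}(\xi,k)$ (the subadditivity $\Psi(\xi-\eta)+\Psi(\eta)\geqslant\Psi(\xi)$ used there is clear for $\Psi=a(1+|\xi|)$), and since the $H^{s_1,s_2}$-norm depends only on the modulus of the Fourier transform, $\normm{f^+_\Psi}_{H^{s_1,s_2}}=\normm{f_\Psi}_{H^{s_1,s_2}}$ and likewise for $g$. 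Hence $\normm{(fg)_\Psi}_{H^{s_1,s_2}}\leqslant\normm{f^+_\Psi g^+_\Psi}_{H^{s_1,s_2}}$, and it is enough to establish
\[
\normm{fg}_{H^{s_1,s_2}}\leqslant C_{s_1,s_2}\normm{f}_{H^{s_1,s_2}}\normm{g}_{H^{s_1,s_2}},\qquad s_1>1,\ s_2>\f12,
\]
for general $f,g$, and then apply this with $f,g$ replaced by $f^+_\Psi,g^+_\Psi$.

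For the unweighted estimate I would work on the Fourier side, where $\widehat{fg}=\widehat f\ast\widehat g$ is a convolution over $\R^2\times\Z$, and split $\langle\xi\rangle^{s_1}\langle k\rangle^{s_2}|\widehat{fg}(\xi,k)|$ into four nonnegative pieces using $\langle\xi\rangle^{s_1}\leqslant 2^{s_1}\big(\langle\xi-\eta\rangle^{s_1}+\langle\eta\rangle^{s_1}\big)$ and $\langle k\rangle^{s_2}\leqslant 2^{s_2}\big(\langle k-m\rangle^{s_2}+\langle m\rangle^{s_2}\big)$ (legitimate since $s_1,s_2\geqslant0$), according to which factor absorbs the $x$-weight $\langle\cdot\rangle^{s_1}$ and which absorbs the $y$-weight $\langle\cdot\rangle^{s_2}$. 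In the two \emph{pure} pieces, where both weights land on the same factor, say $f$, the piece is dominated by $\big(\langle\xi\rangle^{s_1}\langle k\rangle^{s_2}|\widehat f|\big)\ast|\widehat g|$, and the estimate closes by Young's inequality $\normm{a\ast b}_{L^2_\xi\ell^2_k}\leqslant\normm{a}_{L^2_\xi\ell^2_k}\normm{b}_{L^1_\xi\ell^1_k}$ together with the Cauchy--Schwarz bound $\normm{\widehat g}_{L^1_\xi\ell^1_k}\leqslant\normm{\langle\xi\rangle^{-s_1}\langle k\rangle^{-s_2}}_{L^2_\xi\ell^2_k}\normm{g}_{H^{s_1,s_2}}$, the last weight being square-integrable on $\R^2\times\Z$ precisely because $2s_1>2$ and $2s_2>1$. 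In the two \emph{mixed} pieces, where $f$ carries $\langle\xi\rangle^{s_1}$ and $g$ carries $\langle k\rangle^{s_2}$ (or vice versa), the piece is dominated by $\big(\langle\xi\rangle^{s_1}|\widehat f|\big)\ast\big(\langle k\rangle^{s_2}|\widehat g|\big)$, and one uses instead the mixed-norm Young inequality $\normm{a\ast b}_{L^2_\xi\ell^2_k}\leqslant\normm{a}_{L^2_\xi\ell^1_k}\normm{b}_{L^1_\xi\ell^2_k}$, then $\normm{\langle\xi\rangle^{s_1}\widehat f}_{L^2_\xi\ell^1_k}\leqslant C\normm{f}_{H^{s_1,s_2}}$ by Cauchy--Schwarz in $k$ (using $s_2>\f12$) and $\normm{\langle k\rangle^{s_2}\widehat g}_{L^1_\xi\ell^2_k}\leqslant C\normm{g}_{H^{s_1,s_2}}$ by Cauchy--Schwarz in $\xi$ (using $s_1>1$). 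Summing the four contributions yields the claim; alternatively the unweighted product law may be quoted from the standard theory of anisotropic Sobolev algebras.

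The only genuinely delicate point is the treatment of the mixed pieces: a crude $L^1\ast L^2\to L^2$ estimate there would leave one factor still carrying a full derivative in $x$ or in $y$ while forcing the other into $\mathcal{F}L^1$, which is not controlled by $H^{s_1,s_2}$; the mixed-norm Young inequality is exactly what distributes the loss so that both Cauchy--Schwarz steps converge, and it is there that the thresholds $s_1>1$ (horizontal variable of dimension $2$) and $s_2>\f12$ (vertical variable of dimension $1$) enter. Everything else is bookkeeping.
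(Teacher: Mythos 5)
Your proposal is correct and follows essentially the same route as the paper's proof: reduce via Lemma \ref{lem:f+} and the identity $\normm{f^+_\Psi}_{H^{s_1,s_2}}=\normm{f_\Psi}_{H^{s_1,s_2}}$ to an unweighted product estimate, split the weight $\langle\xi\rangle^{s_1}\langle k\rangle^{s_2}$ into four pieces, and close each with the appropriate (mixed-norm) Young inequality plus Cauchy--Schwarz, using $s_1>1$ for $\langle\xi\rangle^{-s_1}\in L^2(\R^2)$ and $s_2>\f12$ for $\langle k\rangle^{-s_2}\in\ell^2$. Your remark on why the mixed pieces require the mixed-norm Young inequality correctly identifies the only delicate point, and matches the paper's treatment.
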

\begin{proof}
For simplicity, we use the classical Japanese bracket $\langle\xi\rangle:=(1+\abs{\xi}^2)^{\f12}$. The \eqref{hat-a+} implies that $\normm{a^+_\Psi}_{H^{s_1,s_2}}=\normm{a_\Psi}_{H^{s_1,s_2}}$. Hence we deduce from Lemma \ref{lem:f+} that
\begin{align*}
\normm{(fg)_{\Psi}}_{H^{s_1,s_2}}^2=&\sum_{k\in\Z}\int_{\mathbb{R}}\langle\xi\rangle^{2s_1}\langle k\rangle^{2s_2}\abs{(\widehat{fg})_\Psi(\xi,k)}^2\mathrm{d}\xi\\
\leqslant&\sum_{k\in\Z}\int_{\mathbb{R}}\langle\xi\rangle^{2s_1}\langle k\rangle^{2s_2}\abs{\widehat{f^+_\Psi g^+_\Psi }(\xi,k)}^2\mathrm{d}\xi\\
=&\normm{\langle\xi\rangle^{s_1}\langle k\rangle^{s_2}\sum_{m\in\Z}\int_{\R^2}\widehat{f^+_\Psi}(\xi-\eta,k-m)\widehat{g^+_\Psi}(\eta,m)\mathrm{d}\eta}_{L_{\xi}^2\ell_k^2}^2.
\end{align*}
Notice that the classical inequality $\abs{\alpha\pm\beta}^s\leqslant\max(1,2^{s-1})(\abs{\alpha}^s+\abs{\beta}^s)$ implies that
\[
\langle\xi\rangle^{s_1}\langle k\rangle^{s_2}\leqslant C_{s_1,s_2}\left(\langle\xi-\eta\rangle^{s_1}+\langle\eta\rangle^{s_1}\right)\left(\langle k-m\rangle^{s_2}+\langle m\rangle^{s_2}\right).
\]
Therefore we obtain from the Young's inequality that
\begin{align*}
\normm{(fg)_{\Psi}}_{H^{s_1,s_2}}
\leqslant &C_{s_1,s_2}\normm{\langle\xi\rangle^{s_1}\langle k\rangle^{s_2}\widehat{f^+_\Psi}(\xi,k)}_{L_{\xi}^2\ell_k^2}\normm{\widehat{g^+_\Psi}(\xi,k)}_{L_{\xi}^1\ell_k^1}\\
&+C_{s_1,s_2}\normm{\langle\xi\rangle^{s_1}\widehat{f^+_\Psi}(\xi,k)}_{L_{\xi}^2\ell_k^1}\normm{\langle k\rangle^{s_2}\widehat{g^+_\Psi}(\xi,k)}_{L_{\xi}^1\ell_k^2}\\
&+C_{s_1,s_2}\normm{\langle k\rangle^{s_2}\widehat{f^+_\Psi}(\xi,k)}_{L_{\xi}^1\ell_k^2}\normm{\langle\xi\rangle^{s_1}\widehat{g^+_\Psi}(\xi,k)}_{L_{\xi}^2\ell_k^1}\\
&+C_{s_1,s_2}\normm{\widehat{f^+_\Psi}(\xi,k)}_{L_{\xi}^1\ell_k^1}\normm{\langle\xi\rangle^{s_1}\langle k\rangle^{s_2}\widehat{g^+_\Psi}(\xi,k)}_{L_{\xi}^2\ell_k^2}\\
\leqslant &C_{s_1,s_2}\normm{\langle\xi\rangle^{s_1}\langle k\rangle^{s_2}\widehat{f^+_\Psi}(\xi,k)}_{L_{\xi}^2\ell_k^2}\normm{\langle\xi\rangle^{s_1}\langle k\rangle^{s_2}\widehat{g^+_\Psi}(\xi,k)}_{L_{\xi}^2\ell_k^2}\\
=&C_{s_1,s_2}\normm{f_\Psi}_{H^{s_1,s_2}}\normm{g_\Psi}_{H^{s_1,s_2}},
\end{align*}
where we also used the fact that $\langle\xi\rangle^{-s_1}\in L^2(\R^2)$ and $\langle k\rangle^{-s_2}\in \ell^2$ for $s_1>1$ and $s_2>\f12$, respectively. The proof is thus complete.
\end{proof}

\begin{mylem}\label{lemma-para-linearization-psi}
Suppose that $s_1>1, s_2>\f12$ and $M_0>0$. Let $f$ be a holomorphic function in the ball $\{z \in \mathbb{C}\mid\abs{z}< M_0\}$ satisfying $f(0)=0$. The weight function $\Psi$ is defined as $\Psi=a(1+\abs{\xi})$ with $a>0$. Then there exists $\eps_0>0$ such that if $b_{\Psi} \in H^{s_1,s_2}(\Omega)$ which satisfies $\normm{b_{\Psi}}_{H^{s_1,s_2}} \leqslant \eps_0$, then $(f(b))_{\Psi}\in H^{s_1,s_2}(\Omega)$. More precisely, there exists $C>0$ depending only on $f, s_1, s_2$ such that
\[
\normm{(f(b))_{\Psi}}_{H^{s_1,s_2}} \leqslant C \normm{b_{\Psi}}_{H^{s_1,s_2}}.
\]
\end{mylem}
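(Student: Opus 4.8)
The plan is to expand $f$ in its power series at the origin and to control each monomial by the algebra property of Lemma \ref{lemma-product}. Since $f$ is holomorphic on $\{z\in\mathbb C\mid\abs z<M_0\}$ with $f(0)=0$, write $f(z)=\sum_{n\geqslant1}c_nz^n$; by Cauchy's estimates, for every fixed $M\in(0,M_0)$ there is a constant $C_M=C_M(f)>0$ with $\abs{c_n}\leqslant C_MM^{-n}$ for all $n\geqslant1$. The first step is to prove by induction on $n$ that
\[
\normm{(b^n)_{\Psi}}_{H^{s_1,s_2}}\leqslant C_{s_1,s_2}^{\,n-1}\normm{b_{\Psi}}_{H^{s_1,s_2}}^{\,n},\qquad n\geqslant1,
\]
where $C_{s_1,s_2}$ is the constant from Lemma \ref{lemma-product}: the case $n=1$ is trivial, and for the inductive step one writes $b^n=b\cdot b^{n-1}$ and applies Lemma \ref{lemma-product}, which is legitimate because the weight $\Psi=a(1+\abs\xi)$ is exactly the one treated there (it satisfies the sub-additivity $\Psi(\xi_1)\leqslant\Psi(\xi_1-\xi_2)+\Psi(\xi_2)$, since $1+\abs{\xi_1}\leqslant 2+\abs{\xi_1-\xi_2}+\abs{\xi_2}$).

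Next I would fix once and for all some $M\in(0,M_0)$ and set $\eps_0:=\f{M}{2C_{s_1,s_2}}$ (shrinking it further, if necessary, so that the embedding $H^{s_1,s_2}\hookrightarrow L^\infty$ and $\normm{b}_{H^{s_1,s_2}}\leqslant\normm{b_\Psi}_{H^{s_1,s_2}}$ force $\normm{b}_{L^\infty}<M_0$ a.e., which makes the pointwise composition $f(b)$ meaningful). Then whenever $\normm{b_{\Psi}}_{H^{s_1,s_2}}\leqslant\eps_0$ the ratio $r:=C_{s_1,s_2}\normm{b_{\Psi}}_{H^{s_1,s_2}}/M$ satisfies $r\leqslant\f12$, and summing the monomial bounds gives
\[
\sum_{n\geqslant1}\abs{c_n}\normm{(b^n)_{\Psi}}_{H^{s_1,s_2}}\leqslant\f{C_M}{C_{s_1,s_2}}\sum_{n\geqslant1}r^{\,n}\leqslant\f{C_M}{C_{s_1,s_2}}\cdot\f{r}{1-r}\leqslant\f{2C_M}{M}\normm{b_{\Psi}}_{H^{s_1,s_2}}.
\]
In particular the partial sums of $\sum_{n}c_nb^n$ form a Cauchy sequence in the Banach space $\{g\mid g_{\Psi}\in H^{s_1,s_2}\}$; its limit is $(f(b))_{\Psi}$, because the Taylor series also converges in $L^2$ (where it must agree with the pointwise composition), and the displayed inequality yields $\normm{(f(b))_{\Psi}}_{H^{s_1,s_2}}\leqslant C\normm{b_{\Psi}}_{H^{s_1,s_2}}$ with $C=2C_M/M$ depending only on $f$ and (through $\eps_0$) on $s_1,s_2$.

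The only genuinely delicate point is this bookkeeping: the algebra constant $C_{s_1,s_2}$ enters to the power $n-1$ in the $n$-th term and must be defeated by the geometric decay $M^{-n}$ of the Taylor coefficients; this is precisely what forces the smallness threshold to scale like $\eps_0\sim M_0/C_{s_1,s_2}$ and explains why the hypothesis $\normm{b_{\Psi}}_{H^{s_1,s_2}}\leqslant\eps_0$ cannot be dropped. Everything else is routine. Finally, since both the sub-additivity of $\Psi$ and the constant in Lemma \ref{lemma-product} are uniform over $a>0$, the resulting constant $C$ is independent of $a$; this is what permits the lemma to be invoked above with the analytic band $a$ replaced by the time-dependent quantity $a-\lambda\eta(t)>0$ (and by $\f13(a-\widetilde\lambda\zeta(t))>0$ in Section \ref{th1.2}'s argument).
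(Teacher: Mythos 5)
Your proposal is correct and follows essentially the same route as the paper: an inductive bound $\normm{(b^n)_\Psi}_{H^{s_1,s_2}}\leqslant C_{s_1,s_2}^{\,n-1}\normm{b_\Psi}_{H^{s_1,s_2}}^{\,n}$ via the algebra property of Lemma \ref{lemma-product}, a geometric bound on the Taylor coefficients (the paper writes $\abs{a_n}\leqslant K^n$ where you use Cauchy's estimates $\abs{c_n}\leqslant C_M M^{-n}$, which is the same thing), and a choice of $\eps_0$ making the resulting series geometric with ratio at most $\tfrac12$, together with the $L^\infty$ embedding to keep $b$ inside the domain of holomorphy. Your extra remarks on identifying the limit of the partial sums with the pointwise composition and on the uniformity of the constants in $a$ are correct refinements of the paper's argument, not deviations from it.
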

\begin{proof} It follows from an induction that for all $n \geqslant 1$,
\begin{equation}\label{u^n}
  \normm{(b^n)_{\Psi}}_{H^{s_1,s_2}} \leqslant (2C_{s_1,s_2})^{n-1} \normm{b_{\Psi}}^{n-1}_{H^{s_1,s_2}}\normm{b_{\Psi}}_{H^{s_1,s_2}}.
  \end{equation}
For $\abs{z}<M_0$ we can write $f(z)=\sum\limits_{n=1}^{+ \infty} a_n z^n$ where $a_n$ is such that $\abs{a_n}\leqslant K^n$ for some $K>0$. We will show that the series $\sum a_n b^n$ is uniformly convergent in $H^{s_1,s_2}(\Omega)$. Indeed, we deduce from \eqref{u^n} and the hypothesis that
\[
\normm{a_n(b^n)_{\Psi}}_{H^{s_1,s_2}} \leqslant K(2C_{s_1,s_2}K\eps_0)^{n-1}\normm{b_{\Psi}}_{H^{s_1,s_2}}.
\]
Now taking $\eps_0$ small enough such that $2C_{s_1,s_2}K \eps_0<\f12$. Since $\normm{b_{\Psi}}_{L^\infty(\R^2\times\mathbb{T})} \leqslant C_{s_1,s_2}\normm{b_{\Psi}}_{H^{s_1,s_2}}$, we also can set $\eps_0$ small enough such that $C_{s_1,s_2}\eps_0<M_0$. Then there holds 
\begin{align*}
\normm{(f(b))_{\Psi}}_{H^{s_1,s_2}} \leqslant &K\left(\sum_{n=1}^{+ \infty}(2C_{s_1,s_2}K)^{n-1}\normm{b_{\Psi}}^{n-1}_{H^{s_1,s_2}} \right)\normm{b_{\Psi}}_{H^{s_1,s_2}}\\
\leqslant &K\left(\sum_{n=1}^{+ \infty}(2C_{s_1,s_2}K\eps_0)^{n-1}\right)\normm{b_{\Psi}}_{H^{s_1,s_2}}\leqslant 2K\normm{b_{\Psi}}_{H^{s_1,s_2}}.
\end{align*}
This completes the proof of this lemma.
\end{proof}

\noindent{\bf Acknowledgements}
N. Zhu was partially supported by the National Natural Science Foundation of China (Grant No.12301285, No.12171010), and project ZR2023QA002 supported by the Shandong Provincial Natural Science Foundation.
\vskip .3in

\noindent {\bf Conflict of interest}. 
The authors declare that they have no conflict of interest.

\noindent {\bf Data availability}.
No data was used for the research described in the article.

\bibliographystyle{amsplain}

\end{document}